\numberwithin{equation}{section}
\newcommand{\cC}{\mathcal{C}}
\newcommand{\cM}{\mathcal{M}}
\renewcommand{\cR}{\mathcal{R}}
\newcommand{\cS}{\mathcal{S}}
\newcommand{\bC}{\mathbb{C}}
\newcommand{\bR}{\mathbb{R}}
\newcommand{\bZ}{\mathbb{Z}}
\newcommand{\bN}{\mathbb{N}}
\newcommand{\fg}{\mathfrak{g}}
\newcommand{\fk}{\mathfrak{k}}
\newcommand{\fp}{\mathfrak{p}}
\newcommand{\fA}{\mathfrak{A}}
\newcommand{\Ad}{\mathrm{Ad}}
\newcommand{\diag}{\mathrm{diag}}
\newtheorem{thm}{Theorem}[section]
\newtheorem{lm}[thm]{Lemma}
\newtheorem{cor}[thm]{Corollary}
\newtheorem{pro}[thm]{Proposition}
\newtheorem{df}[thm]{Definition}
\newcommand{\tPsiz}{  \tilde\Psi^{(0)}  }
\newcommand{\tPsii}{  \tilde\Psi^{(\infty)}  }
\newcommand{\tpsiz}{  \tilde\psi^{(0)}  }
\newcommand{\tpsii}{  \tilde\psi^{(\infty)}  }
\newcommand{\tSz}{  \tilde S^{(0)}  }
\newcommand{\tSi}{ \tilde S^{(\infty)}  }
\newcommand{\tQz}{  \tilde Q^{(0)}  }
\newcommand{\tQi}{  \tilde Q^{(\infty)}  }
\newcommand{\tE}{  \tilde E  }
\newcommand{\tMz}{  \tilde M^{(0)}  }
\newcommand{\Omz}{  \Omega^{(0)}  }
\newcommand{\Omi}{  \Omega^{(\infty)}  }
\renewcommand{\i}{ {\scriptscriptstyle\sqrt{-1}}\, }
\newcommand{\bp}{\begin{pmatrix}}
\newcommand{\ep}{\end{pmatrix}}
\newcommand{\bsp}{\left(\begin{smallmatrix}}
\newcommand{\esp}{\end{smallmatrix}\right)}
\renewcommand{\sl}{\frak s\frak l}
\newcommand{\h}{\frak h}
\renewcommand{\sec}{ \text{Sect}  }
\newcommand{\supersec}{ \widehat{\text{Sect}}  }
\newcommand{\sto}{ \text{Sto}  }
\newcommand{\Phiz}{  \Phi^{(0)}  }
\newcommand{\Phii}{  \Phi^{(\infty)}  }
\newcommand{\Qz}{ Q^{(0)}  }
\newcommand{\simple}{\Gamma}
\newcommand{\n}{\sigma}
\begin{document}

\title[The tt*-Toda equations]{A Lie-theoretic description of the solution space
\\
of the tt*-Toda equations}

\author{Martin A. Guest}
\address{Department of Mathematics, Waseda University, 3-4-1 Okubo, Shinjuku, Tokyo 169-8555, Japan}
\email{martin@waseda.jp}

\author{Nan-Kuo Ho}
\address{Department of Mathematics, National Tsing Hua University, Hsinchu 300, and National Center for Theoretical Sciences, Taipei 106, Taiwan}
\email{nankuo@math.nthu.edu.tw}

\date{\today}

\maketitle

\begin{abstract}

We give a Lie-theoretic explanation for the convex polytope which parametrizes the globally smooth solutions of the topological-antitopological fusion equations of Toda type
(tt$^*$-Toda equations) which were introduced by Cecotti and Vafa.  It is known
from \cite{GL}\cite{GIL1}\cite{M1}\cite{M2}
that these solutions can be parametrized by monodromy data of a certain flat $SL_{n+1}\bR$-connection.  Using Boalch's Lie-theoretic description of  Stokes data, and Steinberg's description of regular conjugacy classes of a linear algebraic group, we express this monodromy data as a convex subset of a Weyl alcove of $SU_{n+1}$.
\end{abstract}

\section{Introduction}\label{intro}

The tt$^*$ equations were introduced 25 years ago by Cecotti and Vafa \cite{CV1} in order to describe deformations of supersymmetric quantum field theories, and interpreted in the mathematical context of integrable systems by Dubrovin \cite{Du93}.  They remain highly visible in current
developments in physics \cite{GMN}\cite{CGV} --- as does the unfortunate fact that mathematical results concerning their solutions are few and far between.

In their original work Cecotti and Vafa discussed at length a special
case of the tt$^*$ equations
which is related to the periodic Toda field equations, namely
\begin{equation}\label{ost}
2\tfrac{\partial^2}{\partial t \partial \bar{t}}w_i=-e^{2w_{i+1}-2w_i}+e^{2w_i-2w_{i-1}}
\end{equation}
where
$w_i:\bC^*\to\bR$ and $w_i$ depends only on
$\vert t\vert$.  Here $i\in\bZ$, and it is assumed that the $w_i$ satisfy
$w_{n-i}+w_i=0$ as well as the periodicity condition
$w_{i+n+1}=w_{i}$.
For $n=1$ these \lq\lq  tt$^*$-Toda equations\rq\rq\ reduce to the radial sinh-Gordon equation (a case of the third Painlev\'e equation). In this case mathematical results were available, and used, in \cite{CV1}.
On the other hand, the solutions of the tt$^*$-Toda equations for other $n$ were obtained relatively recently, in \cite{GL}\cite{GIL1}\cite{M1}\cite{M2}.  It was shown, as Cecotti and Vafa had predicted, that the solutions are characterized by their \lq\lq boundary conditions\rq\rq, more precisely by their asymptotics at
$t=0$ or at $t=\infty$.

Although the tt$^*$-Toda equations are indeed quite special, the existence of these mathematical results makes it possible to probe more deeply, in order to  verify further physical predictions and possibly gain insight into more general cases of the tt*-equations.   In the present article we focus on the Lie-theoretic structure of the space of solutions.
It is of course well known that the Toda equations have a Lie-theoretic origin, and for the above equations the appropriate Lie group is just $SL_{n+1}\bR$.  However the Lie-theoretic structure studied in this article appears in quite a subtle way, from the monodromy data (in particular, Stokes matrices) of a related system of meromorphic o.d.e.
As far as we know, this Lie-theoretic aspect has not been studied before, although (as we shall see in section \ref{localsolutions}) it is related to the \lq\lq unitarity implies regularity\rq\rq\ principle of \cite{CV2}, section 6.

Our results can be summarized as follows.
From \cite{GL}\cite{GIL1}\cite{M1}\cite{M2} it is known that the solutions are parametrized by their asymptotic data at $t=0$, or by certain entries of Stokes matrices, and this data can be identified with a certain convex polytope.
In section \ref{tt*toda} we extend some results of \cite{GIL1} for $n=3,4$ (cases 4a, 5a) to general $n$; this general case was also treated in \cite{M1} in rather different notation.
In section \ref{boalch} we apply the theory developed by Boalch \cite{B_imrn} to express these Stokes matrices in terms of the roots of the complex Lie group $SL_{n+1}\bC$
(Propositions \ref{pro:stokesfactoreven}, \ref{pro:stokesfactorodd}). Then we observe that --- for the tt$^*$-Toda equations --- the Stokes matrices are determined by certain special entries, which correspond to a set of {\em simple} roots
(Propositions \ref{pro:simplerootseven}, \ref{pro:simplerootsodd}).  Next,  in
section \ref{steinberg}, using classical  results of Steinberg, we show that these entries parametrize certain conjugacy classes of elements of $SL_{n+1}\bC$ (Theorem \ref{thm:m-is-r}).  Finally  in
section \ref{localsolutions}
we show that the set of conjugacy classes which actually come from solutions of the tt$^*$-Toda equations can be identified with a set of conjugacy classes of elements of the {\em compact} group $SU_{n+1}$ (Theorem \ref{thm:main}), which in turn can be identified with a convex subset of the Weyl alcove of $SU_{n+1}$ --- in fact the fixed points of a specific involution.
Thus, the convex polytope which arises from solving the tt$^*$-Toda equations has been given a natural Lie-theoretic explanation (Proposition \ref{pro:lietheoretic}).

For readers familiar with the Toda equations, we would like to emphasize that the compact group
$SU_{n+1}$ does not arise directly from the equations themselves.  There is indeed a version of the Toda equations corresponding to $SU_{n+1}$, but this is the equation obtained from (\ref{ost}) by changing the sign of the right hand side.  Its solutions have quite different properties to those of (\ref{ost}).  They have been much-studied by differential geometers as examples of harmonic maps into compact homogeneous spaces (cf.\ \cite{Mc}\cite{BPW}).  In contrast, the group $SU_{n+1}$ appears from (\ref{ost}) in a very indirect way, from part of the monodromy data in the (irregular) Riemann-Hilbert correspondence.

Readers familiar with Lie theory will not be surprised to learn that
most of our results can be generalized to the situation where $SL_{n+1}\bC$ is replaced by any complex simple Lie group, and we shall discuss this in \cite{GH}. We prefer to treat the case $G=SL_{n+1}\bC$ separately here in a down to earth and explicit fashion, using the notation of \cite{GL}\cite{GIL1}.  For the general case it is more convenient to make certain choices (for example, of Stokes data) in a different and more abstract way.

Acknowledgements:  The authors thank Philip Boalch, Ralf Kurbel, and Eckhard Meinrenken  for useful conversations.  The first author was partially supported by JSPS grant (A) 25247005. He is grateful to the National Center for Theoretical Sciences for excellent working conditions and financial support. The second author was partially supported by MOST grant 104-2115-M-007-004.  She is grateful to the Japan Society for the Promotion of Science for the award of a Short-Term Invitation Fellowship.

\section{The tt$^*$-Toda equations}\label{tt*toda}

Let $n\in\bN$.
We consider real valued smooth functions $w_i:\bC^*\to\bR$ with $w_{i+n+1}=w_{i}$
and $w_{n-i}+w_i=0$ for all $i\in\bZ$.
Let
\[
 \alpha = (w_t+\tfrac1\lambda W^T)dt + (-w_{\bar t}+\lambda W)d\bar t,
\]
where
 \[
 w=\diag(w_0,\dots,w_n),\ \
 W=
 \left(
\begin{array}{c|c|c|c}
\vphantom{(w_0)_{(w_0)}^{(w_0)}}
 & \!e^{w_1\!-\!w_0}\! & &
 \\
\hline
  &  & \  \ddots\   & \\
\hline
\vphantom{\ddots}
  & &  &  e^{w_n\!-\!w_{n\!-\!1}}\!\!\!
\\
\hline
\vphantom{(w_0)_{(w_0)}^{(w_0)}}
\!\! e^{w_0\!-\!w_n} \!\!  & &  &  \!
\end{array}
\right),
\]
$w_t$ means $\partial w/\partial t$,
and $W^T$ is the transpose of $W$.  Here $t$ is the coordinate of $\bC$, and $\lambda$ is a nonzero complex parameter.

We regard $d+\alpha$ as a $\lambda$-family of  connections in the trivial complex bundle over $\bC^\ast$ of rank $n+1$.  The condition that $d+\alpha$ is flat for all $\lambda$ is equivalent to
$2w_{t\bar t} = [W^T,W]$, i.e.\
\begin{equation}\label{toda}
2(w_i)_{t\bar t}=-e^{2(w_{i+1}-w_{i})} + e^{2(w_{i}-w_{i-1})}\ \text{for all $i$},
\end{equation}
which is a version of the Toda field equations.

On the other hand, if we impose the condition $w_i=w_i(\vert t\vert)$
then we have a special case of the topological-antitopological fusion or tt$^*$ equations which were introduced by Cecotti and Vafa in \cite{CV1}.
For this reason, Guest-Its-Lin \cite{GIL1}\cite{GIL2} called (\ref{toda}) together with the radial condition
$w_i=w_i(\vert t\vert)$  the {\em tt$^*$-Toda equations.}

\begin{thm}\cite{GL}\cite{GIL1}\cite{GIL2}\cite{M1}\cite{M2}\label{thm:convex}
There is a bijection between the set of solutions on $\bC^*$ to the $tt^*$-Toda equations $w_0,\dots,w_n$ and the bounded convex region
\[
\{(\gamma_0,\dots,\gamma_n)\in\bR^{n+1}\mid
\gamma_{i+1}-\gamma_i\geq -2, ~\gamma_{n-i}+\gamma_i=0\}.
\]
The correspondence is given by $2w_i(t)\sim \gamma_i\log |t|$, as $|t|\to 0$.
\end{thm}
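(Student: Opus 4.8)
The plan is to treat this as an assembly of the analytic results of \cite{GL}\cite{GIL1}\cite{GIL2}\cite{M1}\cite{M2}, organised around the reduction of (\ref{toda}) to a singular radial system and the local analysis of that system at $t=0$. First I would impose the radial condition $w_i=w_i(r)$, $r=|t|$, and write $t=re^{\ii\theta}$. Since $4\partial_t\partial_{\bar t}=\partial_r^2+\tfrac1r\partial_r+\tfrac1{r^2}\partial_\theta^2$ acts on radial functions as $\tfrac1r(r\,\partial_r)_r$, the system (\ref{toda}) becomes the ODE system $\tfrac1{2r}(r(w_i)_r)_r=-e^{2(w_{i+1}-w_i)}+e^{2(w_i-w_{i-1})}$; passing to $x=\log r$ this reads $\tfrac12(w_i)_{xx}=e^{2x}\bigl(-e^{2(w_{i+1}-w_i)}+e^{2(w_i-w_{i-1})}\bigr)$. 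This is the form on which all of the asymptotic analysis is carried out.

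Next I would establish the local structure at $t=0$ (equivalently $x\to-\infty$). Substituting the ansatz $2w_i\sim\gamma_i\log r=\gamma_i x$ into the reduced system, each coordinate $w_i$ is asymptotically linear in $x$, so its second derivative is subordinate, while the two nonlinear terms behave like $e^{(2+\gamma_{i+1}-\gamma_i)x}$ and $e^{(2+\gamma_i-\gamma_{i-1})x}$. These decay as $x\to-\infty$ precisely when $\gamma_{i+1}-\gamma_i>-2$, the borderline value $-2$ being the limiting admissible case; this is exactly the origin of the closed inequality $\gamma_{i+1}-\gamma_i\geq-2$, and shows that $2w_i\sim\gamma_i\log|t|$ is the only leading behaviour compatible with the nonlinearity remaining subordinate near the origin. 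The symmetry hypothesis $w_{n-i}+w_i=0$ transfers immediately to $\gamma_{n-i}+\gamma_i=0$, and the periodicity $w_{i+n+1}=w_i$ forces $\gamma_{i+n+1}=\gamma_i$, so the inequalities wrap cyclically; together these $n+1$ linear inequalities cut out a subset of the affine subspace $\{\gamma_{n-i}+\gamma_i=0\}$. Convexity is automatic (an intersection of half-spaces with an affine subspace), and boundedness follows by chaining the cyclic inequalities $\gamma_{j}\geq\gamma_i-2(j-i)$ against the reflection relations $\gamma_{n-i}=-\gamma_i$, which bound every coordinate from both sides.

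It remains to upgrade this local, necessary picture to the asserted bijection, and this is where the real content — and the main obstacle — lies. One must show that every $\gamma$ in the region is realised by a genuine solution that is smooth on all of $\bC^*$, and conversely that a global solution is uniquely determined by its $\gamma$. The local existence of a solution germ at $t=0$ with prescribed indicial data $\gamma_i$ is a fixed-point argument on the singular system above, but the passage from this germ to a solution staying smooth on all of $\bC^*$ with the correct (decaying) behaviour at $t=\infty$ is the difficult analytic step. Here I would invoke the global existence and classification of \cite{GL}\cite{GIL1}\cite{GIL2} and, in full generality, of \cite{M1}\cite{M2}: via the isomonodromic interpretation of the flat connection $d+\alpha$, a solution corresponds to the monodromy data of the associated meromorphic $\lambda$-connection, the $\gamma_i$ are recovered from the formal data at its singular points, and a reality (\lq\lq unitarity\rq\rq) constraint selects exactly the data giving a globally smooth solution. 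Matching this admissible monodromy set with the polytope above yields the bijection; the only genuinely new point over \cite{GIL1}, where just $n=3,4$ were handled, is to check that the local analysis and the parameter count go through for all $n$, which the uniform form of the decay exponent $2+\gamma_{i+1}-\gamma_i$ renders transparent.
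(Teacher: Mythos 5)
The paper gives no proof of this theorem: it is imported verbatim from \cite{GL}\cite{GIL1}\cite{GIL2}\cite{M1}\cite{M2} (hence the citations in its statement), and the only related argument in the paper, Proposition \ref{pro:characterizationbygamma}, splits the local necessity/sufficiency exactly as you do, deferring the global statement to these same references. Your proposal --- the elementary radial reduction and exponent count producing the inequalities $\gamma_{i+1}-\gamma_i\ge -2$, convexity and boundedness of the region, with global existence/uniqueness and the asymptotics of global solutions delegated to the cited works --- is correct and is essentially the same treatment as the paper's.
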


(The notation $f(t)\sim g(t)$, $|t|\to 0$ means $\lim_{\vert t\vert\to 0} f(t)/g(t)=1$, cf.\ \cite{FIKN06} Chapter 1, section 0.  The asymptotics of solutions are explained in more detail in \cite{GIL1}\cite{GIL2} --- see in particular the introduction to \cite{GIL2}.)

We shall give a Lie-theoretic explanation for the appearance of this convex set.  Although the inequalities $\gamma_{i+1}-\gamma_i\geq -2$ might already suggest a relation with the Weyl alcove of $SU_{n+1}$, the role of this group is not apparent from the definition of $\alpha$.  Indeed, the group-theoretic properties of $\alpha$ arise from the following \lq\lq symmetries\rq\rq, none of which relate to compact groups:

\noindent{\em Cyclic symmetry: }  $\tau(\alpha(\lambda))=\alpha(\omega \lambda)$

\noindent{\em Anti-symmetry: }  $\sigma(\alpha(\lambda))=\alpha(-\lambda)$

\noindent{\em Reality: }  $c(\alpha(\lambda))=\alpha(1/\bar\lambda)$

\noindent Here $\tau, \sigma, c$ are the automorphisms of $\sl_{n+1}\bC$ defined by
\[
\tau(X)=d_{n+1}^{-1} X d_{n+1},\ \ \sigma(X)=-\Delta\, X^T\,  \Delta,\ \ c(X)=\Delta \bar X  \Delta
\]
where
\[
d_{n+1}
=
\bp
1 & & & \\
 & \omega & & \\
 & & \ddots & \\
 & & & \omega^n
\ep,
\quad
\Delta
=
\bp
  & & 1 \\
  & \iddots \, & \\
1 & &
\ep
\]
and $\omega=e^{{2\pi \i}/{(n+1)}}$.
These properties of $\alpha$ are easily verified, and the reality condition shows that $\alpha$ takes values in the Lie algebra
\[
\sl_{n+1}^\Delta\bR = \{ X\in \sl_{n+1}\bC \ \vert\  \Delta \bar X\Delta = X \}
\]
when $\lambda = 1/\bar\lambda$, i.e.\ $\vert\lambda\vert=1$.  As explained in section 2 of \cite{GL}, we have
$\sl_{n+1}^\Delta\bR \cong \sl_{n+1}\bR$, and also $SL_{n+1}^\Delta\bR \cong SL_{n+1}\bR$, where
$SL_{n+1}^\Delta\bR = \{ X\in SL_{n+1}\bC \ \vert\  \Delta \bar X\Delta = X \}$.
In this sense $d+\alpha$ is an $SL_{n+1}\bR $-connection with additional symmetries given by
$\tau$ and $\sigma$.

\section{Stokes Analysis}\label{boalch}

In this article we shall focus on another form of the radial version of (\ref{toda}), related to a $\vert t\vert$-family of  (flat) meromorphic connections $d+\hat\alpha$ on the trivial complex vector bundle of rank $n+1$ over the Riemann sphere $\bC P^1=\bC\cup\infty$, with coordinate $\zeta$ on $\bC$. This is given by
\[
\hat{\alpha}(\zeta)=\left(
-\tfrac{1}{\zeta^2}W^T-\tfrac{1}{\zeta}xw_x+x^2W
\right)
d\zeta
\]
where $x=|t|$.  Here $W$ is as above, with $w_i=w_i(\vert t\vert)$, and $\zeta=\lambda/t$,
but we refer to section 2 of \cite{GIL3} for the precise relation between
$\hat\alpha$ and $\alpha$.  The essential point is that $d+\hat\alpha$ is isomonodromic (i.e.\ its monodromy data at the poles $\zeta=0,\infty$ is independent of $x$) if and only if
$(xw_{x})_x=2x[W^T,W]$, which is the radial version of (\ref{toda}).

The symmetries of $\alpha$ translate into the following symmetries of $\hat{\alpha}$:

\noindent{\em Cyclic symmetry: }  $\tau(\hat\alpha(\zeta))=\hat\alpha(\omega \zeta)$

\noindent{\em Anti-symmetry: }  $\sigma(\hat\alpha(\zeta))=\hat\alpha(-\zeta)$

\noindent{\em Reality: }  $c(\hat\alpha(\zeta))=\hat\alpha(1/(x^2\bar\zeta))$

\noindent and for $\hat\alpha$ there is a further reality property $\overline{\hat{\alpha}(\bar{\zeta})}=\hat{\alpha}(\zeta)$.

As in the case of $\alpha$, these properties show that $\hat\alpha$ takes values in the Lie algebra
$\sl_{n+1}^\Delta\bR \cong \sl_{n+1}\bR$
when $\vert\zeta\vert=1/x$.

The monodromy data of $d+\hat\alpha$ consists of

(i) formal monodromy matrices at the poles $\zeta=0,\infty$

(ii) Stokes matrices (relating flat holomorphic sections on Stokes sectors at each pole)

(iii) connection matrices (relating flat  holomorphic sections at $\zeta=0$ to flat  holomorphic sections at
$\zeta=\infty$).

\noindent It is a fundamental principle (the Riemann-Hilbert correspondence) that local isomonodromic deformations --- i.e.\ local solutions of (\ref{toda}) --- correspond to such monodromy data
(statements which apply to $d+\hat\alpha$ can be found in
\cite{B_adv},\cite{B_duke}).   In this section and the next, however, we just discuss the structure of the monodromy data itself, for which it suffices to consider $t$ fixed.  We return to the consideration of solutions of (\ref{toda}) in section \ref{localsolutions}.

We shall recall the description of the monodromy data from \cite{GIL1}\cite{GIL2}, then express it in the language of \cite{B_inv},\cite{B_imrn}.
It will be convenient to consider separately the cases where $n+1$ is even or odd.  To avoid introducing new notation for the case $n=1$ we assume $n\ge 2$ from now on.

\subsection{Monodromy data for the case $n+1=2m$}
$  $

We express the monodromy data in the (classical) context of the equation $(d-\hat\alpha^T)\Psi=0$, referring to \cite{FIKN06} for general facts about such equations (see also \cite{BJL}).  This notation means that the columns of the matrix function $\Psi$ are a basis of flat sections of the dual connection $d-\hat\alpha^T$. Explicitly, the equation is
\begin{equation}\label{meromorphicsystem}
\Psi_\zeta =  \left( -\tfrac{1}{\zeta^2} W - \tfrac{1}{\zeta} xw_x + x^2W^T \right) \Psi.
\end{equation}
The following presentation of the monodromy data generalizes that for $n+1=4$ in section 4 of \cite{GIL1} and section 5 of \cite{GIL2}.

\noindent{\em Choice of formal solutions.}

To diagonalize the coefficient $-W$ of $1/\zeta^2$, we use the fact that
\[
W=e^{-w\, }\Pi\,  e^w,
\quad
\Pi=
\begin{pmatrix}
  & 1 & & \\
 & & \ \ddots & \\
  & & & 1\\
1   & & &
\end{pmatrix}.
\]
The eigenvalues of $\Pi$ are the $(n+1)$-th roots of unity, and in fact we have
$\Pi= \Omega \, d_{n+1}\,  \Omega^{-1}$, where
$\Omega$ is the Vandermonde matrix
$(\omega^{ij})_{0\le i,j\le n}$.  We obtain
\[
-W = \tilde P_0 \,(-d_{n+1}) \, \tilde P_0^{-1}
\]
where, following section 5 of \cite{GIL2}, we choose $\tilde P_0=e^{-w}\, \Omega\,
\diag(1,\omega^{\frac12},\omega^{1},\omega^{\frac32},\dots,\omega^{\frac{n+1}2})$.

Having made the specific choices $-d_{n+1}$ (ordering of the eigenvalues) and $\tilde P_0$ (diagonalizing matrix), by Proposition 1.1 of \cite{FIKN06} we obtain a unique formal solution of the form
\[
\tPsiz_f(\zeta) = \tilde P_0\left( I + \sum_{k\ge 1} \tpsiz_k \zeta^k \right) e^{\frac1\zeta  d_{n+1}}.
\]
Similarly, we have $W^T=e^{w}\,\Pi^T\, e^{-w}$, and $\Pi^T= \Omega^{-1} \, d_{n+1}\,  \Omega$.  We obtain
\[
x^2W^T = \tilde P_\infty \,(x^2 d_{n+1}) \, \tilde P_\infty^{-1}
\]
by choosing $\tilde P_\infty=e^{w}\, \Omega^{-1}\,
\diag(1,\omega^{-\frac12},\omega^{-1},\omega^{-\frac32},\dots,\omega^{-\frac{n+1}2})$.
Then there is unique formal solution at $\zeta=\infty$ (i.e.\ $1/\zeta=0$)  of the form
\[
\tPsii_f(\zeta) = \tilde P_\infty\left( I + \sum_{k\ge 1} \tpsii_k \zeta^{-k} \right) e^{x^2 \zeta  d_{n+1}}.
\]

\noindent{\em Stokes matrices.}

Let us take initial Stokes sectors at zero and infinity to be
\[
\Omz_1=\{ \zeta\in\bC^\ast \ \vert \ -\tfrac{\pi}{2}-\tfrac{\pi}{n+1}<\text{arg}\zeta <\tfrac\pi2\},
\quad
\Omi_1=\{ \zeta\in\bC^\ast \ \vert\  -\tfrac\pi2<\text{arg}\zeta < \tfrac{\pi}{2}+\tfrac{\pi}{n+1}\}.
\]
We regard these as subsets of
the universal covering surface
$\tilde\bC^\ast$, and define further Stokes sectors in $\tilde\bC^\ast$ by
\[
(\Omz_{k+\frac{1}{n+1}}=)\
\Omz_{k\frac{1}{n+1}}= e^{-\frac\pi{n+1}\i} \Omz_k,
\quad
\Omi_{k\frac1{n+1}}= e^{\frac\pi{n+1}\i} \Omi_k
\]
for any $k\in\tfrac1{n+1}\bZ$.
Then
(see, for example, Theorem 1.4 of \cite{FIKN06})
there are unique holomorphic solutions
$\tPsiz_k,\tPsii_k$ on $\Omz_k, \Omi_k$
such that
$\tPsiz_k\sim\tPsiz_f$ as $\zeta\to0$ in $\Omz_k$ and
$\tPsii_k\sim\tPsii_f$ as $\zeta\to\infty$ in $\Omi_k$.
We use the same notation $\tPsiz_k,\tPsii_k$ for the analytic continuations of these solutions to the whole of $\tilde\bC^\ast$.

The symmetries of $\hat\alpha$ lead to the following symmetries of the $\tPsiz_k$ (see section 2 of \cite{GIL2} for these formulae, and Appendix A of
\cite{GIL2} for a list of commonly used notation):
\begin{itemize}
\item
$d_{n+1}^{-1}\tPsiz_{k-\scriptstyle\frac2{n+1}}(\omega\zeta)\hat\Pi^{-1}\omega^{-\frac12}=\tPsiz_k(\zeta),
\
\hat\Pi=
\left(
\begin{smallmatrix}
  &  1  & & &\\
  &   & \ \ddots & &\\
  & & &  & 1\\
-1    & & &
\end{smallmatrix}
\right)
$
\item
$(n+1)\Delta \left(\tPsiz_{k-1}(e^{\i\pi}\zeta)^{T}\right)^{-1}=\tPsiz_k(\zeta)$
\item
$(n+1)\Delta \tPsii_{k}(\frac 1{x^2\zeta}) = \tPsiz_{k}(\zeta)$
\item
$\overline{  \tPsiz_{\scriptstyle\frac{2n+1}{n+1}-k}(\bar\zeta)  } \tilde C = \tPsiz_k(\zeta),
\
\tilde C=
\left(
\begin{smallmatrix}
1 & & & &\\
 & & & & -1\\
 & & & \iddots &\\
 & -1 & & &
\end{smallmatrix}
\right)$.
\end{itemize}

On overlapping sectors these solutions differ by constant matrices (independent of $\zeta$ and $x$).  We define specific matrices $\tQz_k, \tQi_k$ by
\[
\tPsiz_{k\scriptstyle\frac1{n+1}} = \tPsiz_k \tQz_k,
\quad
\tPsii_{k\scriptstyle\frac1{n+1}} = \tPsii_k \tQi_k
\]
and $\tSz_k, \tSi_k$ by
\[
\tPsiz_{k+1}=\tPsiz_k \tSz_k,
\quad
\tPsii_{k+1}=\tPsii_k \tSi_k.
\]
It follows that
\[
\tSz_k=\tQz_{k}\tQz_{k\scriptstyle\frac1{n+1}}\tQz_{k\scriptstyle\frac2{n+1}}\dots
\tQz_{k\scriptstyle\frac{n}{n+1}},
\quad
\tSi_k=\tQi_{k}\tQi_{k\scriptstyle\frac1{n+1}}\tQi_{k\scriptstyle\frac2{n+1}}\dots
\tQi_{k\scriptstyle\frac{n}{n+1}}.
\]
From the symmetries of the $\tPsiz_k$ we obtain:
\begin{itemize}
\item
$\tQz_{k+\scriptstyle\frac2{n+1}} =
\hat\Pi\  \tQz_k \  \hat\Pi^{-1}
$
\item
$\tQz_{k+1} = \left(  (\tQz_k)^{T}\right)^{-1}$
$\vphantom{\dfrac12}$
\item
$\tQz_{k}=\tQi_{k}$
$\vphantom{\dfrac12}$
\item
$\tQz_k= \tilde C \
\left( \, \overline{\tQz_{ {\scriptstyle\frac{2n}{n+1} - k}  }}\, \right)^{-1} \ \tilde C$.
\end{itemize}
In particular, the first (cyclic) symmetry shows that all $\tQz_k$ are determined by $\tQz_1,\tQz_{1\scriptstyle\frac1{n+1}}$. The third shows that it suffices to consider the pole $\zeta=0$.

We can say more about the shape of the matrices $\tQz_k$:

\begin{lm}\label{lm:shapeofQeven}
When $n+1=2m$, the $(i,j)$ entry of the matrix $\tQz_{1\scriptstyle\frac\ell{n+1}}$ satisfies
\[
\left(
\tQz_{1\scriptstyle\frac\ell{n+1}}
\right)_{i,j}=
\begin{cases}
1 \quad \text{if}\ i=j
\\
0 \quad
\text{if $i\ne j$, and
$\arg(\omega^i-\omega^j) \not\equiv
(n-\ell)\tfrac{\pi}{n+1}$ mod $2\pi$
}
\end{cases}
\]
\end{lm}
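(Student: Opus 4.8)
The plan is to read off the shape of $\tQz_{1\frac{\ell}{n+1}}$ from the standard description of Stokes factors for a rank-two irregular singularity (\cite{FIKN06}, Theorem~1.4; see also \cite{BJL}) applied to the formal exponents $\omega^j/\zeta$ of (\ref{meromorphicsystem}), and then to match the combinatorics of these exponents with the index $\ell$. I would split the argument into a soft part (the qualitative shape of a single Stokes factor) and a computational part (which entries are switched on at step $\ell$).

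For the soft part, note that the anti-Stokes rays are spaced by $\frac{\pi}{n+1}$, while $\Omz_{1\frac{\ell}{n+1}}$ and $\Omz_{1\frac{\ell+1}{n+1}}$ differ by a clockwise rotation through exactly $\frac{\pi}{n+1}$, so their overlap contains all but one ray and $\tQz_{1\frac{\ell}{n+1}}$ is a \emph{single} Stokes factor. Both $\tPsiz_{1\frac{\ell}{n+1}}$ and $\tPsiz_{1\frac{\ell+1}{n+1}}$ are asymptotic to the same formal solution $\tPsiz_f=\tilde P_0(I+O(\zeta))e^{\frac1\zeta d_{n+1}}$ on the overlap. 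Writing $\tQz_{1\frac{\ell}{n+1}}=I+N$ and conjugating the identity $\tPsiz_{1\frac{\ell+1}{n+1}}=\tPsiz_{1\frac{\ell}{n+1}}(I+N)$ by $e^{\frac1\zeta d_{n+1}}$, the $(i,j)$ entry of $N$ carries the weight $e^{(\omega^i-\omega^j)/\zeta}$; for the two solutions to share the asymptotics $\tPsiz_f$ this weight must be recessive on the overlap. The diagonal weight is $e^0=1$, never recessive, forcing $N_{ii}=0$ and hence diagonal entries equal to $1$; and $N_{ij}$ can be nonzero only for the pairs whose exponential switches dominance precisely on the ray leaving the overlap. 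This yields the dichotomy in the statement, leaving only the identification of which pairs those are.

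For the computational part, I would first determine the chord directions. Using $\omega^i-\omega^j=\omega^j(\omega^{i-j}-1)$ together with $e^{\i\phi}-1=2\sin(\phi/2)\,e^{\i(\phi/2+\pi/2)}$, one gets
\[
\arg(\omega^i-\omega^j)\equiv\frac{\pi(i+j)}{n+1}+\frac{\pi}{2}\pmod{2\pi},
\]
with a correction by $\pi$ governed by the sign of $\sin\frac{\pi(i-j)}{n+1}$, i.e.\ by whether $i>j$ or $i<j$. This is where the evenness $n+1=2m$ enters decisively: since $\frac{\pi}{2}=m\,\frac{\pi}{n+1}$, the directions all land on the lattice $\frac{k\pi}{n+1}$, namely $\arg(\omega^i-\omega^j)\equiv\frac{(i+j+m)\pi}{n+1}\pmod{\pi}$. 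The anti-Stokes ray of the pair $(i,j)$ sits at $\arg\zeta=\arg(\omega^i-\omega^j)\pm\frac{\pi}{2}$. Tracking, as $\ell$ increases, which ray exits the overlap of consecutive sectors --- starting from $\Omz_1$ and rotating by $-\frac{\pi}{n+1}$ per step --- I expect the ray crossed at step $\ell$ to correspond to the chord direction $(n-\ell)\frac{\pi}{n+1}$, so that the congruence $i+j+m\equiv n-\ell\pmod{n+1}$ singles out the active entries; this is exactly $\arg(\omega^i-\omega^j)\equiv(n-\ell)\frac{\pi}{n+1}$.

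The main obstacle is the precise bookkeeping in this last step: pinning down the base direction and orientation so that step $\ell$ is attached to $(n-\ell)\frac{\pi}{n+1}$ rather than some shifted or reflected index, and lifting the congruence from mod $\pi$ to mod $2\pi$ by correctly coupling the sign of $\sin\frac{\pi(i-j)}{n+1}$ (equivalently the ordering of $i$ and $j$) to the choice of which of the two rays $\arg(\omega^i-\omega^j)\pm\frac{\pi}{2}$ is the one crossed. The soft part is routine; essentially the entire content of the lemma lies in making this indexing come out exactly as stated, and in the fact that $n+1=2m$ forces the chord directions onto the lattice $\frac{k\pi}{n+1}$ (which is precisely what fails for $n+1$ odd, necessitating the separate treatment in the companion lemma).
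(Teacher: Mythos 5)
Your route is the same as the paper's: the paper proves this lemma simply by referring to Lemma 4.4 of \cite{GIL1}, which is exactly the recessive-exponential argument of your ``soft part'', and that part of your proposal is correct (including the conclusion that the diagonal entries equal $1$). The problem is that the step you leave as ``I expect \dots'' is the entire content of the lemma, and the bookkeeping apparatus you set up for it (chord directions on the lattice $\tfrac{\pi}{n+1}\bZ$, the congruence $i+j+m\equiv n-\ell$, the mod-$\pi$ to mod-$2\pi$ lifting via the sign of $\sin\tfrac{\pi(i-j)}{n+1}$) is both unfinished and unnecessary for this statement.

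The gap closes in two lines once you compute the overlap explicitly. From $\Omz_1=\{-\tfrac\pi2-\tfrac{\pi}{n+1}<\arg\zeta<\tfrac\pi2\}$ and $\Omz_{1\scriptstyle\frac{\ell}{n+1}}=e^{-\frac{\ell\pi}{n+1}\i}\,\Omz_1$ one gets
\[
\Omz_{1\scriptstyle\frac{\ell}{n+1}}\cap\ \Omz_{1\scriptstyle\frac{\ell+1}{n+1}}
=\bigl\{\zeta \st \bigl\vert \arg\zeta+\tfrac{(\ell+1)\pi}{n+1}\bigr\vert<\tfrac\pi2\bigr\},
\]
an open sector of width exactly $\pi$ centered on the ray $\arg\zeta=-\tfrac{(\ell+1)\pi}{n+1}$. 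For $a=\omega^i-\omega^j\ne 0$, the function $e^{a/\zeta}$ tends to $0$ as $\zeta\to0$ along $\arg\zeta=\theta$ if and only if $\cos(\arg a-\theta)<0$, i.e.\ exactly on the open width-$\pi$ sector centered at $\arg\zeta\equiv\arg a+\pi$; on any other ray it is bounded below, so a nonzero constant entry $N_{ij}$ would contradict the flatness of $e^{\frac1\zeta d_{n+1}}N e^{-\frac1\zeta d_{n+1}}$ there. Since an open arc of width $\pi$ is contained in another open arc of width $\pi$ only if the two coincide, $N_{ij}\ne0$ forces the centers to agree: $\arg(\omega^i-\omega^j)+\pi\equiv-\tfrac{(\ell+1)\pi}{n+1}$, i.e.\ $\arg(\omega^i-\omega^j)\equiv\pi-\tfrac{(\ell+1)\pi}{n+1}=(n-\ell)\tfrac{\pi}{n+1}$ mod $2\pi$. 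Note this is intrinsically a mod-$2\pi$ statement, so the lifting issue you worry about never arises; and the congruence $i+j+m\equiv n-\ell$ determining \emph{which} pairs $(i,j)$ satisfy it is not part of this lemma at all --- it is the content of Proposition \ref{pro:stokesfactoreven} and Table \ref{t1even}. (Your observation that evenness of $n+1$ is what places the chord directions on the lattice $\tfrac{\pi}{n+1}\bZ$, and that this fails for odd $n+1$, is correct and is indeed why the odd case requires the separate Lemma \ref{lm:shapeofQodd}.)
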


\noindent This is proved in exactly the same way as Lemma 4.4 of \cite{GIL1} (the case $n+1=4$).  That lemma applies to the matrices $Q_k$ (defined in section 4 of \cite{GIL1}), but in fact those matrices have the same shape as the $\tQz_k$.
The remaining entries depend on $w_0,\dots,w_n$ (i.e.\ the coefficients of $\hat\alpha$), and will be given later (section \ref{localsolutions}).  The reason for using $\tQz_k$ rather than $Q_k$ is that all entries of $\tQz_k$ are real numbers (see Corollary \ref{cor:real}).

\noindent{\em Monodromy matrices.}

As in the proof of Lemma 4.2 of \cite{GIL1}, the triviality of the formal monodromy gives
\[
\tPsiz_1(e^{2\pi\i}\zeta)=\tPsiz_1(\zeta) \tSz_1\tSz_2,
\]
i.e.\ the monodromy of $\tPsiz_1$
(parallel translation around the unit circle $\vert\zeta\vert=1$ in the positive direction)
is $\tSz_1\tSz_2$.  From the cyclic symmetry property we obtain
\[
\tSz_1\tSz_2
=-(\tQz_1\tQz_{1\scriptstyle\frac1{n+1}}\hat\Pi)^{n+1}
=(\tQz_1\tQz_{1\scriptstyle\frac1{n+1}}\hat\Pi \,\omega^{\frac12})^{n+1}.
\]

\noindent{\em Connection matrices.}

Solutions at zero and infinity are related in a similar way, by \lq\lq connection matrices\rq\rq\ $\tE_k$, namely
\[
\tPsii_k=\tPsiz_k\tE_k.
\]
The final constituents of the monodromy data  are these connection matrices $\tilde E_k$, although we shall not need explicit formulae for them in this article.  They are all determined by $\tilde E_1$.

As remarked earlier, there is a correspondence between local isomonodromic deformations (or solutions of the tt*-Toda equations) and the monodromy data $\tilde E_1,\tQz_1,\tQz_{1\scriptstyle\frac1{n+1}}$.  (In general one would have formal monodromy matrices as well, but these are trivial in our situation.)
It turns out that all the global solutions on $\bC^\ast$ have the same value of $\tilde E_1$ (see Theorem 5.5 of \cite{GIL2}). Therefore, since these global solutions are our main focus,
only the data $\tQz_1,\tQz_{1\scriptstyle\frac1{n+1}}$ will be needed, and, as we shall see in the next section, giving these is equivalent to giving the single matrix $\tQz_1\tQz_{1\scriptstyle\frac1{n+1}}\hat\Pi$.  With this in mind, we introduce the following notation:

\begin{df}\label{def:qqpi}
When $n+1=2m$, $\tMz
\overset{\scriptstyle \text{\em def} }=
\tQz_1\tQz_{1\scriptstyle\frac1{n+1}}\hat\Pi$.
\end{df}

We emphasize that $\tMz$ is constructed from a particular meromorphic connection $\hat\alpha$, i.e.\ from a particular solution of the tt*-Toda equations, even though this dependence is not indicated explicitly in the notation.  Later we shall investigate which matrices $\tMz$ actually arise from solutions.

As we have just seen, the $(n+1)$-th power of $\tMz\omega^{\frac12}$ is equal to the monodromy of
$\tPsiz_1$.  Geometrically, this can be explained as follows.  First, the cyclic symmetry property says that the parallel translation of a (holomorphic)  flat section $\Psi$ along the unit circle from $\zeta$ to
$\omega\zeta$, followed by $d_{n+1}^{-1}$, is a well-defined automorphism $M$ of the fibre at $\zeta$,  which satisfies $d_{n+1}^{-1}\Psi(\omega\zeta)=\Psi(\zeta)M$.   In fact, the formula
\[
d_{n+1}^{-1}\tPsiz_1(\omega\zeta)
=\tPsiz_1(\zeta)
\
\tQz_1 \tQz_{1\scriptstyle\frac1{n+1}}\hat\Pi \,\omega^{\frac12}
=
\tPsiz_1(\zeta)
\
\tMz\omega^{\frac12}
\]
shows that, in the case $\Psi=\tPsiz_1$, we have
$M=\tMz\omega^{\frac12}$.  For this reason, we regard $\tMz$ as the fundamental monodromy data for the connection $\hat\alpha$.

Next, we will review the framework of Boalch \cite{B_imrn}, which describes the Stokes matrices of meromorphic $G$-connections efficiently in Lie-theoretic terms, for any connected complex reductive group $G$.  Even though we consider only the group $G=SL_{n+1}\bC$ here, we shall find this notation useful.

Let us choose the standard Cartan subalgebra
\[
\h_{n+1}=
\{
\diag(h_0,\dots,h_n)\in\sl_{n+1}\bC
\ \vert\
h_i\in\bC\}
\]
of $\sl_{n+1}\bC$.  For $i=0,\dots,n$ we define
$v_i\in\h_{n+1}^\ast$ by
$v_i(\diag(h_0,\dots,h_n))=h_i$.
Then the roots for $\sl_{n+1}\bC$ with respect to $\h_{n+1}$
are $\alpha_{i,j}=v_i-v_j$ with $0\leq i \neq j \leq n$.  We denote
the set of all roots by $\Delta_{n+1}$.  We have the root space decomposition
\[
\sl_{n+1}\bC=\h_{n+1}\ \oplus\ \left(\oplus_{\alpha\in\Delta_{n+1}} \bC E_{i,j}\right)
\]
where $E_{i,j}$ is the matrix with $1$ in the $(i,j)$ entry and all other entries zero.

The Stokes data of \cite{B_imrn} is based on a choice of ordering of the singular directions (anti-Stokes directions) $d_1,d_2,\dots,d_{2l}$ associated to a pole of a meromorphic connection.
In the case of a pole of order $2$, with leading term $\tfrac1{\zeta^2}X$, the singular directions are simply the directions of (or rays through) the complex numbers $\alpha(X)$, where
$\alpha\in\Delta_{n+1}$. Here it is assumed that all $\alpha(X)$ are nonzero.
Given a singular direction $d$, the set of roots supported by $d$, i.e.\ for which $\alpha(X)$ has direction $d$, is denoted by $\cR(d)$.

In the terminology of \cite{B_imrn}, the $i$-th sector $\sec_i$ is the sector between $d_i,d_{i+1}$.  The $i$-th supersector $\supersec_i$ is the larger sector bounded by $\arg d_i-\tfrac\pi2,\arg d_{i+1}+\tfrac\pi2$. On $\supersec_i$ there is a unique holomorphic solution $\Phi_i$ of (\ref{meromorphicsystem}) asymptotic to a chosen formal solution $\Phi_f$ as $\zeta\to 0$ in that sector.  The {\em Stokes factors} $K_2,\dots,K_{2l}$ are defined by  $\Phi_i=\Phi_{i+1} K_{i+1}$ (and $K_1$ is defined by $\Phi_{2l-1} M_0^{-1}=\Phi_{1} K_{1}$ where $M_0$ is the formal monodromy).  Thus, each singular direction $d_i$ has an associated Stokes factor $K_i$.  One has the following result concerning the shape of $K_i$:

\begin{lm}\label{lm:shapeofQevenBOALCH} \cite{B_imrn} Let $U_\alpha$ be the (unipotent) subgroup of $SL_{n+1}\bC$ corresponding to the root $\alpha$, i.e.\
$U_{\alpha_{i,j}}=\exp(\bC E_{i,j})$.  Then
$K_i$ belongs to the {\em group of Stokes factors} $\sto_{d_i} =
\overset{\scriptstyle }{\underset{\alpha\in\cR(d_i)}\Pi} U_\alpha$ (this is independent of the ordering of the factors).
\end{lm}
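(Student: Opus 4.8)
The plan is to follow Boalch's argument in \cite{B_imrn}, splitting the assertion into two parts: a purely group-theoretic claim, that $\sto_{d_i}=\prod_{\alpha\in\cR(d_i)}U_\alpha$ is a unipotent subgroup whose defining product does not depend on the ordering of its factors, and an analytic claim, that the Stokes factor $K_i$ actually lands in this subgroup.

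First I would dispose of the group-theoretic part. The key observation is that the support set $\cR(d_i)$ is \emph{closed} in the sense that if $\alpha,\beta\in\cR(d_i)$ and $\alpha+\beta\in\Delta_{n+1}$, then $\alpha+\beta\in\cR(d_i)$. Indeed, by definition evaluation at $X$ sends each $\alpha\in\cR(d_i)$ to a positive real multiple of the unit vector in direction $d_i$, so $(\alpha+\beta)(X)=\alpha(X)+\beta(X)$ is again a positive multiple pointing in direction $d_i$. By the Chevalley commutator relations the commutator $[U_\alpha,U_\beta]$ lies in the product of the $U_{p\alpha+q\beta}$ over positive integers $p,q$ with $p\alpha+q\beta\in\Delta_{n+1}$, and the same ray argument shows each such $p\alpha+q\beta$ lies in $\cR(d_i)$. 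Hence $\prod_{\alpha\in\cR(d_i)}U_\alpha$ is closed under multiplication, is a unipotent subgroup of $SL_{n+1}\bC$, and any reordering of the factors sweeps out the same subgroup, giving both the group property and order-independence.

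Next I would establish that $K_i\in\sto_{d_i}$. Write the chosen formal solution as $\Phi_f=\hat F(\zeta)\,e^{Q(\zeta)}$, where $Q$ is the diagonal exponent (leading term proportional to $\tfrac1\zeta$, determined by the residue-$2$ coefficient $X$) and $\hat F$ is a formal power series with $\hat F(0)$ invertible. On the overlapping supersectors $\supersec_i$ and $\supersec_{i+1}$ the canonical solutions $\Phi_i,\Phi_{i+1}$ are both asymptotic to $\Phi_f$; setting $\hat F_j=\Phi_j e^{-Q}$, the defining relation $\Phi_i=\Phi_{i+1}K_{i+1}$ becomes
\[
e^{Q}\,K_{i+1}\,e^{-Q}=\hat F_{i+1}^{-1}\hat F_i ,
\]
whose right-hand side is asymptotic to $I$ as $\zeta\to0$ on the overlap. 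Conjugation by $e^{Q}$ fixes the Cartan part of $K_{i+1}$ and rescales the $E_\alpha$-component by $e^{\alpha(Q)}$; since the right-hand side is unipotent-valued with limit $I$, the diagonal part of $K_{i+1}$ must be trivial, so $K_{i+1}$ is unipotent. Boundedness then forces every root $\alpha$ occurring in $K_{i+1}$ to have $e^{\alpha(Q)}$ non-growing on the overlap, and the localization of the Stokes phenomenon at a single singular direction (which is exactly the direction of $\alpha(X)$) pins the surviving roots down to $\cR(d_{i+1})$. Therefore $K_{i+1}\in\sto_{d_{i+1}}$, which is the assertion.

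The main obstacle is the bookkeeping in this last step: one must show not merely that the surviving roots satisfy a growth bound on the overlap, but that the Stokes jump for $e^{\alpha(Q)}$ is concentrated \emph{exactly} at $d_{i+1}$ and nowhere else inside the supersector. This is precisely what ties the half-width $\pm\tfrac\pi2$ in the definition of $\supersec_i$ to the spacing of the singular directions and isolates the single direction $d_{i+1}$. Making it rigorous requires the uniqueness and (Gevrey) asymptotics theory for the canonical sectorial solutions, as in Theorem 1.4 of \cite{FIKN06}, together with the fact that the supersectors have angular opening exceeding $\pi$, so that each $\Phi_i$ is uniquely determined by its asymptotics and consecutive solutions differ only by the jumps supported on the intervening direction.
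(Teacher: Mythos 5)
This lemma is not proved in the paper at all: it is quoted verbatim from Boalch \cite{B_imrn} (the relevant background being the classical sectorial asymptotics theory of \cite{BJL}\cite{FIKN06}), so there is no internal proof to compare yours against. Your reconstruction is essentially the standard argument and it is sound: the closedness of $\cR(d_i)$ under root addition (because $\alpha(X)$, $\beta(X)$ on a common ray forces $(\alpha+\beta)(X)$ onto that ray, and likewise $\cR(d_i)\cap(-\cR(d_i))=\emptyset$) gives the group property and order-independence via the Chevalley commutator relations; and the width-$\pi$ overlap $\supersec_i\cap\supersec_{i+1}$, bisected exactly by $d_{i+1}$, is what forces every off-diagonal entry of $K_{i+1}$ outside $\cR(d_{i+1})$ to vanish. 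Two small points deserve tightening. First, "non-growing" is not the right criterion: since $\hat F_{i+1}^{-1}\hat F_i - I$ is $O(\zeta^N)$ for all $N$, you need $c_\alpha e^{\alpha(Q)}$ to \emph{decay}; for $\alpha\notin\cR(d_{i+1})$ the open half-plane of directions where $e^{\alpha(Q)}$ grows meets the open overlap (two open arcs of length $\pi$ are disjoint only if complementary), and exponential growth kills $c_\alpha$ even in the presence of a formal-monodromy factor $\zeta^\Lambda$, which you omitted (harmless here since the formal monodromy is trivial, but needed for Boalch's general statement). Second, your argument produces $K_{i+1}=I+\sum_{\alpha\in\cR(d_{i+1})}c_\alpha E_\alpha$, and one more line is needed to identify this set with $\sto_{d_{i+1}}=\prod_{\alpha\in\cR(d_{i+1})}U_\alpha$: for $SL_{n+1}\bC$ the span of the $E_\alpha$, $\alpha\in\cR(d_{i+1})$, is closed under associative matrix multiplication (as $E_{i,j}E_{k,l}=\delta_{jk}E_{i,l}$ and $\cR$ is closed), so $I+\bigoplus_{\alpha\in\cR}\bC E_\alpha$ is itself a group and coincides with $\exp\bigl(\bigoplus_{\alpha\in\cR}\bC E_\alpha\bigr)=\sto_{d_{i+1}}$. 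With these repairs your proof is complete and matches the cited source's approach.
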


In our situation $X=\diag(-1,-\omega,\dots,-\omega^n)$, from which it follows that the arguments of the singular directions are $\tfrac\pi{n+1}\bZ$ mod $2\pi$ (and $l=2n+2$). In \cite{B_imrn}  the $d_i$ are regarded as the unit complex numbers $\alpha(X)/\vert \alpha(X)\vert$ and all sectors are in $\bC^\ast$.
In keeping with the conventions of  \cite{GIL1}\cite{GIL2}, however, we shall replace  the singular directions $d$ by their arguments $\theta$ in $\bR$, and regard sectors as in the universal cover $\tilde\bC^\ast$.  As in \cite{GIL1}\cite{GIL2} we continue to use
$k\in\tfrac 1{n+1}\bZ$ as labels for all objects.

Let us choose initial singular direction
$\theta_1 = -\tfrac\pi{n+1}$,
and denote further singular directions by
\[
\theta_{1\scriptstyle\frac \ell{n+1}} = -\tfrac{\ell+1}{n+1}\pi,\quad \ell\in\bZ.
\]
Projecting to $\bC^\ast$ for $\ell=0,\dots,2n+1$ gives the singular directions in the sense of \cite{B_imrn}.  Apart from the (unimportant) fact that our labels increase in the clockwise direction, we are now in the situation of \cite{B_imrn}, as
\[
\Omz_{1\scriptstyle\frac \ell{n+1}}=
\{ \zeta\in\bC^\ast \ \vert\  \theta_{1\scriptstyle\frac\ell{n+1}}-\tfrac\pi2<\text{arg}\zeta < \theta_{1\scriptstyle\frac{\ell-1}{n+1}}+\tfrac{\pi}{2}\}
=\supersec_{1\scriptstyle\frac \ell{n+1}}.
\]
More precisely, this holds (by definition) for the initial sector given by $\ell=0$, then the projection of $\Omz_{1\scriptstyle\frac \ell{n+1}}$ to $\bC^\ast$ is $\supersec_{1\scriptstyle\frac \ell{n+1}}$ for all $\ell$.
It follows that
\[
\tQz_{1\scriptstyle\frac \ell{n+1}}=K_{1\scriptstyle\frac \ell{n+1}},
\]
i.e.\ our matrix $\tQz_{1\scriptstyle\frac \ell{n+1}}$ is --- in the terminology of
\cite{B_imrn}
--- just the Stokes factor associated to the singular direction $\theta_{1\scriptstyle\frac \ell{n+1}}$.
This is valid for all $\ell\in\bZ$ because the formal monodromy is trivial in our situation; the matrices $\tQz_{1\scriptstyle\frac \ell{n+1}}$ are unchanged under $\ell\mapsto \ell+2n+2$.

It is now clear that Lemma \ref{lm:shapeofQeven} gives the same information as Lemma \ref{lm:shapeofQevenBOALCH}.  According to the former, the off-diagonal $(i,j)$ entry of $\tQz_{1\scriptstyle\frac \ell{n+1}}$ may be nonzero only when
\[
\arg(\omega^i-\omega^j) \equiv
\tfrac{n-\ell}{n+1}\pi \text{ mod } 2\pi;
\]
according to the latter, the condition is $\arg\ \alpha_{i,j}(-d_{n+1})=\theta_{1\scriptstyle\frac \ell{n+1}}$, that is
\[
\arg(\omega^j-\omega^i) \equiv
-\tfrac{\ell+1}{n+1}\pi \text{ mod } 2\pi,
\]
which is the same.  In particular, this information gives the roots supported by each singular direction. Summarizing:

\begin{pro}\label{pro:stokesfactoreven}
The matrix $\tQz_k$ is the Stokes factor corresponding to the singular direction $\theta_k$.  The roots $\cR(\theta_k)$
for $k=
1,
1\frac 1{n+1},
1\frac n{n+1}
$
are listed in Table \ref{t1even} below.  Here $n+1=2m$ and $m=2c$ or $2c+1$.
\end{pro}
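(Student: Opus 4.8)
The first assertion is already in hand: immediately before the statement we identified $\tQz_{1\scriptstyle\frac\ell{n+1}}=K_{1\scriptstyle\frac\ell{n+1}}$ by matching each of our supersectors $\Omz_{1\scriptstyle\frac\ell{n+1}}$ with Boalch's $\supersec_{1\scriptstyle\frac\ell{n+1}}$ and comparing the defining relations of the $\tQz$ with those of the Stokes factors. So the only real content of the proposition is the explicit determination of $\cR(\theta_k)$ for the three listed directions, which I would obtain by computing the arguments $\arg(\omega^j-\omega^i)$ directly.

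The computation rests on a single closed formula. Writing $\omega=e^{2\pi\i/(n+1)}$ and factoring out the mean phase gives
\[
\omega^j-\omega^i
=
2\sin\!\left(\tfrac{\pi(j-i)}{n+1}\right)
e^{\,\i\left(\frac\pi2+\frac{\pi(i+j)}{n+1}\right)},
\]
so that $\arg(\omega^j-\omega^i)\equiv\tfrac\pi2+\tfrac{\pi(i+j)}{n+1}\pmod{2\pi}$ when $j>i$, and the same expression shifted by $\pi$ when $j<i$, according to the sign of the sine factor. Since $\alpha_{i,j}(-d_{n+1})=\omega^j-\omega^i$, the membership condition $\arg\alpha_{i,j}(-d_{n+1})\equiv\theta_{1\scriptstyle\frac\ell{n+1}}=-\tfrac{\ell+1}{n+1}\pi$ collapses, in each of the two sign ranges, to a single linear congruence on $i+j$: using $n+1=2m$ one finds
\[
i+j\equiv 3m-1-\ell\ \ (j>i),
\qquad
i+j\equiv m-1-\ell\ \ (j<i),
\]
both taken modulo $2(n+1)=4m$. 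Specializing to $\ell=0,1,n$ recovers the three directions $\theta_1=-\tfrac\pi{n+1}$, $\theta_{1\scriptstyle\frac1{n+1}}=-\tfrac{2\pi}{n+1}$, $\theta_{1\scriptstyle\frac n{n+1}}=-\pi$, and in each case the admissible pairs are exactly those $(i,j)$ with $0\le i\ne j\le n$ whose sum hits the prescribed residue in the range $\{1,\dots,2n-1\}$. Reading these off produces the entries of Table \ref{t1even}.

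Two bookkeeping steps then finish the proof. I would organize the admissible pairs according to the parity of $m$ (writing $m=2c$ or $m=2c+1$, as in the statement), since the residues $3m-1-\ell$, $m-1-\ell$ and the resulting index ranges split along this parity; this is exactly where the cases of the table separate. As a consistency check, the off-diagonal positions so obtained must match the nonvanishing pattern of Lemma \ref{lm:shapeofQeven}, and they do --- the two characterizations differing only by a full turn $2\pi$, precisely as observed in the discussion preceding the proposition. The single genuine difficulty is combinatorial rather than conceptual: one must verify that, for each of the three directions and each parity of $m$, the congruence on $i+j$ has exactly the claimed solution set inside $0\le i\ne j\le n$, with no spurious or missing pairs when the target residue forces $i+j$ to an extreme value (such as $3m-1$ or $3m$) that can leave the range $\{1,\dots,2n-1\}$ for the smallest $m$. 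Checking these extremes by hand, and cross-checking the case $n+1=4$ against \cite{GIL1}, is what completes the argument.
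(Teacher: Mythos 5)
Your proposal is correct and takes essentially the same route as the paper: the identification $\tQz_k=K_k$ via the supersector matching is exactly the paper's argument, and the table entries are obtained (as the paper does implicitly) by solving the condition $\arg\alpha_{i,j}(-d_{n+1})\equiv\theta_{1\scriptstyle\frac{\ell}{n+1}} \bmod 2\pi$, which your closed formula for $\arg(\omega^j-\omega^i)$ and the resulting congruences $i+j\equiv 3m-1-\ell$ (for $j>i$) and $i+j\equiv m-1-\ell$ (for $j<i$) modulo $2(n+1)$ carry out correctly. Your version merely makes explicit the elementary computation the paper leaves to the reader.
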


\begin{table}[h]
\renewcommand{\arraystretch}{1.3}
\begin{tabular}{c|c|l}
$\theta$ & $m$ & $\{ (i,j) \ \vert\ \alpha_{i,j}\in\cR(\theta)\}$
\\
\hline
$\theta_1$ & $2c$ &
$\scriptstyle(2c-1,0),(2c-2,1),\dots,(c,c-1)$
\
$\scriptstyle(2c,4c-1),(2c+1,4c-2),\dots,(3c-1,3c)$
\\
$\theta_1$ & $2c+1$ &
$\scriptstyle(2c+1,4c+1),(2c+2,4c),\dots,(3c,3c+2)$
\
$\scriptstyle(2c,0),(2c-1,1),\dots,(c+1,c-1)$
\\
\hline
$\theta_{1\scriptstyle\frac 1{n+1}}$ & $2c$ &
$\scriptstyle(2c-1,4c-1),(2c,4c-2),\dots,(3c-2,3c)$
\
$\scriptstyle(2c-2,0),(2c-3,1),\dots,(c,c-2)$
\\
\vphantom{$\dfrac12$}
$\theta_{1\scriptstyle\frac 1{n+1}}$ & $2c+1$ &
$\scriptstyle(2c-1,0),(2c-2,1),\dots,(c,c-1)$
\
$\scriptstyle(2c,4c+1),(2c+1,4c),\dots,(3c,3c+1)$
\\
\hline
$\theta_{1\scriptstyle\frac n{n+1}}$ & $2c$ &
$\scriptstyle(4c-1,2c+1),(4c-2,2c+2),\dots,(3c+1,3c-1)$
\
$\scriptstyle(0,2c),(1,2c-1),\dots,(c-1,c+1)$
\\
$\theta_{1\scriptstyle\frac n{n+1}}$ & $2c+1$ &
$\scriptstyle(0,2c+1),(1,2c),\dots,(c,c+1)$
\
$\scriptstyle(4c+1,2c+2),(4c,2c+3),\dots,(3c+2,3c+1)$
\end{tabular}
\bigskip
\caption{Roots associated to Stokes factors when $n+1=2m$}
\label{t1even}
\end{table}

It is pointed out in \cite{B_imrn} that the roots associated to any \lq\lq half-period\rq\rq\  of singular directions, for example,
$\theta_1,\theta_{1\scriptstyle\frac 1{n+1}},\dots,\theta_{1\scriptstyle\frac n{n+1}}$,
form a system of positive roots.  This is a general property, which holds whenever one has a pole of order $2$.
In our situation the roots associated to $\theta_1$ and $\theta_{1\scriptstyle\frac n{n+1}}$ (the head and tail of the half-period) are precisely the corresponding simple roots.  This property, which is a special feature of the particular connection $\hat\alpha$,
follows by inspection from Table \ref{t1even}.  In \cite{GH} we shall give a more conceptual proof of this.

\begin{pro}\label{pro:simplerootseven} When $n+1=2m$ and $m=2c$, the roots associated to $\theta_1$ and $\theta_{1\scriptstyle\frac n{n+1}}$ form a system of simple roots.  This system is specified by the order relation $\prec$ of $0,1,\dots,4c-1$ shown below, where $i\prec j$ means that $i$ is to the left of $j$ in the diagram.

\begin{center}
\begin{tabular}{cccccccccccccc}
$3c$ & &$3c\!+\!1$ &$\ \dots\ $ & &$4c\!-\!1$ & & $0$& &$1$ &$\ \dots\ $ & &$c\!-\!1$ &
\\
 & $3c\!-\!1$ & &$\ \dots\ $ &$2c\!+\!1$ & &$2c$ & &$2c\!-\!1$ & &$\ \dots\ $ &$c\!+\!1$ & & $c$
\end{tabular}
\end{center}

\noindent Thus $\alpha_{a,b}$ is a positive root if and only if $a\succ b$, and $\alpha_{a,b}$ is a simple root if and only if $a,b$ occur consecutively in the form
$
\begin{smallmatrix}
b &  \\  & a
\end{smallmatrix}
$
or
$
\begin{smallmatrix}
& a \\ b &
\end{smallmatrix}
$
in the above diagram.
The form
$
\begin{smallmatrix}
b &  \\  & a
\end{smallmatrix}
$
indicates a root in
$\cR(\theta_1)$, and the form
$
\begin{smallmatrix}
& a \\ b &
\end{smallmatrix}
$
indicates a root in
$\cR(\theta_{1\scriptstyle\frac n{n+1}})$.

Similarly, when $n+1=2m$ and $m=2c+1$, the roots associated to $\theta_1$ and $\theta_{1\scriptstyle\frac n{n+1}}$ are the set of simple roots for the ordering of $0,1,\dots,4c+1$ shown below.

\begin{center}
\begin{tabular}{cccccccccccccc}
 & $3c\!+\!2$ & &$\ \dots\ $ & &$4c\!+\!1$ & & $0$& &$1$ &$\ \dots\ $& $c\!-\!1$ & & $c$
\\
$3c\!+\!1$ & &$3c$ & $\ \dots\ $ &$2c\!+\!2$ & &$2c\!+\!1$ & &$2c$ & &$\ \dots\ $ & & $c\!+\!1$ &
\end{tabular}
\end{center}

\noindent In this case also, the form
$
\begin{smallmatrix}
b &  \\  & a
\end{smallmatrix}
$
indicates a root in
$\cR(\theta_1)$, and the form
$
\begin{smallmatrix}
& a \\ b &
\end{smallmatrix}
$
indicates a root in
$\cR(\theta_{1\scriptstyle\frac n{n+1}})$.
\end{pro}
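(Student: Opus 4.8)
The plan is to reduce the proposition to a combinatorial reading of Table \ref{t1even}, using the standard description of the bases of a type $A$ root system. For $\sl_{n+1}\bC$ a subset $B\subseteq\Delta_{n+1}$ is a system of simple roots if and only if, drawing for each $\alpha_{a,b}=v_a-v_b\in B$ a directed edge from $b$ to $a$ on the vertex set $\{0,1,\dots,n\}$, these edges form a \emph{directed Hamiltonian path}. In that case ordering $\{0,\dots,n\}$ along the path gives a total order $\prec$ for which $B=\{\alpha_{a,b}\mid a,b\ \text{consecutive},\ a\succ b\}$, and $B$ is the simple system of the positive system $\{\alpha_{a,b}\mid a\succ b\}$. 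Accordingly I will take $\prec$ to be the order displayed in the diagram, set $B=\cR(\theta_1)\cup\cR(\theta_{1\scriptstyle\frac n{n+1}})$, and prove two things: that $B$ is exactly the set of consecutive differences of $\prec$ (with the two shapes distinguishing the two edge directions), and that the positive system so defined coincides with the half-period positive system.

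First I would treat $m=2c$ (the case $m=2c+1$ is identical after exchanging the row on which the path begins). Writing the top row of the diagram as $T_0,\dots,T_{2c-1}$ and the bottom row as $Bo_0,\dots,Bo_{2c-1}$, the displayed order is $T_0\prec Bo_0\prec T_1\prec Bo_1\prec\cdots$, so its consecutive differences split into \lq\lq down-steps\rq\rq\ $\{T_p,Bo_p\}$ and \lq\lq up-steps\rq\rq\ $\{Bo_p,T_{p+1}\}$. Since each row is an explicit arithmetic progression read off the diagram, a direct substitution shows that the down-steps, oriented as $\alpha_{Bo_p,T_p}$ with $Bo_p\succ T_p$, reproduce the two families listed under $\cR(\theta_1)$ (one for $0\le p\le c-1$, the other for $c\le p\le 2c-1$), while the up-steps, oriented as $\alpha_{T_{p+1},Bo_p}$, reproduce the two families under $\cR(\theta_{1\scriptstyle\frac n{n+1}})$. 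This simultaneously verifies that $B$ is a directed Hamiltonian path, that the shape $\begin{smallmatrix} b & \\ & a\end{smallmatrix}$ (top-left $b$, bottom-right $a$) marks a down-step in $\cR(\theta_1)$ while $\begin{smallmatrix} & a \\ b & \end{smallmatrix}$ marks an up-step in $\cR(\theta_{1\scriptstyle\frac n{n+1}})$, and, as a sanity check, that the two lists contribute $2c$ and $2c-1$ roots, totalling $4c-1=n$, the rank of $A_n$.

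It then remains to pin down the positive system. By the remark preceding the proposition (following \cite{B_imrn}), the union $\Delta^+=\cR(\theta_1)\cup\cR(\theta_{1\scriptstyle\frac 1{n+1}})\cup\cdots\cup\cR(\theta_{1\scriptstyle\frac n{n+1}})$ over a half-period is a system of positive roots, so there is a regular functional $\ell$ with $\Delta^+=\{\alpha\mid\ell(\alpha)>0\}$. Since $B\subseteq\Delta^+$ by construction and $B$ is a base defining the positive system $\{\alpha_{a,b}\mid a\succ b\}$, every element of the latter is a nontrivial non-negative combination of elements of $B$, hence has $\ell>0$, hence lies in $\Delta^+$; as both systems contain $\binom{n+1}{2}$ roots they coincide. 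Therefore $B$ is precisely the simple system of the half-period positive system, which is the assertion, and both the order $\prec$ and the positivity rule $a\succ b$ are justified.

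The main obstacle is the combinatorial matching in the middle step: one must check that the two arithmetic families constituting each of $\cR(\theta_1)$ and $\cR(\theta_{1\scriptstyle\frac n{n+1}})$ --- which arise from the two halves of the regular $(n+1)$-gon formed by the eigenvalues of $-d_{n+1}$ --- interlock into a \emph{single} monotone Hamiltonian path with no repeated vertices and a globally consistent orientation. It is this consistency, rather than mere membership of $n$ roots in $\Delta^+$, that makes $B$ a base; a set of $n$ positive roots need not be a simple system. The bookkeeping must be carried out twice, for $m=2c$ and $m=2c+1$, where the path begins on opposite rows of the diagram, but the two arguments are formally the same.
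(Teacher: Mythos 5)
Your proposal is correct and takes essentially the same approach as the paper: the paper's entire proof of Proposition \ref{pro:simplerootseven} is the assertion that the claim \lq\lq follows by inspection from Table \ref{t1even}\rq\rq, and your argument is precisely that inspection carried out rigorously --- reading the zig-zag order off the diagram, matching the arithmetic families in $\cR(\theta_1)$ and $\cR(\theta_{1\scriptstyle\frac n{n+1}})$ to the down-steps and up-steps, and identifying the resulting positive system with the half-period system of \cite{B_imrn} by the cardinality argument. The only difference is completeness: you make explicit the type-$A$ fact that bases correspond to directed Hamiltonian paths (equivalently, total orderings of $\{0,\dots,n\}$) and the verification that the base you construct is the base \emph{of the half-period positive system}, details which the paper omits and whose conceptual treatment it defers to \cite{GH}.
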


Thus our monodromy matrix $\tMz$ (Definition \ref{def:qqpi}) belongs to the subspace $\cM_{n+1}$
of $SL_{n+1}\bC$ defined as follows:

\begin{df}\label{def:meven}
When $n+1=2m$,
$\cM_{n+1}=
\{
(\underset{\alpha}\Pi \,U_\alpha)
(\underset{\beta}\Pi \,U_\beta)
\hat\Pi \ \vert\ \alpha\in\cR(\theta_1),\
\beta\in\cR(\theta_{1\scriptstyle\frac 1{n+1}})
\}.
$
\end{df}

It is clear from the explicit description in Proposition \ref{pro:simplerootseven} that
$E_{\alpha_{i,j}} E_{\alpha_{k,l}} = E_{\alpha_{k,l}} E_{\alpha_{i,j}} $
for any $\alpha_{i,j},\alpha_{k,l}\in \cR(\theta_1)$, because we have
either $(i,j)=(k,l)$ or $\{i,j\}\cap\{k,l\}=\emptyset$.
The same applies to $\cR(\theta_{1\scriptstyle\frac 1{n+1}})$.
Thus we have a stronger property than the fact (see Lemma \ref{lm:shapeofQevenBOALCH}) that the subgroups
$
\{
\underset{\alpha}\Pi \,U_\alpha
\ \vert\ \alpha\in\cR(\theta_1)
\}
$,
$
\{
\underset{\alpha}\Pi \,U_\beta
\ \vert\ \beta\in\cR(\theta_1)
\}
$
are independent of the ordering of the factors; it is actually true that
the individual factors commute in each subgroup.

\subsection{Monodromy data for the case $n+1=2m+1$}
$  $

This generalizes the case $n+1=5$ in section 4 of \cite{GIL1} and
Appendix B of \cite{GIL2}.
We shall just point out the differences from the case $n+1=2m$.

Diagonalization of the leading term $-W = \tilde P_0 \,(-d_{n+1}) \, \tilde P_0^{-1}$ at $\zeta=0$ involves a different choice of $\tilde P_0$, namely
$\tilde P_0=e^{-w}\, \Omega\,
\diag(1,\omega^{m+1},\omega^{2(m+1)},\dots,\omega^{n(m+1)})$.
Corresponding to this, at $\zeta=\infty$ we use
$\tilde P_\infty=e^{w}\, \Omega^{-1}\,
\diag(1,\omega^{m+1},\omega^{2(m+1)},\dots,\omega^{n(m+1)})^{-1}$.

Initial Stokes sectors at zero and infinity are taken to be
\[
\Omz_1=\{ \zeta\in\bC^\ast \ \vert \ -\tfrac{\pi}{2}-\tfrac{\pi}{2(n+1)}<\text{arg}\zeta <\tfrac\pi2+\tfrac{\pi}{2(n+1)} \} \ = \ \Omi_1,
\]
then we define
\[
\Omz_{k\frac{1}{n+1}}= e^{-\frac\pi{n+1}\i} \Omz_k,
\quad
\Omi_{k\frac1{n+1}}= e^{\frac\pi{n+1}\i} \Omi_k
\]
for any $k\in\tfrac1{n+1}\bZ$.

The symmetries of $\hat\alpha$ lead to the following symmetries of the holomorphic solutions
$\tPsiz_k$:
\begin{itemize}
\item
$d_{n+1}^{-1}\tPsiz_{k-\scriptstyle\frac2{n+1}}(\omega\zeta)\Pi^{-1}\omega^{m}=\tPsiz_k(\zeta),
$
\item
$(n+1)\Delta \left( \tPsiz_{k-1}(e^{\i\pi}\zeta)^{T}\right)^{-1}=\tPsiz_k(\zeta)$
\item
$(n+1)\Delta \tPsii_{k}(\frac 1{x^2\zeta}) = \tPsiz_{k}(\zeta)$
\item
$\overline{  \tPsiz_{\scriptstyle2-k}(\bar\zeta)  } C = \tPsiz_k(\zeta),
\
C=
\left(
\begin{smallmatrix}
1 & & & &\\
 & & & & \,1\\
 & & & \iddots &\\
 & 1 & & &
\end{smallmatrix}
\right)$.
\end{itemize}

From these symmetries we obtain:
\begin{itemize}
\item
$\tQz_{k\scriptstyle\frac2{n+1}} =
\Pi\, \tQz_k \, \Pi^{-1}
$
\item
$\tQz_{k+1} = \left( \tQz_k{}^{T}\right)^{-1}$
$\vphantom{\dfrac12}$
\item
$\tQz_{k}=\tQi_{k}$
$\vphantom{\dfrac12}$
\item
$\tQz_k=  C
\left(
\overline{\tQz_{ {\scriptstyle\frac{2n+1}{n+1} - k}  }}
\right)^{-1}  C$.
\end{itemize}

The analogue of Lemma \ref{lm:shapeofQeven} is:

\begin{lm}\label{lm:shapeofQodd}
When $n+1=2m+1$, the $(i,j)$ entry of the matrix $\tQz_{1\scriptstyle\frac\ell{n+1}}$ satisfies
\[
\left(
\tQz_{1\scriptstyle\frac\ell{n+1}}
\right)_{i,j}=
\begin{cases}
1 \quad \text{if}\ i=j
\\
0 \quad
\text{if $i\ne j$, and
$\arg(\omega^i-\omega^j) \not\equiv
(2n+1-2\ell)\tfrac{\pi}{2(n+1)}$ mod $2\pi$
}
\end{cases}
\]
\end{lm}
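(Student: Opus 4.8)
The plan is to prove Lemma~\ref{lm:shapeofQodd} by repeating the asymptotic argument that underlies Lemma~\ref{lm:shapeofQeven} --- which itself is carried out exactly as in Lemma 4.4 of \cite{GIL1} --- and changing only the data special to the case $n+1=2m+1$. The crucial observation is that the leading coefficient of $1/\zeta^2$ in (\ref{meromorphicsystem}) is still $-W$, which is conjugate to $-d_{n+1}=\diag(-1,-\omega,\dots,-\omega^n)$ exactly as before; hence the mechanism producing the shape of each $\tQz_{1\scriptstyle\frac\ell{n+1}}$ is identical, and all that genuinely differs is the angular placement of the initial Stokes sector $\Omz_1$ (together with the parallel integer-power choice of $\tilde P_0$), which relabels the singular directions.

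First I would recall that, by the choice of formal solution $\tPsiz_f(\zeta)=\tilde P_0(I+\sum_{k\ge1}\tpsiz_k\zeta^k)e^{d_{n+1}/\zeta}$ and the uniqueness theorem (Theorem~1.4 of \cite{FIKN06}), the factor $\tQz_{1\scriptstyle\frac\ell{n+1}}$ relates two genuine holomorphic solutions on adjacent, overlapping Stokes sectors, both asymptotic to $\tPsiz_f$ there. Writing this relation and stripping off $\tilde P_0$ and the factor $I+\sum\tpsiz_k\zeta^k\to I$, the matrix $\tQz_{1\scriptstyle\frac\ell{n+1}}$ becomes, after conjugation by $e^{d_{n+1}/\zeta}$, a matrix whose $(i,j)$ entry is rescaled by $e^{(\omega^i-\omega^j)/\zeta}$. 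For the common asymptotic $\sim\tPsiz_f$ to persist across the overlap this exponential must stay bounded there, which forces the $(i,j)$ entry to vanish unless the singular (anti-Stokes) direction of the pair $(i,j)$ is precisely the one crossed between the two sectors. Because passing from one sector to its neighbour crosses exactly one singular direction, the factor is unipotent --- so its diagonal entries equal $1$ --- with off-diagonal support confined to those roots $\alpha_{i,j}$ for which $\arg\alpha_{i,j}(-d_{n+1})=\theta_{1\scriptstyle\frac\ell{n+1}}$. (Since Lemma~\ref{lm:shapeofQevenBOALCH} is stated for an arbitrary order-$2$ pole, it gives the same conclusion, but the direct argument is self-contained at this stage.)

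It then remains to turn the support condition into the stated congruence. Using $\alpha_{i,j}(-d_{n+1})=\omega^j-\omega^i$, the condition reads $\arg(\omega^j-\omega^i)\equiv\theta_{1\scriptstyle\frac\ell{n+1}}$, equivalently $\arg(\omega^i-\omega^j)\equiv\theta_{1\scriptstyle\frac\ell{n+1}}+\pi$ mod $2\pi$. The odd-case sector $\Omz_1=\{-\tfrac\pi2-\tfrac{\pi}{2(n+1)}<\arg\zeta<\tfrac\pi2+\tfrac{\pi}{2(n+1)}\}$, via the supersector identity $\Omz_{1\scriptstyle\frac\ell{n+1}}=\supersec_{1\scriptstyle\frac\ell{n+1}}$, yields the initial singular direction $\theta_1=-\tfrac{\pi}{2(n+1)}$, and hence $\theta_{1\scriptstyle\frac\ell{n+1}}=-\tfrac{(2\ell+1)\pi}{2(n+1)}$. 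Substituting this gives $\arg(\omega^i-\omega^j)\equiv -\tfrac{(2\ell+1)\pi}{2(n+1)}+\pi=(2n+1-2\ell)\tfrac{\pi}{2(n+1)}$ mod $2\pi$, which is exactly the asserted condition.

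I expect the main obstacle to be not conceptual but purely a matter of getting the angular bookkeeping exactly right. The even and odd conditions differ by the half-step $\tfrac{\pi}{2(n+1)}$: in the even case $\theta_1=-\tfrac{\pi}{n+1}$ produces the coefficient $n-\ell=\tfrac{2n-2\ell}{2}$, whereas here one must obtain $\tfrac{2n+1-2\ell}{2}$. This half-step shift originates from the symmetric, half-step-wider initial sector $\Omz_1$ used when $n+1=2m+1$ (matched by the integer-power choice $\tilde P_0=e^{-w}\,\Omega\,\diag(1,\omega^{m+1},\dots,\omega^{n(m+1)})$ in place of the half-integer-power diagonal used when $n+1=2m$). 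The care required is to confirm that these modifications displace $\theta_1$ by exactly $\tfrac{\pi}{2(n+1)}$ and no more, so that the final congruence carries the factor $2n+1-2\ell$ rather than $2n-2\ell$.
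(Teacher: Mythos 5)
Your proposal is correct and takes essentially the same approach as the paper: the paper proves this lemma exactly as it proves Lemma \ref{lm:shapeofQeven}, namely by the Stokes-factor asymptotic argument of Lemma 4.4 of \cite{GIL1} adjusted to the odd-case data, and your identification $\theta_{1\frac{\ell}{n+1}}=-\tfrac{2\ell+1}{2(n+1)}\pi$ together with the support condition $\arg\alpha_{i,j}(-d_{n+1})=\theta_{1\frac{\ell}{n+1}}$ yields precisely the stated congruence. (One small correction of emphasis, not affecting the argument: the half-step offset is intrinsic to the parity of $n+1$ --- for $n+1$ odd the directions $\arg(\omega^j-\omega^i)$ automatically lie in $\tfrac{\pi}{2(n+1)}+\tfrac{\pi}{n+1}\bZ$ --- and the symmetric sector $\Omega^{(0)}_1$ and the integer-power choice of $\tilde P_0$ are adapted to this fact, rather than being its cause.)
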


The monodromy of $\tPsiz_1$ is in this case
$
\tSz_1\tSz_2=
(\tQz_1\tQz_{1\scriptstyle\frac1{n+1}}\Pi)^{n+1},
$
so the analogue of Definition \ref{def:qqpi} is:

\begin{df}\label{def:qqpiodd}
When $n+1=2m+1$, $\tMz
\overset{\scriptstyle \text{\em def} }=
\tQz_1\tQz_{1\scriptstyle\frac1{n+1}}\Pi$.
\end{df}

Lemma \ref{lm:shapeofQevenBOALCH} still applies, but the singular directions are different in this case.  We still have $X=\diag(-1,-\omega,\dots,-\omega^n)$, but the arguments of the singular directions are $\tfrac\pi{2(n+1)}+\tfrac\pi{n+1}\bZ$ mod $2\pi$.
We choose initial singular direction
$\theta_1 = -\tfrac\pi{2(n+1)}$,
and denote further singular directions by
\[
\theta_{1\scriptstyle\frac \ell{n+1}} = -\tfrac{2\ell+1}{2(n+1)}\pi,\quad \ell\in\bZ.
\]

\begin{pro}\label{pro:stokesfactorodd}
The matrix $\tQz_k$ is the Stokes factor corresponding to the singular direction $\theta_k$.  The roots $\cR(\theta_k)$
for $k=
1,
1\frac 1{n+1},
1\frac n{n+1}
$
are listed in Table \ref{t1odd} below.  Here $n+1=2m+1$ and $m=2c$ or $2c+1$.
\end{pro}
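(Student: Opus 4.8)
The proof is a direct analogue of that of Proposition \ref{pro:stokesfactoreven}; the only genuine differences are the shift of all singular directions by $\tfrac{\pi}{2(n+1)}$ and the bookkeeping of the two parity subcases. First I would establish that $\tQz_k$ is the Stokes factor $K_k$ of \cite{B_imrn}. For the initial label $\ell=0$ one checks directly that the Boalch supersector $\supersec_1$, bounded by $\theta_1-\tfrac\pi2=-\tfrac\pi2-\tfrac{\pi}{2(n+1)}$ and $\theta_{1\scriptstyle\frac{-1}{n+1}}+\tfrac\pi2=\tfrac\pi2+\tfrac{\pi}{2(n+1)}$, coincides with $\Omz_1$. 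Since both families $\Omz_{1\scriptstyle\frac\ell{n+1}}$ and $\supersec_{1\scriptstyle\frac\ell{n+1}}$ are generated from the initial sector by the same rotation $\zeta\mapsto e^{-\frac{\pi}{n+1}\ii}\zeta$, this coincidence propagates to all $\ell$. By uniqueness of the holomorphic solution asymptotic to the chosen formal solution we then have $\tPsiz_{1\scriptstyle\frac\ell{n+1}}=\Phi_{1\scriptstyle\frac\ell{n+1}}$, hence $\tQz_{1\scriptstyle\frac\ell{n+1}}=K_{1\scriptstyle\frac\ell{n+1}}$; as in the even case this is valid for all $\ell\in\bZ$ because the formal monodromy is trivial in our situation.

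For the root sets I would match the two available descriptions of the support. By Lemma \ref{lm:shapeofQevenBOALCH}, $\alpha_{i,j}\in\cR(\theta_{1\scriptstyle\frac\ell{n+1}})$ precisely when $\arg\alpha_{i,j}(-d_{n+1})=\theta_{1\scriptstyle\frac\ell{n+1}}$, i.e.\ $\arg(\omega^j-\omega^i)\equiv -\tfrac{2\ell+1}{2(n+1)}\pi$ mod $2\pi$; by Lemma \ref{lm:shapeofQodd} the off-diagonal $(i,j)$ entry of $\tQz_{1\scriptstyle\frac\ell{n+1}}$ may be nonzero only when $\arg(\omega^i-\omega^j)\equiv(2n+1-2\ell)\tfrac{\pi}{2(n+1)}$ mod $2\pi$. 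These two conditions agree, since $\arg(\omega^i-\omega^j)$ and $\arg(\omega^j-\omega^i)$ differ by $\pi=\tfrac{2(n+1)}{2(n+1)}\pi$. To read off the actual pairs I would use the closed form
\[
\omega^i-\omega^j=2\sin\!\left(\tfrac{\pi(i-j)}{n+1}\right)\,
e^{\,\ii\left(\frac{\pi(i+j)}{n+1}+\frac\pi2\right)},
\]
so that $\arg(\omega^i-\omega^j)\equiv\tfrac{\pi(i+j)}{n+1}+\tfrac\pi2$ when $\sin\tfrac{\pi(i-j)}{n+1}>0$ and $\equiv\tfrac{\pi(i+j)}{n+1}-\tfrac\pi2$ otherwise. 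Substituting this into the congruence for each of $\ell=0,1,n$ reduces the support condition to a linear congruence on $i+j$ modulo $2(n+1)$ together with a sign constraint on $i-j$.

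The only real work — and the main obstacle — is then the explicit enumeration of the solution pairs of these congruences, carried out separately for $m=2c$ and $m=2c+1$, which produces Table \ref{t1odd}. This is entirely parallel to the even computation behind Table \ref{t1even} and is done by inspection: for each admissible residue of $i+j$ one lists the pairs $(i,j)$ with $0\le i,j\le n$ respecting the sign of $\sin\tfrac{\pi(i-j)}{n+1}$, and checks that together they exhaust $\cR(\theta_k)$. I expect no conceptual difficulty here, only the bookkeeping of the two parity subcases and the wrap-around of indices modulo $n+1$; the symmetry relation $\tQz_{k\scriptstyle\frac2{n+1}}=\Pi\,\tQz_k\,\Pi^{-1}$ displayed above can be used as a consistency check on the resulting lists.
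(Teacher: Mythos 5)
Your proposal is correct and follows essentially the same route as the paper: for the odd case the paper simply carries over the even-case argument (identification of the sectors $\Omz_k$ with Boalch's supersectors to conclude $\tQz_k=K_k$, then comparison of Lemma \ref{lm:shapeofQodd} with Lemma \ref{lm:shapeofQevenBOALCH}) using the shifted singular directions $\theta_{1\scriptstyle\frac{\ell}{n+1}}=-\tfrac{2\ell+1}{2(n+1)}\pi$, exactly as you do. Your closed formula for $\arg(\omega^i-\omega^j)$ and the resulting congruence on $i+j$ modulo $2(n+1)$ is just the enumeration behind Table \ref{t1odd} written out explicitly, and it checks out.
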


\begin{table}[h]
\renewcommand{\arraystretch}{1.3}
\begin{tabular}{c|c|l}
$\theta$ & $m$ & $\{ (i,j) \ \vert\ \alpha_{i,j}\in\cR(\theta)\}$
\\
\hline
$\theta_1$ & $2c$ &
$\scriptstyle(2c,0),(2c-1,1),\dots,(c+1,c-1)$
\
$\scriptstyle(2c+1,4c),(2c+2,4c-1),\dots,(3c,3c+1)$
\\
$\theta_1$ & $2c+1$ &
$\scriptstyle(2c+1,0),(2c,1),\dots,(c+1,c)$
\
$\scriptstyle(2c+2,4c+2),(2c+3,4c+1),\dots,(3c+1,3c+3)$
\\
\hline
$\theta_{1\scriptstyle\frac 1{n+1}}$ & $2c$ &
$\scriptstyle(2c,4c),(2c+1,4c-1),\dots,(3c-1,3c+1)$
\
$\scriptstyle(2c-1,0),(2c-2,1),\dots,(c,c-1)$
\\
\vphantom{$\dfrac12$}
$\theta_{1\scriptstyle\frac 1{n+1}}$ & $2c+1$ &
$\scriptstyle(2c,0),(2c-1,1),\dots,(c+1,c-1)$
\
$\scriptstyle(2c+1,4c+2),(2c+2,4c+1),\dots,(3c+1,3c+2)$
\\
\hline
$\theta_{1\scriptstyle\frac n{n+1}}$ & $2c$ &
$\scriptstyle(4c,2c+2),(4c-1,2c+3),\dots,(3c+2,3c)$
\
$\scriptstyle(0,2c+1),(1,2c),\dots,(c,c+1)$
\\
$\theta_{1\scriptstyle\frac n{n+1}}$ & $2c+1$ &
$\scriptstyle(0,2c+2),(1,2c+1),\dots,(c,c+2)$
\
$\scriptstyle(4c+2,2c+3),(4c+1,2c+4),\dots,(3c+3,3c+2)$
\end{tabular}
\bigskip
\caption{Roots associated to Stokes factors when $n+1=2m+1$}
\label{t1odd}
\end{table}

From this we obtain the analogue of Proposition \ref{pro:simplerootseven}:

\begin{pro}\label{pro:simplerootsodd} When $n+1=2m+1$ and $m=2c$, the roots associated to $\theta_1$ and $\theta_{1\scriptstyle\frac n{n+1}}$ form a system of simple roots.  This system is specified by
the order relation $\prec$ of $0,1,\dots,4c$ shown below, where $i\prec j$ means that $i$ is to the left of $j$ in the diagram.

\begin{center}
\begin{tabular}{cccccccccccccc}
$3c+1$ & &$3c\!+\!2$ &$\ \dots\ $ & &$4c$ & & $0$& &$1$ &$\ \dots\ $ &$c\!-\!1$ & &c
\\
 & $3c$ & &$\ \dots\ $ &$2c\!+\!2$ & &$2c\!+\!1$ & &$2c$ & &$\ \dots\ $ & &$c\!+\!1$ &
\end{tabular}
\end{center}

\noindent Thus $\alpha_{a,b}$ is a positive root if and only if $a\succ b$,
and $\alpha_{a,b}$ is a simple root if and only if $a,b$ occur consecutively in the form
$
\begin{smallmatrix}
b &  \\  & a
\end{smallmatrix}
$
or
$
\begin{smallmatrix}
& a \\ b &
\end{smallmatrix}
$
in the above diagram.
The form
$
\begin{smallmatrix}
b &  \\  & a
\end{smallmatrix}
$
indicates a root in
$\cR(\theta_1)$, and the form
$
\begin{smallmatrix}
& a \\ b &
\end{smallmatrix}
$
indicates a root in
$\cR(\theta_{1\scriptstyle\frac n{n+1}})$.

Similarly, when $n+1=2m+1$ and $m=2c+1$, the roots associated to $\theta_1$ and $\theta_{1\scriptstyle\frac n{n+1}}$ are the set of simple roots for the ordering of $0,1,\dots,4c+2$ shown below.

\begin{center}
\begin{tabular}{cccccccccccccc}
 & $3c\!+\!3$ & &$\ \dots\ $ & &$4c\!+\!2$ & & $0$& &$1$ &$\ \dots\ $& & $c$ &
\\
$3c\!+\!2$ & &$3c\!+\!1$ & $\ \dots\ $ &$2c\!+\!3$ & & $2c\!+\!2$ & &$2c\!+\!1$ & &$\ \dots\ $ & $c\!+\!2$&  & $c\!+\!1$
\end{tabular}
\end{center}

\noindent In this case also, the form
$
\begin{smallmatrix}
b &  \\  & a
\end{smallmatrix}
$
indicates a root in
$\cR(\theta_1)$, and the form
$
\begin{smallmatrix}
& a \\ b &
\end{smallmatrix}
$
indicates a root in
$\cR(\theta_{1\scriptstyle\frac n{n+1}})$.
\end{pro}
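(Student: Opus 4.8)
The plan is to prove Proposition \ref{pro:simplerootsodd} in parallel with Proposition \ref{pro:simplerootseven}, by combining the general principle recalled just before Proposition \ref{pro:simplerootseven} (from \cite{B_imrn}: any half-period of singular directions supports a system of positive roots) with a direct inspection of Table \ref{t1odd}. The argument splits into three parts: (i) the roots supported by the half-period $\theta_1,\theta_{1\scriptstyle\frac 1{n+1}},\dots,\theta_{1\scriptstyle\frac n{n+1}}$ form a positive system $\Delta^+\subset\Delta_{n+1}$; (ii) $\Delta^+$ is exactly the system attached to the total order $\prec$ displayed in the zigzag diagram; (iii) the roots listed for $\cR(\theta_1)$ and $\cR(\theta_{1\scriptstyle\frac n{n+1}})$ in Table \ref{t1odd} are precisely the simple roots of $\Delta^+$, sorted by the two step-types of the zigzag.

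For part (ii) I would make the order $\prec$ explicit through a clean positivity criterion. Since $X=\diag(-1,-\omega,\dots,-\omega^n)$ and $\alpha_{i,j}(-d_{n+1})=\omega^j-\omega^i$, a root $\alpha_{i,j}$ is positive exactly when its singular direction lies in the lower half-plane; because the $n+1$ directions complementary to the half-period are the antipodal ones (lying in the upper half-plane), positivity is equivalent to $\mathrm{Im}(\omega^j-\omega^i)<0$, that is
\[
\alpha_{i,j}\in\Delta^+ \iff \sin\tfrac{2\pi i}{n+1}>\sin\tfrac{2\pi j}{n+1}.
\]
When $n+1$ is odd there are no ties, since $\sin\tfrac{2\pi i}{n+1}=\sin\tfrac{2\pi j}{n+1}$ with $i\ne j$ would force $2(i+j)\equiv n+1 \pmod{2(n+1)}$, impossible for $n+1$ odd; this is the feature that makes the odd case cleaner than the even one (where a root can lie on the real axis). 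Thus $\prec$ is simply the order that lists $0,1,\dots,n$ by increasing value of $\sin\tfrac{2\pi v}{n+1}$, and one checks that the two zigzag diagrams in the statement (for $m=2c$ and $m=2c+1$) reproduce these two orderings.

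For part (iii) I would invoke the standard type-$A$ description of simple roots: for the positive system attached to a total order $p_0\prec p_1\prec\dots\prec p_n$ on $\{0,\dots,n\}$, the simple roots are the $n$ covering roots $\alpha_{p_{s+1},p_s}$ between consecutive entries, because $\alpha_{p_s,p_r}=\sum_{t=r}^{s-1}\alpha_{p_{t+1},p_t}$. It then remains to read off from Table \ref{t1odd} that the listed roots are exactly these covering pairs: a pair appearing as $\begin{smallmatrix} b & \\ & a \end{smallmatrix}$ (a top-row entry immediately followed by a bottom-row entry, a down-step) is $\alpha_{a,b}\in\cR(\theta_1)$, while $\begin{smallmatrix} & a \\ b & \end{smallmatrix}$ (a bottom-row entry followed by a top-row entry, an up-step) is $\alpha_{a,b}\in\cR(\theta_{1\scriptstyle\frac n{n+1}})$. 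As a consistency check, the counts in Table \ref{t1odd} give $|\cR(\theta_1)|+|\cR(\theta_{1\scriptstyle\frac n{n+1}})|=n$, matching the rank of $\sl_{n+1}\bC$.

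The main obstacle is the bookkeeping in parts (ii) and (iii): verifying that the explicitly displayed zigzag permutation coincides with the $\sin$-ordering, and that the head and tail directions $\theta_1,\theta_{1\scriptstyle\frac n{n+1}}$ (rather than some intermediate direction) carry each covering pair, while keeping track of reductions modulo $n+1$ and treating the two parity subcases $m=2c$ and $m=2c+1$ separately. Once the order is pinned down these become finite, routine verifications; the positivity statement (i) is immediate from \cite{B_imrn}, and the covering-root description reduces (iii) to a direct comparison with Table \ref{t1odd}. As the text notes, a more conceptual derivation avoiding this case-by-case inspection is deferred to \cite{GH}.
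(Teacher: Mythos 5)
Your proposal is correct and follows essentially the same route as the paper: Boalch's half-period positivity result combined with a direct comparison against Table \ref{t1odd}, whose root lists were already established in Proposition \ref{pro:stokesfactorodd}. The paper's own justification is literally ``by inspection'' of that table (carried over from the even case), and your sine-ordering positivity criterion, the no-ties observation for odd $n+1$, and the covering-root characterization of simple roots are just a systematic — and accurate — way of organizing that same inspection.
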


The analogue of Definition \ref{def:meven} is:

\begin{df}\label{def:modd}
When $n+1=2m+1$,
$\cM_{n+1}=
\{
(\underset{\alpha}\Pi \,U_\alpha)
(\underset{\beta}\Pi \,U_\beta)
\Pi \ \vert\ \alpha\in\cR(\theta_1),\
\beta\in\cR(\theta_{1\scriptstyle\frac 1{n+1}})
\}.
$
\end{df}

\section{Conjugacy classes}\label{steinberg}

We begin by quoting some results from \cite{S}.
Let $G$ be a linear reductive algebraic group over $\bC$ and $n$ its rank (the dimension of an algebraic maximal torus $T$).  Linearity means that $G$ may be regarded as a subgroup of $GL(V)$ for some finite-dimensional complex vector space $V$.

Recall that an element of $G$ is said to be regular if its stabilizer has minimal dimension (necessarily $n$), and semisimple if its adjoint representation is semisimple. We denote by $G^{reg}$ the set of all regular elements of $G$, and by $G^{ss}$ the set of all semisimple elements.  For any subset $X\subseteq G$ which is invariant under conjugation by elements of $G$, we denote by $X/G$ the set of conjugacy classes.
Since $G$ is not compact, $G/G$ and in general $X/G$ are not Hausdorff. To obtain a Hausdorff quotient, one should take $G^{ss}/G$ and in general $X^{ss}/G$, as semisimple elements here are precisely the elements whose orbits under conjugation are closed (cf.\ \cite{Ri}). This type of quotient is sometimes called the categorical quotient.

Recall also that, for an endomorphism $g$ of $V$, the multiplicative Jordan-Chevalley decomposition $g=g^{ss}g^{u}=g^{u}g^{ss}$ exists, and the factors are unique, where
$g^{ss}$ is semisimple and $g^{u}$ is unipotent.  Furthermore, if $g\in G$, then also $g^{ss},g^{u}\in G$.

\begin{pro}\cite{S}(Theorem 3, p.115)\label{pro:S1}
There is a bijection $G^{reg}/G\cong G^{ss}/G$ given by $g\mapsto g^{ss}$, i.e.\ by sending an element to its semi-simple part.
\end{pro}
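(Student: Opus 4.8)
The plan is to show that $g \mapsto g^{ss}$ descends to a well-defined map $\phi \colon G^{reg}/G \to G^{ss}/G$ and then to prove that $\phi$ is both injective and surjective. Well-definedness is immediate from the uniqueness of the Jordan--Chevalley decomposition: if $g' = hgh^{-1}$ then $(hg^{ss}h^{-1})(hg^{u}h^{-1})$ is a decomposition of $g'$ into commuting semisimple and unipotent factors, so $(g')^{ss} = hg^{ss}h^{-1}$; hence conjugate elements have conjugate semisimple parts, and the target really is $G^{ss}/G$ since $g^{ss}$ is semisimple and lies in $G$. Two structural facts will be used throughout. First, a semisimple $s$ lies in some maximal torus $T$, and $T \subseteq Z_G(s)$, so $H := Z_G(s)^\circ$ is a connected reductive group of the full rank $n$. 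Second, since $g = g^{ss}g^{u}$ is the Jordan decomposition, $Z_G(g) = Z_G(g^{ss}) \cap Z_G(g^{u})$, so for $g$ with semisimple part $s$ and unipotent part $u$ we get $\dim Z_G(g) = \dim Z_{Z_G(s)}(u) = \dim Z_H(u)$, the last equality because passing to the identity component alters the centraliser only by a finite group. As $H$ has rank $n$, this shows that $g$ is regular in $G$ precisely when $u$ is a \emph{regular unipotent} element of the connected reductive group $H$.

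For surjectivity, fix a semisimple class and a representative $s$. Since $H = Z_G(s)^\circ$ is connected reductive, it contains a regular unipotent element $u$; put $g = su = us$. Then $g^{ss} = s$, and by the criterion just established $g$ is regular in $G$, so $\phi([g]) = [s]$. Thus every semisimple class lies in the image.

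For injectivity, suppose $g_1, g_2 \in G^{reg}$ with $\phi([g_1]) = \phi([g_2])$. After conjugating $g_2$ we may assume $g_1^{ss} = g_2^{ss} = s$, and write $g_i = s u_i$ with each $u_i$ unipotent and commuting with $s$, hence $u_i \in H$. By the regularity criterion each $u_i$ is a regular unipotent element of $H$. The crux is to conjugate $u_1$ to $u_2$ inside $H$: if $h \in H$ satisfies $h u_1 h^{-1} = u_2$, then, since $h$ commutes with $s$, we obtain $h g_1 h^{-1} = s(h u_1 h^{-1}) = s u_2 = g_2$, giving $[g_1] = [g_2]$. I expect this step to be the main obstacle, and it is exactly where the substantive input from Steinberg enters: the regular unipotent elements of a connected reductive group form a single conjugacy class. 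Granting this for $H$, the elements $u_1$ and $u_2$ are $H$-conjugate, which completes the proof of injectivity and hence of the bijection.
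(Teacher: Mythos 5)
The paper does not prove this proposition at all: it is quoted verbatim from Steinberg [S, Theorem 3, p.115], so there is no internal argument to compare against. Your proof is correct, and it is essentially a reconstruction of Steinberg's own argument. The mechanism you use --- $Z_G(g)=Z_G(g^{ss})\cap Z_G(g^{u})$, hence $g$ is regular in $G$ if and only if $g^{u}$ is a regular unipotent element of the connected reductive full-rank group $H=Z_G(g^{ss})^{\circ}$, followed by existence (for surjectivity) and single-class conjugacy (for injectivity) of regular unipotents in $H$ --- is exactly how Steinberg deduces his Theorem 3. You have also correctly identified and quarantined the substantive external inputs: that $Z_G(s)^{\circ}$ is reductive of full rank, and that regular unipotent elements of a connected reductive group exist and form one conjugacy class; these are earlier theorems in Steinberg's notes, so your argument is not circular provided they are granted independently, as you state. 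Two small points deserve a word to be airtight. First, you place the unipotent parts $u_i$ in $H$ rather than merely in the possibly disconnected group $Z_G(s)$; over $\bC$ this is fine because a unipotent element lies on a one-parameter unipotent subgroup and hence in the identity component, but it should be said. Second, existence and uniqueness of the regular unipotent class is usually stated for connected semisimple groups; transferring it to the reductive $H$ uses that unipotent elements lie in the derived group $H'$ and that $\dim Z_H(u)=\dim Z(H)^{\circ}+\dim Z_{H'}(u)$, so regularity in $H$ matches regularity in $H'$. Neither point is a gap, just bookkeeping; note also that, like Steinberg's theorem itself, the whole argument presupposes $G$ connected, which the paper leaves implicit but which holds in its application to $SL_{n+1}\bC$.
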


Consider the map
$
\chi: G \to \bC^n, \quad g\mapsto (\chi_1,\dots,\chi_n)
$
where the $\chi_i$ are the characters of the basic irreducible representations.  Clearly each fibre of this map is a union of conjugacy classes.

\begin{pro}\cite{S}(Theorem 4, p.120)\label{pro:S2}
Assume further that $G$ is simply connected and semi-simple.
The map
\[
s:\bC^n\to G^{reg},\quad
(t_1,\dots,t_n)  \mapsto e_1(t_1)\n_1\dots e_n(t_n)\n_n
\]
is a cross-section of $\chi|_{G^{reg}}:G^{reg}\to \bC^n$, where
$e_i(t_i)=\exp(t_i E_{\alpha_i})$, $\simple=\{\alpha_1,\dots,\alpha_n\}$ is an ordered set of simple roots,  the $E_{\alpha_i}$ are corresponding root vectors, and the $\n_i$ are representatives in $N(T)$ of the corresponding elements in the Weyl group $N(T)/T$.
\end{pro}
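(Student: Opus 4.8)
The content of Proposition \ref{pro:S2} is that $\chi\circ s\colon\bC^{n}\to\bC^{n}$ is an isomorphism of affine varieties and that $s(t)$ is regular for every $t$; here ``cross-section'' is understood in the sense that $\chi$ restricts to an isomorphism of the image of $s$ onto $\bC^{n}$ (so that, after composing with $(\chi\circ s)^{-1}$, or a linear normalization of the $\chi_{i}$, one obtains an honest right inverse of $\chi|_{G^{reg}}$). The plan is to concentrate essentially all the work in the single assertion that $\chi\circ s$ is a polynomial automorphism, and then to read off regularity as a formal consequence. First I would record the two facts that make $\chi$ the right map to work with: because $G$ is simply connected and semisimple, $\bC[G]^{G}=\bC[\chi_{1},\dots,\chi_{n}]$ is a polynomial algebra (\cite{S}), so $\chi$ realizes the categorical quotient $G\to G/\!\!/ G\cong\bC^{n}$ and, by Proposition \ref{pro:S1}, separates regular conjugacy classes. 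Consequently, once $\chi\circ s$ is shown to be bijective, $s$ automatically meets each fiber of $\chi|_{G^{reg}}$ --- i.e.\ each regular conjugacy class --- in exactly one point, since $s(t)$ and $s(t')$ lie in the same class only if $\chi(s(t))=\chi(s(t'))$, which forces $t=t'$ by injectivity.

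The heart of the proof, and the step I expect to be the main obstacle, is a leading--term computation for the fundamental characters along $s$. Writing $\rho_{j}$ for the $j$-th fundamental representation, with highest weight $\varpi_{j}$, I would expand
\[
\chi_{j}(s(t))=\Tr\,\rho_{j}\!\left(e_{1}(t_{1})\,\n_{1}\cdots e_{n}(t_{n})\,\n_{n}\right),\qquad e_{i}(t_{i})=\exp(t_{i}E_{\alpha_{i}}),
\]
as a polynomial in $t_{1},\dots,t_{n}$. Introducing the principal dominant one--parameter subgroup $h$ with $\alpha_{i}(h)=2$ for all $i$ makes each $E_{\alpha_{i}}$ homogeneous of the same degree and each $\chi_{j}$ homogeneous, which sharply constrains the monomials that can occur. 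The decisive claim is that, after ordering the invariants by their degrees and correspondingly ordering the variables, the associated graded of $\chi\circ s$ is triangular with nonzero diagonal: $\chi_{j}\circ s$ contains the monomial $t_{j}$ with a nonzero constant coefficient, while all remaining monomials feed in only strictly earlier variables. This is where the ordered product structure of $s$ genuinely enters: the pure $t_{j}$ contribution comes from the unique way of inserting a single raising operator $E_{\alpha_{j}}$ so that the twisted Coxeter cycle $\n_{1}\cdots\n_{n}$ returns the relevant weight line of $\rho_{j}$ to itself, and every competing insertion contributes only earlier $t_{i}$. Granting this triangularity, $\chi\circ s$ is a polynomial automorphism of $\bC^{n}$ with polynomial inverse.

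It then remains to deduce that $s(t)\in G^{reg}$. Since $\chi\circ s$ is an isomorphism it is \'etale, so $d(\chi\circ s)_{t}=d\chi_{s(t)}\circ ds_{t}$ is invertible for every $t$; in particular the differentials $d\chi_{1},\dots,d\chi_{n}$ are linearly independent at each point $s(t)$. By Steinberg's criterion (\cite{S}) that $\chi$ is submersive at $g$ precisely when $g$ is regular, it follows that every $s(t)$ is regular, so $s$ indeed maps into $G^{reg}$ and is a cross-section in the sense described above.

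Finally, I would remark that for the group $G=SL_{n+1}\bC$ actually used in this paper the whole argument collapses to a one--line verification: with the standard choices one checks directly that $s(t)$ is a companion--type matrix, hence non-derogatory and therefore always regular, and that the fundamental characters evaluate to $\chi_{j}(s(t))=\pm t_{j}$, so that $\chi\circ s$ is even linear. The general triangular computation of the second paragraph is exactly what replaces this transparency in an arbitrary simply connected semisimple $G$.
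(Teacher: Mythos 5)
First, a point of comparison: the paper does not prove Proposition \ref{pro:S2} at all --- it is quoted verbatim from Steinberg \cite{S} (Theorem 4, p.~120) and used as a black box; the closest thing to a verification in the paper is the direct computation for $n+1=4,5$ in Section \ref{examples}. So your proposal is an attempt to reprove Steinberg's theorem itself, and must be judged on its own merits.

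Judged that way, it has a genuine gap: your ``decisive claim'' --- that $\chi\circ s$ is triangular with nonzero constant diagonal coefficients in a suitable ordering --- is not a reduction of the problem; it \emph{is} the problem. Everything else in the argument (the identification $\bC[G]^G=\bC[\chi_1,\dots,\chi_n]$, separation of regular classes via Proposition \ref{pro:S1}, regularity from \'etaleness) is bookkeeping around it, and you explicitly write ``granting this triangularity''. Moreover, the two heuristics offered in its support do not survive scrutiny. (i) The principal one-parameter subgroup does not make the problem homogeneous: writing $h(a)$ for the cocharacter with $\alpha_i(h(a))=a^2$, one has
\[
h(a)\,e_i(t_i)\,\n_i\,h(a)^{-1}=e_i(a^2t_i)\,\n_i\,\check\alpha_i(a^{-2}),
\]
so conjugating $s(t)$ by $h(a)$ creates torus factors $\check\alpha_i(a^{-2})$ which, when pushed to the right end of the product, rescale the later $t_k$ by Cartan-matrix-dependent amounts and leave a nontrivial torus factor $\bigl((1-w)\check\rho\bigr)(a^{-2})$ at the far right, where $w=s_n\cdots s_1$ is the Coxeter element. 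Hence $s(t)$ is \emph{not} conjugate to $s(\text{rescaled }t)$, and no quasi-homogeneity of $\chi_j\circ s$ follows from conjugation-invariance of $\chi_j$ alone. (ii) The ``unique insertion'' count requires real work: invertibility of $1-w$ on the weight lattice, identification of the unique weight $\mu$ solving $(1-w)\mu=s_n\cdots s_{j+1}(\alpha_j)$ as a weight of $\rho_j$ occurring with multiplicity one, and nonvanishing of the matrix coefficients of $\rho_j(E_{\alpha_j})$ and the $\rho_j(\n_i)$ along the corresponding cycle of weight lines. This weight-path analysis is precisely the technical core of Steinberg's proof, and it is absent from yours.

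Two smaller points. Your regularity step quotes another nontrivial theorem of Steinberg (submersivity of $\chi$ at $g$ if and only if $g$ is regular); that is legitimate in principle, but in \cite{S} this criterion and the cross-section theorem belong to the same circle of results, so one must check the appeal is not circular. Finally, your closing remark that for $SL_{n+1}\bC$ everything reduces to a companion matrix is correct only for the standard simple roots in the standard order; the paper needs Proposition \ref{pro:S2} for the ordered simple systems $\cR(\theta_1)\cup\cR(\theta_{1\frac{n}{n+1}})$ produced by the Stokes analysis (Theorem \ref{thm:m-is-r}), where the image of $s$ consists of the companion-\emph{like} matrices displayed in Section \ref{examples} rather than genuine companion matrices, so that shortcut does not cover the case actually used here.
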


Thus $\bC^n$  parametrizes the {\em regular} conjugacy classes in $G$, and the cross-section gives a particular choice of representatives.
In the case $G=SL_{n+1}\bC$, for the standard choice of simple roots
$\alpha_{1,0},\alpha_{2,1},\dots,\alpha_{n,n-1}$,
this choice of representatives is nothing but the rational normal form of a cyclic matrix (a single companion matrix).  However, in order to deal with other choices (which arise from the Stokes analysis), we make the following definition.

\begin{df}\label{def:rpi}  Let $G=SL_{n+1}\bC$.  Using the notation of Proposition \ref{pro:S2}, we define
\[
\cR_{n+1}^\simple=\{  e_1(t_1)\n_1\dots e_n(t_n)\n_n  \ \vert\ (t_1,\dots,t_n)\in \bC^n \}
\quad(\subseteq SL_{n+1}\bC)
\]
where $\simple$ is a given ordered set of simple roots.
\end{df}

The main result of this section, whose proof will be given in the next two subsections,  is:

\begin{thm}\label{thm:m-is-r}
$\cM_{n+1}=\cR_{n+1}^\simple$, where $\simple =
\cR(\theta_1)\cup\cR(\theta_{1\scriptstyle\frac n{n+1}})$.
In other words, the space $\cM_{n+1}$ in the Stokes analysis of section \ref{boalch} is actually (the image of) a Steinberg cross-section of the space of regular conjugacy classes. This holds for any ordering of $\simple$ in which $\cR(\theta_1)$ precedes $\cR(\theta_{1\scriptstyle\frac n{n+1}})$.
\end{thm}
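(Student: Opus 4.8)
The plan is to take the Steinberg cross-section attached to $\simple=\cR(\theta_1)\cup\cR(\theta_{1\scriptstyle\frac n{n+1}})$, pull all the Weyl representatives $\n_i$ to the right, and show that what remains is exactly the factored form defining $\cM_{n+1}$ in Definition \ref{def:meven} (resp.\ Definition \ref{def:modd}). Both $\cM_{n+1}$ and $\cR_{n+1}^\simple$ consist of matrices of the shape (product of root-group elements)\,$\times$\,(a fixed Weyl representative): on one side $\hat\Pi$ (resp.\ $\Pi$), on the other the Coxeter element $\n_1\cdots\n_n$. The whole matter therefore reduces to three points --- identifying the two Weyl elements, matching the two collections of root groups, and verifying that both families are swept out by the same $\bC^n$ of parameters --- after which equality of the two subsets of $SL_{n+1}\bC$ follows, and the ``Steinberg cross-section'' interpretation is immediate from Proposition \ref{pro:S2}.

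First I would exploit the bipartite structure. By Proposition \ref{pro:simplerootseven} (resp.\ \ref{pro:simplerootsodd}) and the commuting remark following Definition \ref{def:meven}, the roots inside $\cR(\theta_1)$ are mutually orthogonal, as are those inside $\cR(\theta_{1\scriptstyle\frac n{n+1}})$; hence within each block the $e_i(t_i)$ commute, the $\n_i$ commute, and each $e_i(t_i)$ commutes (up to rescaling its parameter) with the reflections of the other roots of its block. Ordering $\simple$ so that $\cR(\theta_1)$ precedes $\cR(\theta_{1\scriptstyle\frac n{n+1}})$, I can collect the two blocks and write
\[
e_1(t_1)\n_1\cdots e_n(t_n)\n_n = U_A\,\big(\Ad(r_A)\,U_B\big)\,(r_Ar_B),
\]
where $U_A\in\prod_{\alpha\in\cR(\theta_1)}U_\alpha$, $U_B\in\prod_{\beta\in\cR(\theta_{1\scriptstyle\frac n{n+1}})}U_\beta$, and $r_A,r_B$ are the involutions that are the products of the commuting reflections of the two blocks. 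Thus $\Ad(r_A)\,U_B\in\prod_{\beta}U_{r_A\beta}$, and the fixed Weyl part is the bipartite Coxeter element $r_Ar_B$.

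Next come the two identifications, read off from the zig-zag diagrams of Propositions \ref{pro:simplerootseven} and \ref{pro:simplerootsodd}. I would check that $r_A$ carries the tail Stokes roots to the neighbouring ones, $r_A\,\cR(\theta_{1\scriptstyle\frac n{n+1}}) = \cR(\theta_{1\scriptstyle\frac 1{n+1}})$, and that the Coxeter element is the prescribed cyclic matrix, $r_Ar_B=\hat\Pi$ (resp.\ $\Pi$). Here $r_A$ is the product of the transpositions of the pairs $(i,j)$ coming from $\cR(\theta_1)$, and applying it to the pairs of $\cR(\theta_{1\scriptstyle\frac n{n+1}})$ should reproduce precisely the pairs listed for $\cR(\theta_{1\scriptstyle\frac 1{n+1}})$ in Table \ref{t1even} (resp.\ Table \ref{t1odd}), while the product of all the block transpositions is exactly the $(n+1)$-cycle realised by $\hat\Pi$ (resp.\ $\Pi$). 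With these in hand the factored element becomes $U_A\,U_B'\,\hat\Pi$ with $U_A\in\prod_{\alpha\in\cR(\theta_1)}U_\alpha$ and $U_B'=\Ad(r_A)U_B\in\prod_{\gamma\in\cR(\theta_{1\scriptstyle\frac 1{n+1}})}U_\gamma$, which is exactly the form defining $\cM_{n+1}$; since the commuting-factor property makes $U_A$ and $U_B'$ depend freely and bijectively on the parameters (the passage from $(t_i)$ to root-group coefficients is the invertible rescaling $\Ad(r_A)$), the two parametrized families coincide as subsets of $SL_{n+1}\bC$.

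The main obstacle is precisely this pair of combinatorial identifications: verifying $r_A\,\cR(\theta_{1\scriptstyle\frac n{n+1}}) = \cR(\theta_{1\scriptstyle\frac 1{n+1}})$ and $r_Ar_B=\hat\Pi$ uniformly across the sub-cases $m=2c$ and $m=2c+1$ and their odd-rank analogues, together with the bookkeeping of signs and of the precise Steinberg representatives $\n_i$ that is needed so that the Coxeter product reproduces $\hat\Pi$ (which carries an entry $-1$ in the even case) on the nose, rather than merely up to a diagonal torus factor. These are finite checks driven entirely by the explicit tables, but the sign and representative bookkeeping is where the care is required.
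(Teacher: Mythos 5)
Your proposal is correct and follows essentially the same route as the paper's proof: exploit the disjointness of the index pairs within each of $\cR(\theta_1)$ and $\cR(\theta_{1\scriptstyle\frac n{n+1}})$ to collect the root-group factors and reflections block by block, verify from the diagrams that conjugation by the product of the $\cR(\theta_1)$-reflections carries $\cR(\theta_{1\scriptstyle\frac n{n+1}})$ onto $\cR(\theta_{1\scriptstyle\frac 1{n+1}})$, and identify the product of all the reflections with the cyclic permutation matrix $\hat\Pi$ (resp.\ $\Pi$), using the freedom in Proposition \ref{pro:S2} to choose the representatives $\n_i$ so that this holds exactly. The sign/representative bookkeeping you flag as the main obstacle is handled in the paper precisely by that freedom of choice, and your decomposition $U_A\,(\Ad(r_A)U_B)\,(r_Ar_B)$ is the paper's computation written in one line.
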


This will give useful information on the structure of the matrices
$\tMz$ (and, more generally, any
matrices in $\cM_{n+1}$). In particular, we have:

\begin{cor}\label{cor:mstructure}
(i) The entries of the matrix $\tMz$ are given by explicit (polynomial) functions of the coefficients of its characteristic polynomial. (ii) $\tMz$ is real if and only if its characteristic polynomial is real.
\end{cor}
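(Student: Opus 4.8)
The plan is to read everything off from the identification $\tMz\in\cM_{n+1}=\cR_{n+1}^\simple$ supplied by Theorem~\ref{thm:m-is-r}. By Definition~\ref{def:rpi} we may write
\[
\tMz = e_1(t_1)\n_1\cdots e_n(t_n)\n_n, \qquad e_i(t_i)=\exp(t_iE_{\alpha_i}),
\]
for a unique $(t_1,\dots,t_n)\in\bC^n$, where $\simple=\{\alpha_1,\dots,\alpha_n\}$ is the ordered set $\cR(\theta_1)\cup\cR(\theta_{1\scriptstyle\frac n{n+1}})$. The first observation is that this expression is \emph{polynomial} in the parameters: each $E_{\alpha_i}=E_{a,b}$ with $a\ne b$ satisfies $E_{\alpha_i}^2=0$, so $e_i(t_i)=I+t_iE_{\alpha_i}$, and hence
\[
\tMz=(I+t_1E_{\alpha_1})\,\n_1\cdots(I+t_nE_{\alpha_n})\,\n_n
\]
has entries that are polynomials (of multidegree at most one in each variable) in $t_1,\dots,t_n$, with coefficients built from the fixed integer matrices $E_{\alpha_i}$ and the Weyl representatives $\n_i$.

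For part~(i) I would then invoke the defining property of the Steinberg cross-section in Proposition~\ref{pro:S2}, namely $\chi\circ s=\mathrm{id}_{\bC^n}$, which forces $t_i=\chi_i(\tMz)$: the parameters are precisely the values at $\tMz$ of the characters of the basic representations. For $G=SL_{n+1}\bC$ the basic representations are $\Lambda^i\bC^{n+1}$, so $\chi_i(\tMz)=\Tr\big(\Lambda^i\tMz\big)$ is the $i$-th elementary symmetric function of the eigenvalues of $\tMz$, i.e.\ $(-1)^i$ times the coefficient of $\lambda^{n+1-i}$ in $\det(\lambda I-\tMz)$. Substituting $t_i=(-1)^i(\text{$i$-th characteristic coefficient})$ into the polynomial formula above then expresses every entry of $\tMz$ as an explicit polynomial in the coefficients of its own characteristic polynomial, which is part~(i).

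Part~(ii) follows from the same formula once one checks that its coefficients are real. One direction is immediate: if $\tMz$ has real entries then $\det(\lambda I-\tMz)$ has real coefficients. For the converse, the matrices $E_{\alpha_i}$ are integral, and the reflection representatives $\n_i$ may be taken integral (for $\alpha_{a,b}$, the matrix agreeing with the identity off the rows and columns $a,b$ and equal to $\left(\begin{smallmatrix}0&1\\-1&0\end{smallmatrix}\right)$ there, which lies in $SL_{n+1}\bC$); hence $s$ is a polynomial map with real coefficients. Since the characters $\chi_i$ are $\pm1$ times the characteristic coefficients, a real characteristic polynomial forces $(t_1,\dots,t_n)\in\bR^n$, whence $\tMz=s(t)$ is real.

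The only genuine verifications are bookkeeping: the standard dictionary identifying the fundamental characters of $SL_{n+1}\bC$ with the characteristic coefficients, and the reality of the representatives $\n_i$. The latter is where the integrality of the $E_{\alpha_i}$ and of $\hat\Pi$ (resp.\ $\Pi$) enters, together with the commutativity within $\cR(\theta_1)$ and within $\cR(\theta_{1\scriptstyle\frac1{n+1}})$ noted after Definition~\ref{def:meven}, which lets one pass between the interleaved Steinberg form $s(t)$ and the factored form defining $\cM_{n+1}$. I expect the main point to get right to be exactly this: confirming that a single choice of \emph{real} representatives $\n_i$ realizes the identification of Theorem~\ref{thm:m-is-r}, so that the rearrangement used there does not introduce complex coefficients and spoil the reality statement in~(ii).
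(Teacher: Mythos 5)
Your proposal is correct and follows essentially the same route as the paper's proof: write $\tMz=s(\chi(\tMz))$ via the Steinberg cross-section of Proposition \ref{pro:S2}, identify $\chi$ with the coefficients of the characteristic polynomial, and deduce (ii) from the fact that all $e_i,\n_i$ may be chosen real. The point you flag at the end --- that a single choice of \emph{real} representatives $\n_i$ must be compatible with the rearrangement in Theorem \ref{thm:m-is-r} (so that $\prod\n_\alpha$ equals $\hat\Pi$, resp.\ $\Pi$, on the nose) --- is exactly the content of the paper's terse remark that the $\n_i$ may be chosen real, and is verified explicitly in the examples of section \ref{examples}.
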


\begin{proof} (i) We have $\tMz= s(\chi(\tMz))$, where
$s(t_1,\dots,t_n)= e_1(t_1)\n_1\dots e_n(t_n)\n_n$ as in Proposition \ref{pro:S2}.
The map $\chi:SL_{n+1}\bC \to \bC^n$ gives the coefficients of the characteristic polynomial.
(ii) This follows from (i) and the fact that all $e_i,\n_i$ may be chosen to be real.
\end{proof}

\subsection{Proof of Theorem \ref{thm:m-is-r} in the case $n+1=2m$}
$  $

\begin{proof}  Recall that
$\cM_{n+1}=
\{
(\underset{\alpha}\Pi\,U_\alpha)(\underset{\beta}\Pi\,U_\beta)
\hat\Pi \ \vert\ \alpha\in\cR(\theta_1),\
\beta\in\cR(\theta_{1\scriptstyle\frac 1{n+1}})
\}
$
(Definition \ref{def:meven}). We shall produce this set from
\[
\cR_{n+1}^\simple=
\textstyle\prod_{\alpha\in \cR(\theta_1)\cup\cR(\theta_{1\frac n{n+1}})}
e_\alpha \n_\alpha
\]
by \lq\lq moving all the $\n_\alpha$ to the right\rq\rq.  Here, for brevity, $e_\alpha$ denotes $e_\alpha(\bC)=U_\alpha$.

It is well known that $\n_{\alpha_{i,j}}$ can be chosen so that
\[
\n_{\alpha_{i,j}} \exp \bC E_{k,l} \ \n_{\alpha_{i,j}}^{-1} =
\exp \bC E_{p(k),p(l)}
\]
where $p$ is the permutation which interchanges $i$ and $j$.

First we consider $\cR(\theta_1)$.  From Proposition \ref{pro:simplerootseven}  we have
\[
\textstyle
\prod_{\alpha\in \cR(\theta_1)} e_\alpha \n_\alpha =
\prod_{\alpha\in \cR(\theta_1)} e_\alpha
\prod_{\alpha\in \cR(\theta_1)} \n_\alpha
\]
as, for any
$
\begin{smallmatrix}
b_1 &  \\  & a_1
\end{smallmatrix}
$,
$
\begin{smallmatrix}
b_2 &  \\  & a_2
\end{smallmatrix}
$
(representing elements of $\cR(\theta_1)$) either
$(a_1,b_1)=(a_2,b_2)$ or $\{a_1,b_1\}\cap\{a_2,b_2\}=\emptyset$
(cf.\ the remark after Definition \ref{def:meven}).
Hence $\n_{\alpha_1} e_{\alpha_2}  = e_{\alpha_2}  \n_{\alpha_1}$ for any
$\alpha_1,\alpha_2\in\cR(\theta_1)$.
The same argument applies to
$\cR(\theta_{1\scriptstyle\frac n{n+1}})$, so
\[
\textstyle
\prod_{\alpha\in \cR(\theta_{1\frac n{n+1}})} e_\alpha \n_\alpha =
\prod_{\alpha\in \cR(\theta_{1\frac n{n+1}})} e_\alpha
\prod_{\alpha\in \cR(\theta_{1\frac n{n+1}})} \n_\alpha
\]
as well.
Next we examine
\[
\textstyle
\n_\ast \,e_\beta = \n_\ast e_\beta\,  \n_\ast^{-1} \  \n_\ast, \quad
\n_\ast= \prod_{\alpha\in \cR(\theta_1)} \n_\alpha
\]
for each $\beta\in \cR(\theta_{1\scriptstyle\frac n{n+1}})$.

The effect of conjugation by $\n_\ast$ is to perform all reflections
$
\begin{smallmatrix}
b &  \\  & a
\end{smallmatrix}
\mapsto
\begin{smallmatrix}
a &  \\  & b
\end{smallmatrix}.
$
In the case $m=2c$,
this converts the first diagram in Proposition \ref{pro:simplerootseven} to

\begin{center}
\begin{tabular}{cccccccccccccc}
$3c\!-\!1$ & &$3c\!-\!2$ &$\ \dots\ $ & &$2c$ & & $2c\!-\!1$& &$2c\!-\!2$ &$\ \dots\ $ & &$c$ &
\\
 & $3c$ & &$\ \dots\ $ &$4c\!-\!2$ & &$4c\!-\!1$ & &$0$ & &$\ \dots\ $ &$c\!-\!2$ & & $c\!-\!1$
\end{tabular}
\end{center}

This means that the image of the roots $\cR(\theta_{1\scriptstyle\frac n{n+1}})$ (or root spaces) under conjugation by $\n_\ast$ is given by those $\alpha_{i,j}$ with
\[
(i,j)=(3c-2,3c),\dots,(2c,4c-2),(2c-1,4c-1),(2c-2,0),\dots,(c,c-2)
\]
and this is precisely $\cR(\theta_{1\scriptstyle\frac 1{n+1}})$.
In the case $m=2c+1$,  a similar argument applies to the second
diagram in Proposition \ref{pro:simplerootseven}.

We conclude that
\[
\textstyle
\prod_{\alpha\in \cR(\theta_1)\cup\cR(\theta_{1\frac n{n+1}})} e_\alpha \n_\alpha
=
\prod_{\alpha\in \cR(\theta_1)} e_\alpha
\ \
\prod_{\alpha\in \cR(\theta_{1\frac 1{n+1}})} e_\alpha
\ \
\prod_{\alpha\in \cR(\theta_1)\cup\cR(\theta_{1\frac n{n+1}})}\n_\alpha.
\]
The effect of conjugation by $\prod_{\alpha\in \cR(\theta_1)\cup\cR(\theta_{1\frac n{n+1}})}\n_\alpha$ is the cyclic permutation $(0\,1\,\cdots\,n)$, which may be represented by the matrix $\hat\Pi$.  Hence
$\prod_{\alpha\in \cR(\theta_1)\cup\cR(\theta_{1\frac n{n+1}})} e_\alpha \n_\alpha=\cM_{n+1}$.
\end{proof}

\subsection{Proof of Theorem \ref{thm:m-is-r} in the case $n+1=2m+1$}
$  $

This is exactly the same as the proof in the case $n+1=2m$, but using Proposition \ref{pro:simplerootsodd} instead of Proposition \ref{pro:simplerootseven}.  The key step is to see the
effect of conjugation by $\n_\ast$, i.e.\ of applying all reflections
$
\begin{smallmatrix}
b &  \\  & a
\end{smallmatrix}
\mapsto
\begin{smallmatrix}
a &  \\  & b
\end{smallmatrix}
$
to the diagrams in Proposition \ref{pro:simplerootsodd}. In the case $m=2c$, with the first diagram, we obtain

\begin{center}
\begin{tabular}{cccccccccccccc}
$3c$ & &$3c\!-\!1$ &$\ \dots\ $ & &$2c\!+\!1$ & & $2c$& &$2c\!-\!1$ &$\ \dots\ $ & $c\!+\!1$& &$c$
\\
 & $3c\!+\!1$ & &$\ \dots\ $ &$4c\!-\!1$ & &$4c$ & &$0$ & &$\ \dots\ $ & & $c\!-\!1$&
\end{tabular}
\end{center}

This means that the image of the roots $\cR(\theta_{1\scriptstyle\frac n{n+1}})$  under conjugation by $\n_\ast$ is given by those $\alpha_{i,j}$ with
\[
(i,j)=(3c-1,3c+1),\dots,(2c+1,4c-1),(2c,4c),(2c-1,0),\dots,(c,c-1)
\]
and this is precisely $\cR(\theta_{1\scriptstyle\frac 1{n+1}})$. The case $m=2c+1$,  with the second diagram, is similar. We conclude again that
$\prod_{\alpha\in \cR(\theta_1)\cup\cR(\theta_{1\frac n{n+1}})} e_\alpha \n_\alpha=\cM_{n+1}$.

\subsection{A lemma on orbits of the maximal compact subgroup}
$  $

We shall need the following lemma, which may be well known, but for lack of a suitable reference we give the proof.

\begin{lm}\label{lm:AdGK}
Let $K$ be a maximal compact subgroup of $G$. Let $\fg=\fk+\fp$ be the Cartan decomposition of the Lie algebra $\fg$ of $G$, where $\fk$ is the Lie algebra of $K$.
Denote by $\cC_k^G$ the $G$-orbit of an element $k\in K$ under conjugation,
and by $\cC_k^K$ the $K$-orbit.
Then
$\cC_k^G\cap K=\cC_k^K$.
\end{lm}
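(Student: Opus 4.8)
The inclusion $\cC_k^K\subseteq \cC_k^G\cap K$ is immediate, since $K\subseteq G$ and conjugation by $K$ keeps $k$ inside $K$. The substance of the lemma is the reverse inclusion: if $g\in G$ satisfies $gkg^{-1}\in K$, then $gkg^{-1}$ is already conjugate to $k$ by an element of $K$ alone. The plan is to exploit the Cartan (polar) decomposition $G=K\exp(\fp)$, writing $g=u\exp(\xi)$ with $u\in K$ and $\xi\in\fp$. Replacing $k$ by $u^{-1}(gkg^{-1})u$ if necessary — which only changes things by a $K$-conjugation and hence is harmless — I would reduce to the following clean statement: if $p=\exp(\xi)$ with $\xi\in\fp$ and $pkp^{-1}\in K$, then $pkp^{-1}$ is $K$-conjugate to $k$; in fact I expect to show $pkp^{-1}=k$, i.e.\ that such a $\xi$ must centralize $k$.

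To make this precise I would use a Cartan involution $\Theta$ on $G$ with fixed-point set $K$ and differential $\theta$ on $\fg$, so that $\fk,\fp$ are the $(+1),(-1)$-eigenspaces of $\theta$ and $\Theta(\exp\xi)=\exp(\theta\xi)=\exp(-\xi)=p^{-1}$ for $\xi\in\fp$. Set $h=pkp^{-1}$ and assume $h\in K$, so $\Theta(h)=h$. Applying $\Theta$ to the relation $h=pkp^{-1}$ gives $h=\Theta(p)\,\Theta(k)\,\Theta(p)^{-1}=p^{-1}k p$, using $k\in K$ so $\Theta(k)=k$. Comparing $pkp^{-1}=p^{-1}kp$ yields $p^2 k = k p^2$, i.e.\ $p^2=\exp(2\xi)$ commutes with $k$. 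From here I would pass to the identity component of the centralizer $Z_G(k)$, or more cleanly argue that $\exp(2\xi)\in Z_G(k)$ forces $\exp(\xi)\in Z_G(k)$ because $\fp$ consists of semisimple elements and the exponential is injective on $\fp$: the condition $\mathrm{Ad}(k)$ fixes $\exp(2\xi)$ together with self-adjointness of $\mathrm{ad}(\xi)$ (with respect to a $\Theta$-compatible inner product) lets one deduce that $\mathrm{Ad}(k)\xi=\xi$, hence $\exp(\xi)$ itself commutes with $k$. Then $h=pkp^{-1}=k$, giving $h\in\cC_k^K$ trivially.

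The main obstacle is the last deduction — passing from \emph{$p^2$ commutes with $k$} to \emph{$p$ commutes with $k$}. The honest way to handle it is to choose an inner product on $V$ (recall $G\subseteq GL(V)$) that is $K$-invariant, so that elements of $K$ act by unitary transformations and elements of $\exp(\fp)$ by positive-definite self-adjoint ones; this is exactly the data encoded by the Cartan involution. Then $h=pkp^{-1}\in K$ unitary and $p$ positive self-adjoint, and the uniqueness of the polar decomposition of the operator $g k$ (or a direct spectral argument: $\mathrm{Ad}(k)$ permutes the eigenspaces of the positive operator $p$, and $pkp^{-1}=p^{-1}kp$ forces it to preserve each eigenspace) yields $pk=kp$. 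I would present this spectral/polar-uniqueness argument as the crux, since it is where the compactness of $K$ and the positivity supplied by $\fp$ genuinely enter; everything else is formal manipulation with $\Theta$.
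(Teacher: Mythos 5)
Your proposal is correct and follows essentially the same route as the paper's proof: both write $g=u\exp\xi$ via the polar (Cartan) decomposition, apply the global Cartan involution $\Theta$ to the relation $pkp^{-1}\in K$ to deduce $pkp^{-1}=p^{-1}kp$, hence that $p^2=\exp(2\xi)$ commutes with $k$, and then descend from $p^2$ to $p$. The only difference is in this last step's implementation — the paper notes that $\Ad_k$ preserves $\fp$ and that $\exp$ is injective on $\fp$, so $\Ad_k(2\xi)=2\xi$, while you use the equivalent uniqueness-of-positive-square-root (spectral) argument for operators on $V$ with a $K$-invariant inner product — and both finishes are valid.
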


\begin{proof}
Recall first that any element $g$ in $G$ can be written as $g=k\exp X$ where $k\in K$ and $X\in \fp$, since the multiplication map $K\times \exp \fp \to G$ is a diffeomorphism
(p.\  384 of \cite{Kn}).  To prove that $\cC_k^G\cap K=\cC_k^K$,  it suffices to show that if $gk_1g^{-1}=k_2$ for $k_1,k_2\in K$ and $g\in G$, then there exists $h\in K$ such that $hk_1h^{-1}=k_2$.

Writing $g=k\exp X$ for some $k\in K$ and $X\in\fp$, we have $\exp X k_1 \exp (-X)= k^{-1} k_2 k \in K$. Let $\Theta$ be the global Cartan involution on the group (p.\ 387 of \cite{Kn}). Then $\Theta(\exp(X)k_1\exp(-X))=\exp(-X)k_1\exp(X)$.
As this lies in $K$, it is fixed by $\Theta$, thus $\exp(X) k_1 \exp(-X)=\exp(-X)k_1\exp(X)$. In other words, $\exp(-2X)k_1\exp(2X)=k_1$, and so $k_1 \exp(2X) k_1^{-1}=\exp (2X)\in\exp \fp$.

The restriction of the diffeomorphism $K\times\exp \fp\to G$ is a diffeomorphism from $\fp$ onto $\exp\fp$.
Thus, $\Ad_{k_1}(2X)\in \fp$  since $\exp (\Ad_{k_1} (2X))=k_1\exp(2X) k_1^{-1}\in\exp\fp$.  Moreover, $2X=\Ad_{k_1}(2X)$ since exp is injective on $\fp$. This
implies $\exp(\Ad_{k_1}X)=\exp X$ and thus $k_2=k(\exp X k_1 \exp (-X))k^{-1}=k(k_1)k^{-1}$ where $k\in K$.
\end{proof}

This implies that the set of $G$-conjugacy classes of
$\{ gkg^{-1} \ \vert\ g\in G,k\in K\}$ can be identified with the set of $K$-conjugacy classes of $K$.  It is well known that the latter can be identified with a Weyl alcove of $K$.

\section{Solutions of the  tt$^*$-Toda equations}\label{localsolutions}

From now on we shall consider those matrices $\tMz$ which arise from solutions $w_i$ (of the $tt^*$-Toda equations) defined on intervals of the form $(0,\epsilon)$, such that $w_i=w_i(|t|)$ and $2w_i\sim \gamma_i\log |t|$ as $t\rightarrow 0$.
(Thus the global solutions are the solutions of this type with $\epsilon=\infty$.)
We refer to such solutions as {\em smooth near zero}.

\begin{pro}\label{pro:characterizationbygamma}  (i) For any
solution $w_i$ defined on an interval of the form $(0,\epsilon)$, such that $w_i=w_i(|t|)$ and $2w_i\sim \gamma_i\log |t|$ as $z\rightarrow 0$, we have
$\gamma_{i+1}-\gamma_i\geq -2$ for all $i$. (ii) Conversely, for any $(\gamma_0,\dots,\gamma_n)\in\bR^{n+1}$ such that $\gamma_{i+1}-\gamma_i\geq -2$ for all $i$, there exists a solution $w_i$ such that $w_i$ arises as in (i).
\end{pro}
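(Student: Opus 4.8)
The plan is to work throughout with the radial form of (\ref{toda}), namely $(xw_x)_x=2x[W^T,W]$ recorded in Section~\ref{boalch}, written out componentwise. Passing to the variable $s=\log x\in(-\infty,s_0)$, in which $x\tfrac{d}{dx}=\tfrac{d}{ds}$ and hence $(x(w_i)_x)_x=\tfrac1x\ddot w_i$ (with $\dot{}=d/ds$), equation (\ref{toda}) becomes $\ddot w_i=2e^{2s}\bigl(e^{2(w_i-w_{i-1})}-e^{2(w_{i+1}-w_i)}\bigr)$. Introducing the gaps $g_i=w_{i+1}-w_i$ and the numbers $\delta_i=\tfrac12(\gamma_{i+1}-\gamma_i)$, the hypothesis $2w_i\sim\gamma_i\log|t|$ reads $g_i/s\to\delta_i$ as $s\to-\infty$, and the gaps satisfy
\[
\ddot g_i=2e^{2s}\bigl(2e^{2g_i}-e^{2g_{i+1}}-e^{2g_{i-1}}\bigr),\qquad i\in\bZ/(n+1).
\]
The claim $\gamma_{i+1}-\gamma_i\ge-2$ is exactly $\delta_i\ge-1$, i.e. the statement that no $e^{2s}e^{2g_i}$ grows as $s\to-\infty$.

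For part (i) I would argue by contradiction, assuming $\delta_i<-1$ for some $i$; the borderline value $\delta_i=-1$ is permitted by the assertion and needs no argument. Choose $i_0$ with $\delta_{i_0}=\min_i\delta_i<-1$. Since $g_j/s\to\delta_j$, the factor $e^{2s}e^{2g_{i_0}}$ behaves like $e^{2(1+\delta_{i_0})s+o(s)}$, whose exponent tends to $+\infty$, while the neighbouring terms $e^{2s}e^{2g_{i_0\pm1}}$ are of smaller order because $\delta_{i_0\pm1}\ge\delta_{i_0}$. Hence the positive contribution dominates and $\ddot g_{i_0}\to+\infty$. Then $g_{i_0}$ is eventually convex with $\ddot g_{i_0}\ge c>0$ for $s$ sufficiently negative, which forces $g_{i_0}(s)\to+\infty$ and therefore $g_{i_0}(s)/s\to-\infty$, contradicting $g_{i_0}/s\to\delta_{i_0}$ (finite). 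The only delicate point is the degenerate case where several consecutive gaps attain the minimum: there the leading exponentials tie and the $o(s)$ corrections matter, so one instead sums the gap equation over the maximal block of indices realizing the minimum, arranging that the positive contributions again dominate. The precise asymptotic estimates underlying this step are those established in \cite{GL}\cite{GIL1}\cite{GIL2}\cite{M1}\cite{M2}.

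For the existence statement (ii) I would build a local radial solution with the prescribed singular behaviour by a perturbative fixed-point argument. Writing $w_i(s)=\tfrac12\gamma_i s+v_i(s)$, the potential terms become $2e^{2(1+\delta_i)s}e^{2(v_{i+1}-v_i)}$, and under the hypothesis $\delta_i\ge-1$ every prefactor $e^{2(1+\delta_i)s}$ stays bounded as $s\to-\infty$; the system can then be recast as an integral equation for $(v_0,\dots,v_n)$ and solved by contraction on a weighted Banach space over a half-line $(-\infty,s_0]$ with $s_0$ sufficiently negative, yielding a solution on some $(0,\epsilon)$ with $g_i/s\to\delta_i$, i.e. $2w_i\sim\gamma_i\log|t|$. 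I expect the main obstacle to lie in the borderline directions $\gamma_{i+1}-\gamma_i=-2$, where the prefactor $e^{2(1+\delta_i)s}$ is merely constant rather than decaying, so the linearized problem is only marginally contractive and one must track possible $\log$-type corrections to the leading asymptotics in order to confirm $g_i/s\to\delta_i$. This marginal analysis is precisely the content of the asymptotic results of \cite{GL}\cite{GIL1}\cite{GIL2}\cite{M1}\cite{M2}, on which the proof relies; alternatively, one may realize the construction through the Riemann--Hilbert correspondence of \cite{B_adv}\cite{B_duke}, prescribing a monodromy datum $\tMz\in\cM_{n+1}$ (Definitions \ref{def:meven}, \ref{def:modd}) whose spectral data encode the exponents $\gamma_i$ and invoking isomonodromy to produce the corresponding local solution near $t=0$.
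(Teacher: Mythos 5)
Your proposal is correct in outline but takes a genuinely different route from the paper, whose proof is essentially a citation: part (i) is quoted as the elementary result of \cite{GL}, and part (ii) is deduced either from Theorem \ref{thm:convex} or from the purely local Iwasawa factorization method of \cite{GIL3}. For (i) you supply the elementary argument itself, and it works. The degenerate case you flag can be closed exactly as you suggest: summing the gap equations over a maximal block $I=\{a,\dots,b\}$ of indices realizing $\delta_{\min}<-1$ telescopes to
\[
\frac{d^2}{ds^2}\sum_{i\in I}g_i = 2e^{2s}\bigl(e^{2g_a}+e^{2g_b}-e^{2g_{a-1}}-e^{2g_{b+1}}\bigr),
\]
and the two negative terms carry the strictly larger exponents $\delta_{a-1},\delta_{b+1}>\delta_{\min}$ by maximality of the block (such neighbours exist: the block is proper, since $\sum_i\delta_i=0$ by periodicity while $\delta_{\min}<-1$), so they are subdominant and your quadratic-growth contradiction applies to $\sum_{i\in I}g_i$. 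You should state this telescoping explicitly rather than leaving ``arranging that the positive contributions again dominate'' vague---it is the entire content of the degenerate case, and it needs no input beyond $g_i/s\to\delta_i$.

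Part (ii) is where your treatment has a gap relative to the paper's. Your contraction scheme is fine when every inequality is strict, but in the borderline directions $\gamma_{i+1}-\gamma_i=-2$ it genuinely breaks down, as you concede, and your patch is not accurate as stated: the cited references do not complete a marginally contractive fixed-point scheme; they prove boundary existence by entirely different methods (PDE sub/supersolution arguments in \cite{GL}, Riemann--Hilbert analysis in \cite{GIL1}\cite{GIL2}, harmonic bundles in \cite{M1}\cite{M2}). The paper's own route avoids the issue: Theorem \ref{thm:convex} already supplies, for each $\gamma$ in the closed region, a global solution with the stated asymptotics, whose restriction to $(0,\epsilon)$ is the required local solution; alternatively \cite{GIL3} gives a purely local construction. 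Two smaller points: part (ii) implicitly assumes $\gamma_{n-i}+\gamma_i=0$ (solutions of the tt$^*$-Toda equations satisfy $w_{n-i}+w_i=0$ pointwise, so no solution exists otherwise), and any fixed-point construction must be run in the space of antisymmetric tuples $v_{n-i}+v_i=0$---which your system preserves---for its output to solve the equations as defined in section \ref{tt*toda}.
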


\begin{proof}   (i) is elementary \cite{GL}, and (ii) follows from
Theorem \ref{thm:convex}.  Alternatively, without appealing to the result on global solutions, (ii) follows from the Iwasawa factorization method of \cite{GIL3}, which is purely local.
\end{proof}

\begin{df}\label{def:m0}
We denote by $\cM_{n+1}^{\text{\em zero}}$ the subset of $\cM_{n+1}$ consisting of matrices $\tMz$ arising from solutions $w_i$ of the $tt^*$-Toda equations which are smooth near zero.
\end{df}

For such solutions, we have the following result, which will be proved in Section \ref{app} (an equivalent result can be found in \cite{M1},\cite{M2}).

\begin{thm} \label{thm:eigenvalues}  If $\tMz\in \cM_{n+1}^{\text{\em zero}}$, then its eigenvalues are in the unit circle.  More precisely, if $n+1=2m$, then the eigenvalues of
$\tMz$ are
\[
\text{
$e^{\pm\frac{\pi \i}{n+1}(\gamma_0 +1)}$,
$e^{\pm\frac{\pi\i}{n+1}(\gamma_1 +3)}$,
$\dots$,
$e^{\pm\frac{\pi\i}{n+1}(\gamma_{m-1} +  2m-1)}$
}
\]
and, if $n+1=2m+1$, they are
\[
\text{
$e^{\pm\frac{\pi \i}{n+1}(\gamma_0 -n)}$,
$e^{\pm\frac{\pi\i}{n+1}(\gamma_1 -n+2)}$,
$\dots$,
$e^{\pm\frac{\pi\i}{n+1}(\gamma_{m-2} -4)}$,
$e^{\pm\frac{\pi\i}{n+1}(\gamma_{m-1} -2)}$,
$1$.
}
\]
\end{thm}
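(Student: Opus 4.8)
The plan is to exploit isomonodromy. Since $w_i=w_i(|t|)$ solves the radial Toda equations, the family $d+\hat\alpha$ is isomonodromic, so the conjugacy class of $\tMz$ --- and in particular its spectrum --- is independent of $x=|t|$. I would therefore compute the eigenvalues in the limit $x\to 0$, where the hypothesis $2w_i\sim\gamma_i\log|t|$ gives explicit control. First I would pass to the gauge $\tilde\Psi=e^{w}\Psi$ in (\ref{meromorphicsystem}); since $e^{w}We^{-w}=\Pi$ and $e^{w}$ commutes with the diagonal factor $xw_x$, the system becomes
\[
\tilde\Psi_\zeta=\Big(-\tfrac1{\zeta^2}\Pi-\tfrac1\zeta\,xw_x+x^2e^{w}W^Te^{-w}\Big)\tilde\Psi .
\]
From $2w_i\sim\gamma_i\log|t|$ one gets $xw_x\to\tfrac12\diag(\gamma_0,\dots,\gamma_n)$, while the subdiagonal entries of $x^2e^{w}W^Te^{-w}$ are comparable to $x^{\,2+\gamma_i-\gamma_{i-1}}$, which tend to $0$ on the open region where $\gamma_{i+1}-\gamma_i>-2$. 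Thus on that region the family converges, locally uniformly on $\bC^\ast$, to the explicitly solvable model connection $\tilde\Psi_\zeta=\big(-\tfrac1{\zeta^2}\Pi-\tfrac1{2\zeta}\diag(\gamma_0,\dots,\gamma_n)\big)\tilde\Psi$, whose monodromy must coincide with the ($x$-independent) monodromy of the original family.

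Next I would read off the eigenvalues from this model. Setting $\eta=1/\zeta$, at $\zeta=\infty$ the model has a \emph{regular} singularity with residue $\tfrac12\diag(\gamma_0,\dots,\gamma_n)$; since the $\gamma_i$ are real, the local monodromy there has eigenvalues $e^{\pi\ii\gamma_i}$, all on the unit circle. As $0$ and $\infty$ are the only singularities, the monodromy of $\tPsiz_1$ around $|\zeta|=1$ is its inverse, with eigenvalues $e^{-\pi\ii\gamma_i}$ --- equivalently, using $\gamma_{n-i}=-\gamma_i$, the set $\{e^{\pi\ii\gamma_i}\}$. By the monodromy formula of Section \ref{boalch}, this monodromy equals $(\tMz\,\omega^{\frac12})^{n+1}$ when $n+1=2m$ and $(\tMz)^{n+1}$ when $n+1=2m+1$. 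Hence every eigenvalue $\nu$ of $\tMz$ satisfies $\nu^{n+1}\in\{e^{\pm\pi\ii\gamma_i}\}$ up to the fixed scalar $\omega^{\pm(n+1)/2}=\mp1$, so $|\nu|=1$. This already establishes the first assertion (the spectrum lies on the unit circle) and shows each $\nu$ is one of the $(n+1)$-st roots $e^{\frac{\pi\ii}{n+1}(\pm\gamma_i+2k)}$, $k\in\bZ$.

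It remains to select the correct root, i.e.\ the shifts $2j+1$ (even case) and $-n+2j$ (odd case). Here I would argue by continuity in $\gamma$ anchored at one base point. Because $\tMz\in\cM_{n+1}=\cR_{n+1}^\simple$ lies in Steinberg's cross-section it is always \emph{regular}, its characteristic polynomial depends continuously (indeed polynomially, by Corollary \ref{cor:mstructure}) on the solution, and the parameter region is convex, hence connected. At the symmetric point $\gamma=0$ the residue at $\infty$ vanishes, so the model connection $\tilde\Psi_\zeta=-\tfrac1{\zeta^2}\Pi\,\tilde\Psi$ has trivial monodromy; thus $(\tMz\,\omega^{\frac12})^{n+1}=I$, forcing $\tMz^{n+1}=-I$ (even) resp.\ $\tMz^{n+1}=I$ (odd). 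Combined with regularity, this makes $\tMz$ semisimple with distinct eigenvalues, pinning them at $\gamma=0$ to be exactly the $(n+1)$-st roots of $-1$, resp.\ of $1$ --- precisely the claimed values with $\gamma=0$. Tracking each eigenvalue continuously as $\gamma$ moves through the region, while maintaining the constraint $\nu^{n+1}=e^{\pi\ii\gamma_j}$ from the previous paragraph, then forces $\nu=e^{\frac{\pi\ii}{n+1}(\gamma_j+2j+1)}$ together with its conjugate in the even case, and the stated analogue in the odd case. (Equivalently, the half-integer phases $\diag(1,\omega^{\frac12},\dots)$ built into $\tilde P_0$ are exactly what produce these base values, so the shifts can also be extracted directly from the cyclic relation $d_{n+1}^{-1}\tPsiz_1(\omega\zeta)=\tPsiz_1(\zeta)\,\tMz\,\omega^{\frac12}$ applied to the formal solution.)

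The main obstacle is the justification of the singular limit. Two points need care. On the boundary faces $\gamma_{i+1}-\gamma_i=-2$ the offending off-diagonal entries no longer vanish but tend to constants, so the clean model connection is not the literal limit; I would sidestep this by proving the eigenvalue formula on the open interior and extending to the closed region by continuity of the monodromy data in the boundary data. Secondly, at parameter values where two of the exponents $\gamma_j+(2j+1)$ coincide modulo $2(n+1)$ the eigenvalues collide (and $\tMz$ becomes regular but non-semisimple); there the continuous tracking is still valid because the characteristic polynomial varies continuously and the asserted roots vary continuously, so the formula persists by density. Verifying that the isomonodromic monodromy genuinely equals that of the locally-uniform limit, and that the dependence of $\tMz$ on $\gamma$ is continuous up to the boundary, are the technical heart of the argument.
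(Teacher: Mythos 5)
Your strategy proves less than the theorem asserts, and the shortfall is not merely technical. Everything you extract from the isomonodromic limit concerns the full monodromy: you learn the multiset $\{\mu^{n+1} \st \mu\in\mathrm{spec}\,\tMz\}$ (equal to $\{-e^{\pm\pi\i\gamma_j}\}$ when $n+1=2m$), hence $\vert\mu\vert=1$. That part is plausible (modulo two points below). But the passage to the individual eigenvalues of $\tMz$ by ``continuity anchored at $\gamma=0$'' does not work, because $\gamma=0$ is the worst possible anchor: there \emph{all} the target values $e^{\pm\pi\i\gamma_j}$ collapse to $1$, so the matching between eigenvalue branches of $\tMz$ and the indices $j$ (equivalently, the shifts $2j+1$) cannot be initialized, and paths from $0$ must repeatedly meet the resonance walls $\gamma_i\pm\gamma_j\in2\bZ$ where the matching can consistently switch. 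Concretely, for $n+1=4$ the ``swapped'' family $\{e^{\pm\frac{\pi\i}{4}(\gamma_1+1)},\,e^{\pm\frac{\pi\i}{4}(\gamma_0+3)}\}$ is continuous in $\gamma$, agrees with the true spectrum at $\gamma=0$, has exactly the same multiset of fourth powers $\{-e^{\pm\pi\i\gamma_0},-e^{\pm\pi\i\gamma_1}\}$, and is closed under complex conjugation (so even reality and palindromicity --- which in this paper are themselves \emph{consequences} of the theorem, hence unavailable to you --- would not exclude it); yet it is not the asserted spectrum. So continuity plus the power constraint is logically insufficient: some exact, pointwise identity for $\tMz$ itself is needed. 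Two further items you flag but do not resolve are genuinely unproven: continuity of $\gamma\mapsto\tMz$ on the closed region (an inverse-monodromy continuity statement, nowhere established in the paper), and the step from $2w_i\sim\gamma_i\log\vert t\vert$ to $xw_x\to\tfrac12\diag(\gamma_0,\dots,\gamma_n)$, which requires asymptotics of derivatives, not just of the $w_i$.

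The paper supplies exactly the missing pointwise identity, and your limiting model is morally the right object --- but it is used differently. Instead of a limit, the paper takes the connection $\hat\omega$ of [GIL3], which is \emph{exactly} gauge-related to $\hat\alpha$ by loop-group Iwasawa factorization (so it has the same Stokes factors $\Qz_k$) and has a regular singular point at $\lambda=\infty$ with residue $m=-\tfrac12\diag(\gamma_0,\dots,\gamma_n)$. The decisive step is that the cyclic symmetry is applied at \emph{both} poles: at the irregular pole it gives $d_{n+1}^{-1}\Phiz_k(\omega\lambda)\Pi^{-1}=\Phiz_k(\lambda)\,\Qz_k\Qz_{k\frac1{n+1}}$, and at the regular pole it gives $d_{n+1}^{-1}\Phii(\omega\lambda)\,d_{n+1}\omega^{-m}=\Phii(\lambda)$. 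Since $\Phii=\Phiz_kD_k$ with $D_k$ constant, comparing the two yields $\Qz_1\Qz_{1\frac1{n+1}}\Pi=D_k\,d_{n+1}^{-1}\omega^{m}D_k^{-1}$: the matrix $\tMz$ itself, not its $(n+1)$-th power, is conjugate to an explicit diagonal matrix, which pins each eigenvalue together with its shift, resonance contributing only a nilpotent factor that leaves eigenvalues unchanged. Your closing parenthetical gestures at the cyclic relation, but applied to the formal solution at the irregular pole it only reproduces the normalization you already have; the leverage comes from the cyclic relation at the regular pole transported through the constant connection matrix. Repairing your proof along these lines is possible, but it amounts to reproducing the paper's argument.
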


From section \ref{steinberg}
we know that $\cM_{n+1}^{\text{zero}}$ is contained in
$(\cM_{n+1}=)\ \cR_{n+1}^\simple$.
From Theorem \ref{thm:eigenvalues} we see that $\cM_{n+1}^{\text{zero}}$ is contained in the smaller subspace
\[
\cR_{n+1}^{\simple, c,p} = \{ X\in \cR_{n+1}^{\simple} \ \vert\ X \text{ satisfies (c) and (p) }\}
\]
where the conditions on $X$ are

(c) the eigenvalues of $X$ are in the unit circle

(p) if $x$ is an eigenvalue of $X$, then so is $1/x$.

\noindent We regard condition (c) as a compactness or \lq\lq unitary\rq\rq\ condition (on the eigenvalues).  Condition (p) means that the characteristic polynomial of $X$ is palindromic in the case $n+1=2m$ (anti-palindromic in the case $n+1=2m+1$).

\begin{cor}\label{cor:real}
All matrices $\tMz\in\cM_{n+1}^{\text{\em zero}}$ (hence all matrices $\tQz_k$)
are real.
\end{cor}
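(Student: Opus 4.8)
The plan is to first prove that $\tMz$ itself is real, working entirely through its characteristic polynomial, and then to descend to the individual Stokes factors $\tQz_k$. For the first step, note that by Theorem \ref{thm:eigenvalues} every $\tMz\in\cM_{n+1}^{\text{\em zero}}$ satisfies conditions (c) and (p): its eigenvalues lie on the unit circle and are invariant under $x\mapsto 1/x$. Since $|x|=1$ forces $1/x=\bar x$, the two conditions together say exactly that the multiset of eigenvalues is closed under complex conjugation. Hence the characteristic polynomial $\det(xI-\tMz)=\prod_i(x-\lambda_i)$ is fixed when its coefficients are conjugated, i.e.\ it is real. By Corollary \ref{cor:mstructure}(ii) a matrix in $\cR_{n+1}^{\simple}$ is real if and only if its characteristic polynomial is real, so $\tMz$ is real. (Equivalently, one could read off from the explicit eigenvalues in Theorem \ref{thm:eigenvalues} that they come in conjugate pairs $e^{\pm\frac{\pi\i}{n+1}(\gamma_j+\cdots)}$, together with the real eigenvalue $1$ in the odd case.)

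It then remains to pass from $\tMz$ to the factors. First I would reduce to the two matrices $\tQz_1$ and $\tQz_{1\scriptstyle\frac1{n+1}}$: by the cyclic relation $\tQz_{k+\scriptstyle\frac2{n+1}}=\hat\Pi\,\tQz_k\,\hat\Pi^{-1}$ together with $\tQz_{k+1}=((\tQz_k)^T)^{-1}$ (and their analogues with $\Pi$ in the odd case), every $\tQz_k$ is obtained from $\tQz_1$ and $\tQz_{1\scriptstyle\frac1{n+1}}$ by conjugation by the real matrix $\hat\Pi$ (or $\Pi$), transposition, and inversion, all of which preserve reality; since adding $2$ and adding $n+1$ to the index $\ell$ generates every $k\in\tfrac1{n+1}\bZ$ starting from $\ell=0,1$, it suffices to show $\tQz_1$ and $\tQz_{1\scriptstyle\frac1{n+1}}$ are real. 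For this I would read their entries off the real matrix $\tMz\hat\Pi^{-1}=\tQz_1\tQz_{1\scriptstyle\frac1{n+1}}$ (Definition \ref{def:qqpi}). By Lemma \ref{lm:shapeofQeven} (resp.\ \ref{lm:shapeofQodd}) we have $\tQz_1=I+A$ and $\tQz_{1\scriptstyle\frac1{n+1}}=I+B$, unipotent with off-diagonal supports in the disjoint root sets $\cR(\theta_1)$ and $\cR(\theta_{1\scriptstyle\frac1{n+1}})$, so $\tMz\hat\Pi^{-1}=I+A+B+AB$, and the entries of $A$ and $B$ in their distinguished positions are recovered from the corresponding entries of the left-hand side, provided the quadratic term $AB$ does not occupy those positions.

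The main obstacle is precisely this last point. Whenever $E_{i,j}E_{k,l}\neq 0$ with $\alpha_{i,j}\in\cR(\theta_1)$ and $\alpha_{k,l}\in\cR(\theta_{1\scriptstyle\frac1{n+1}})$, the product contributes to the height-two position $(i,l)$, and one must check, from the explicit root data in Proposition \ref{pro:simplerootseven} (resp.\ \ref{pro:simplerootsodd}) and Table \ref{t1even} (resp.\ \ref{t1odd}), that $(i,l)$ lies outside $\cR(\theta_1)\cup\cR(\theta_{1\scriptstyle\frac1{n+1}})$; for small $n$ one sees that $AB$ in fact vanishes. Granting this, every entry of $A$ and of $B$ equals a single entry of the real matrix $\tMz\hat\Pi^{-1}$, so $A$ and $B$, hence $\tQz_1$, $\tQz_{1\scriptstyle\frac1{n+1}}$, and therefore all $\tQz_k$, are real. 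I would emphasize that the genuinely substantive input is Theorem \ref{thm:eigenvalues}, whose proof is deferred to Section \ref{app}; once conditions (c) and (p) are in hand the reality of $\tMz$ is immediate, and the reality of the factors is a bookkeeping matter about the commuting root structure already established in section \ref{boalch}.
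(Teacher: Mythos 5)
Your outline coincides with the paper's own proof: Theorem \ref{thm:eigenvalues} gives conditions (c) and (p), hence a real characteristic polynomial; Corollary \ref{cor:mstructure}(ii) upgrades this to reality of $\tMz$ itself; reality of the two fundamental Stokes factors follows; and the remaining $\tQz_k$ are handled by the symmetries (conjugation by the real matrix $\hat\Pi$, resp.\ $\Pi$, and transpose-inverse), exactly as in the paper. The one place where your argument is not closed is the step you yourself flag as the main obstacle: to read the entries of $A$ and $B$ off the real matrix $\tMz\hat\Pi^{-1}=I+A+B+AB$ you need to know, for \emph{every} $n$, that the support of $AB$ avoids $\cR(\theta_1)\cup\cR(\theta_{1\scriptstyle\frac1{n+1}})$, and you verify this only for small $n$ before writing ``granting this''. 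As written, that is a genuine gap (the paper's own proof asserts ``hence the individual factors are real'' without spelling this point out either, so you have correctly isolated the one nontrivial step).

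The good news is that the missing claim holds in the strongest form, $AB=0$ for all $n$, and there is a uniform argument that avoids inspecting the tables. If $\alpha\in\cR(\theta_1)$ and $\beta\in\cR(\theta_{1\scriptstyle\frac1{n+1}})$, then $\alpha(X)$ and $\beta(X)$ (where $X=-d_{n+1}$ is the leading coefficient) point along the two \emph{adjacent} singular directions $\theta_1$ and $\theta_{1\scriptstyle\frac1{n+1}}$, which differ by $\tfrac{\pi}{n+1}<\pi$; hence $(\alpha+\beta)(X)=\alpha(X)+\beta(X)$ is nonzero and its argument lies strictly between the two, so it lies on no singular direction. Since every root of $\sl_{n+1}\bC$ is supported by some singular direction (all $\gamma(X)$ are nonzero), $\alpha+\beta$ is neither zero nor a root. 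For matrix units this forces the product to vanish: writing $\alpha=\alpha_{i,j}$, $\beta=\alpha_{k,l}$, we have $E_{i,j}E_{k,l}=\delta_{jk}E_{i,l}$, and $j=k$ would make $\alpha+\beta=v_i-v_l$ a root or zero, a contradiction; so $E_{i,j}E_{k,l}=0$. Therefore $\tMz\hat\Pi^{-1}=I+A+B$ with $A$, $B$ supported on disjoint position sets, and every entry of $A$ and $B$ is literally an entry of a real matrix. (Alternatively you could invoke uniqueness of the factorization of a Stokes matrix into Stokes factors: the multiplication map $\sto_{\theta_1}\times\sto_{\theta_{1\scriptstyle\frac1{n+1}}}\to SL_{n+1}\bC$ is injective and each group of Stokes factors is stable under entrywise complex conjugation, so reality of the product forces reality of the factors.) With either insertion your proof is complete and agrees with the paper's.
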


\begin{proof}  Theorem \ref{thm:eigenvalues} shows that the roots of the characteristic polynomial of $\tMz$ are real or come in complex conjugate pairs, hence the coefficients of this polynomial are real.
By Corollary \ref{cor:mstructure}, $\tMz$ itself must be real.  It follows that
$\tQz_1\tQz_{1\scriptstyle\frac1{n+1}}$, hence the individual factors $\tQz_1$, $\tQz_{1\scriptstyle\frac1{n+1}}$, are also real.  All other $\tQz_k$ can be determined from $\tQz_1$, $\tQz_{1\scriptstyle\frac1{n+1}}$ by using the symmetries, and it is clear from these formulae that the $\tQz_k$  are real.
\end{proof}

We remark that Theorem \ref{thm:eigenvalues} (in conjunction with Corollary \ref{cor:mstructure})
also permits the explicit computation of the entries of the matrix $\tMz$ (and the matrices $\tQz_k$) in terms of the data $\gamma_0,\dots,\gamma_n$ of the local solutions.  Some examples will be given in the next section.

Let us look at the structure of $\cM_{n+1}^{\text{zero}}$ in more detail.
First we consider the space $\cR_{n+1}^{\simple, c}$.

\begin{lm}\label{lm:alcove} $\cR_{n+1}^{\simple, c}$ can be identified with (any) Weyl alcove $\fA_{n+1}$ of the maximal compact subgroup $K=SU_{n+1}$ of $G=SL_{n+1}\bC$.
\end{lm}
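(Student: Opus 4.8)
The plan is to read off $\cR_{n+1}^{\simple,c}$ as a set of regular conjugacy classes of $G=SL_{n+1}\bC$ satisfying (c), to transport these to semisimple classes via Proposition \ref{pro:S1}, to recognise the semisimple classes satisfying (c) as exactly those meeting $K=SU_{n+1}$, and finally to invoke Lemma \ref{lm:AdGK} to pass to $K$-conjugacy classes and hence to the alcove. First I would use that $\cR_{n+1}^{\simple}$ is a Steinberg cross-section (Proposition \ref{pro:S2}): every element of $\cR_{n+1}^{\simple}$ is regular, and $X\mapsto \cC_X^G$ is a bijection onto the set $G^{reg}/G$ of regular conjugacy classes, since a regular element is determined up to conjugacy by its characteristic polynomial $\chi(X)$. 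As condition (c) depends only on the eigenvalues of $X$ it is conjugation-invariant, so this bijection restricts to one between $\cR_{n+1}^{\simple,c}$ and the regular conjugacy classes all of whose eigenvalues lie on the unit circle.

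Next I would apply Proposition \ref{pro:S1}, the bijection $G^{reg}/G\cong G^{ss}/G$ given by $g\mapsto g^{ss}$. Since $g$ and $g^{ss}$ share the same eigenvalues, this matches the regular classes satisfying (c) with the semisimple classes satisfying (c). The one genuinely new ingredient is a short linear-algebra observation: a semisimple element of $SL_{n+1}\bC$ satisfies (c) if and only if it is $G$-conjugate to an element of $K$. Indeed, a semisimple element is diagonalisable, a diagonal matrix in $SL_{n+1}\bC$ with unit-modulus entries lies in the maximal torus $T\cap K$ of $SU_{n+1}$, and conversely every element of $K$ is semisimple with eigenvalues on the unit circle. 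Hence the semisimple classes satisfying (c) are precisely the classes in $\{\,gkg^{-1}\mid g\in G,\ k\in K\,\}/G$.

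Finally, Lemma \ref{lm:AdGK} and the remark following it identify $\{\,gkg^{-1}\mid g\in G,\ k\in K\,\}/G$ with the set of $K$-conjugacy classes of $K$, which in turn is identified with the Weyl alcove $\fA_{n+1}$ by the standard description of conjugacy classes of the compact connected simply connected group $SU_{n+1}$: every element is conjugate into the maximal torus, uniquely up to the affine Weyl group. Composing the bijections yields $\cR_{n+1}^{\simple,c}\cong \fA_{n+1}$.

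I expect the main obstacle to be checking that the resulting assignment is well defined and bijective at the level of classes, rather than any single computation. The subtle point is that a given semisimple class may meet $K$ in more than one element; Lemma \ref{lm:AdGK}, in the form $\cC_k^G\cap K=\cC_k^K$, is exactly what guarantees that all such representatives are $K$-conjugate, so the induced map to $\fA_{n+1}$ is well defined. Injectivity then relies on the cross-section property together with Proposition \ref{pro:S1}, to ensure that distinct elements of $\cR_{n+1}^{\simple,c}$ yield distinct regular, hence distinct semisimple, classes, while surjectivity follows by pulling any $k\in K$ back through Proposition \ref{pro:S1} to the unique regular class whose semisimple part it represents.
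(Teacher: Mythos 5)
Your proposal is correct and follows essentially the same route as the paper's proof: Proposition \ref{pro:S2} to pass from $\cR_{n+1}^{\simple,c}$ to regular classes satisfying (c), Proposition \ref{pro:S1} to pass to semisimple classes, the diagonalizability observation identifying $G^{ss,c}/G$ with $\cC^G_K/G$, and then Lemma \ref{lm:AdGK} to reach $K/K\cong\fA_{n+1}$. The only difference is that you spell out the well-definedness and injectivity/surjectivity checks which the paper leaves implicit.
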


\begin{proof}  The Steinberg maps in Proposition \ref{pro:S1} and \ref{pro:S2} both preserve the eigenvalues, so we may impose condition (c) on all three spaces.  Then we have
\begin{align*}
\cR_{n+1}^{\simple, c} & \cong G^{reg,c}/G \quad\text{by Proposition \ref{pro:S2} }
\\
& \cong  G^{ss,c}/G \quad \text{by Proposition \ref{pro:S1} }
\\
& = \cC^G_K/G
\end{align*}
where $ \cC^G_K=\{ gkg^{-1} \ \vert\ g\in G,k\in K\}$.
(It is clear that $ \cC^G_K\subseteq G^{ss,c}$, and the reverse inclusion holds because a semisimple element of $G=SL_{n+1}\bC$ is diagonalizable.)  Next we have
\begin{align*}
\cC^G_K/G & \cong K/K \quad \text{by Lemma \ref{lm:AdGK} }
\\
& \cong  \fA_K
\end{align*}
where $\fA_K$ is any Weyl alcove of $K=SU_{n+1}$.
\end{proof}

Now we can give a Lie-theoretic description of the monodromy data $\cM_{n+1}^{\text{zero}}$
for the solutions of the tt*-Toda equations that are smooth near zero.

\begin{thm}\label{thm:main}   The map
$\cM_{n+1}^{\text{\em zero}} \to \cR_{n+1}^{\simple,c,p}$ is bijective.  The space
$\cR_{n+1}^{\simple,c,p}$ can be identified in a natural way
with a convex subset $\fA_{n+1}^p$ of
a Weyl alcove $\fA_{n+1}$ of the maximal compact subgroup $SU_{n+1}$ of $SL_{n+1}\bC$.
\end{thm}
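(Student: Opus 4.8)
The plan is to treat the two assertions of the theorem in turn, using the eigenvalue computation of Theorem~\ref{thm:eigenvalues} as the dictionary between solutions and conjugacy classes.

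For the bijectivity of $\cM_{n+1}^{\text{zero}} \to \cR_{n+1}^{\simple,c,p}$, I first note that this map is the natural inclusion: Theorem~\ref{thm:eigenvalues} shows that every $\tMz$ arising from a solution smooth near zero satisfies conditions (c) and (p), so indeed $\cM_{n+1}^{\text{zero}} \subseteq \cR_{n+1}^{\simple,c,p}$. I would prove injectivity and surjectivity through the eigenvalues. Injectivity is immediate: two solutions yielding the same $\tMz$ share the same eigenvalues, hence (by inverting the formulas of Theorem~\ref{thm:eigenvalues}) the same asymptotic data $\gamma_0,\dots,\gamma_n$, and therefore the same solution by Theorem~\ref{thm:convex}. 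For surjectivity I argue backwards: given $X \in \cR_{n+1}^{\simple,c,p}$, conditions (c) and (p) place its eigenvalues on the unit circle in palindromic pairs, and (p) lets me extend the half-set of exponents to a full anti-symmetric tuple $\gamma_0,\dots,\gamma_n$ by inverting Theorem~\ref{thm:eigenvalues}. The essential point is to check the inequalities $\gamma_{i+1}-\gamma_i \geq -2$; granting these, Proposition~\ref{pro:characterizationbygamma}(ii) supplies a solution smooth near zero whose matrix $Y \in \cM_{n+1}^{\text{zero}}$ has the same characteristic polynomial as $X$. Since $X$ and $Y$ both lie in the Steinberg cross-section $\cR_{n+1}^{\simple}$ (Theorem~\ref{thm:m-is-r}), Corollary~\ref{cor:mstructure}(i) forces $Y = X$.

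For the identification with a convex subset of the Weyl alcove, I would begin from Lemma~\ref{lm:alcove}, which identifies $\cR_{n+1}^{\simple,c}$ with an alcove $\fA_{n+1}$ of $SU_{n+1}$. Condition (p)---invariance of the eigenvalues under $x \mapsto 1/x$---is induced on $SU_{n+1}$ by the contragredient automorphism $g \mapsto (g^T)^{-1}$, which for unitary $g$ coincides with complex conjugation $g \mapsto \bar g$. On the maximal torus this automorphism acts as $\phi \mapsto -\phi$ on the angle coordinates; composing with the element of the affine Weyl group that returns $-\phi$ to $\fA_{n+1}$ gives an affine involution $\iota$ of the alcove. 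A class satisfies (p) exactly when it is fixed by this automorphism, so $\fA_{n+1}^p$ is the fixed-point set of $\iota$. Convexity then follows at once, because the fixed-point set of an affine involution is an affine subspace, and its intersection with the simplex $\fA_{n+1}$ is convex.

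The main obstacle I anticipate is the surjectivity step, precisely the verification that $\gamma_{i+1}-\gamma_i \geq -2$ for the data read off an arbitrary $X \in \cR_{n+1}^{\simple,c,p}$. This is exactly where the convex polytope of Theorem~\ref{thm:convex} must be recovered intrinsically from the Lie-theoretic side. In the case $n+1=2m$, Theorem~\ref{thm:eigenvalues} rewrites the inequality $\gamma_{i+1}-\gamma_i \geq -2$ as monotonicity $\theta_i \leq \theta_{i+1}$ of the eigenvalue angles $\theta_i = \tfrac{\pi}{n+1}(\gamma_i + 2i + 1)$, so the content is to show that the alcove normalization of Lemma~\ref{lm:alcove} forces precisely this ordering, matching the walls $\gamma_{i+1}-\gamma_i = -2$ with the collision walls $\theta_i = \theta_{i+1}$ of the alcove. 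Identifying $\fA_{n+1}^p$ with the $\gamma$-region of Theorem~\ref{thm:convex} in this way simultaneously completes surjectivity and exhibits the convex polytope as the fixed-point set $\fA_{n+1}^p$.
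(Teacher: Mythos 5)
Your proposal is correct and follows essentially the same route as the paper: inclusion via Theorem \ref{thm:eigenvalues}, surjectivity by inverting the eigenvalue formulas, invoking Proposition \ref{pro:characterizationbygamma}(ii) and the Steinberg cross-section property (Corollary \ref{cor:mstructure}) to force equality, and identification of $\fA_{n+1}^p$ as the fixed-point locus of the eigenvalue-inversion involution on the alcove. The "obstacle" you flag---matching the walls $\gamma_{i+1}-\gamma_i=-2$ with the alcove walls---is exactly the computation the paper carries out by writing the map $\tMz\mapsto(\varrho_0,\dots,\varrho_n)$ in explicit coordinates, and your sketch of it (monotonicity of eigenvalue angles under the alcove normalization) resolves it correctly.
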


The convex subset $\fA_{n+1}^p$ is obtained by imposing condition (p) on $\fA_{n+1}$. It will be defined explicitly in the proof of the theorem.

\subsection{Proof of Theorem \ref{thm:main} in the case $n+1=2m$}
$  $

Let us choose the maximal torus
\[
T_{n+1}=\{
\diag(e^{\pi\i\varrho_0},\dots,e^{\pi\i\varrho_n})
\ \vert\
\varrho_i\in\bR,
\textstyle \sum_{i=0}^n \varrho_i=0
\}
\]
of $SU_{n+1}$, and the
Weyl alcove
\[
\fA_{n+1}=
\{(\varrho_0,\dots,\varrho_n)\in\bR^{n+1}
\ \vert \
\varrho_m\leq\dots\leq\varrho_{2m-1}\leq\varrho_0\leq \dots \leq\varrho_{m-1}\leq2+\varrho_m,\
\textstyle \sum_{i=0}^n \varrho_i=0
\}
\]
This makes explicit the identification $K/K \cong \fA_{n+1}$. Condition (p)
is now equivalent to $\varrho_i+\varrho_{n-i}=0$.  Imposing this, we obtain
$(K/K)^p \cong \fA_{n+1}^p$.

By Theorem \ref{thm:eigenvalues} and the proof of Lemma \ref{lm:alcove}, we see that the map
\[
\cM_{n+1}^{\text{zero}} \to \cR_{n+1}^{\simple,c} \cong \fA_{n+1}
\]
is given explicitly by
\[
\tMz \mapsto
(\varrho_0,\dots,\varrho_n)=(\tfrac{\gamma_0+1}{n+1}, \tfrac{\gamma_1+3}{n+1},\dots,\tfrac{\gamma_{m-1}+2m-1}{n+1},
\tfrac{1-2m-\gamma_{m-1}}{n+1}, \dots,\tfrac{-1-\gamma_0}{n+1}).
\]
By Proposition \ref{pro:characterizationbygamma},
its image consists of the points of this form which also satisfy $\gamma_{i+1}-\gamma_i\ge -2$.
This is exactly the set $\cR_{n+1}^{\simple,c,p} \cong \fA_{n+1}^p$.

Finally we point out that
the specific convex region which arises from the  tt*-Toda equations
can be expressed as

\noindent
$(\dagger)$\quad\quad\quad\quad
$\{(\gamma_0,\dots,\gamma_n)\in\bR^{n+1}
\ \vert\
 \gamma_{i+1}-\gamma_i\geq -2, \gamma_{n-i}+\gamma_i=0\}$
 \newline
 \quad\quad\quad\quad\quad\quad\quad\quad\makebox[4cm]{}
$=
(n+1)\fA_{n+1}^p -
(1,3,\dots,2m-1,1-2m,\dots,-3,-1)$

\noindent
i.e.\
a rescaling of the convex subset $\fA_{n+1}^p$ followed by
translation by
the sum of the (positive) coroots, where positive refers to the
alcove chosen above.

The rescaling by a factor of $n+1$ corresponds to using the full monodromy
\[
(\tMz\omega^{\frac12})^{n+1}
=
-(\tQz_1\tQz_{1\scriptstyle\frac1{n+1}}\hat\Pi)^{n+1}
\]
of the connection $d+\hat\alpha$, rather than our matrix $\tMz$.  However, it would not have been possible to work with the full monodromy throughout this article, as
$(\tMz\omega^{\frac12})^{n+1}$ is not in general regular,
so the methods of section \ref{steinberg} would not apply.

\subsection{Proof of Theorem \ref{thm:main} in the case $n+1=2m+1$}
$  $

This is very similar to the previous case.  We take the
Weyl alcove
\[
\fA_{n+1}=
\{(\varrho_0,\dots,\varrho_n)\in\bR^{n+1}
\ \vert \
\varrho_0\leq\varrho_1\leq \dots \leq\varrho_{2m}\leq 2+\rho_0,\
\textstyle \sum_{i=0}^n \varrho_i=0
\}.
\]
Condition (p)
is now equivalent to
$\varrho_i+\varrho_{2m-i}=0$ for all $i$.

The map
\[
\cM_{n+1}^{\text{zero}} \to \cR_{n+1}^{\simple,c} \cong \fA_{n+1}
\]
is given explicitly by
\[
\tMz \mapsto
(\varrho_0,\dots,\varrho_n)=(\tfrac{\gamma_0-n}{n+1}, \tfrac{\gamma_1-(n-2)}{n+1},\dots,\tfrac{\gamma_{m-1}-2}{n+1},0,
\tfrac{2-\gamma_{m-1}}{n+1}, \dots,\tfrac{n-\gamma_0}{n+1}).
\]
By Proposition \ref{pro:characterizationbygamma},
its image consists the points of this form which also satisfy $\gamma_{i+1}-\gamma_i\ge -2$.
This is exactly the set $\cR_{n+1}^{\simple,c,p} \cong \fA_{n+1}^p$.

The specific convex region which arises from the  tt*-Toda equations
can be expressed as

\noindent
$(\dagger\dagger)$\quad\quad\quad\quad
$\{(\gamma_0,\dots,\gamma_n)\in\bR^{n+1}
\ \vert\
 \gamma_{i+1}-\gamma_i\geq -2, \gamma_{n-i}+\gamma_i=0\}$
 \newline
\quad\quad\quad\quad\quad\quad\quad\quad\makebox[4cm]{}
$=
(n+1)\fA_{n+1}^p -
(-2m,-2m+2,\dots,-2,0,2,\dots,2m)
$

\noindent
i.e.\
a rescaling of the convex subset $\fA_{n+1}^p$ followed by
translation by the sum of the (positive) coroots, where positive refers to the
alcove chosen above.

\subsection{Conclusion: solutions of the tt*-Toda equations}
$  $

Let us denote by
$
\cS_{n+1}^{\text{global}}
\ \subseteq\
\cS_{n+1}^{\text{zero}}
\ \subseteq\
\cS_{n+1}^{\text{local}}
$
the spaces of solutions $(w_0,\dots,w_n)$ of the $tt^*$-Toda equations (\ref{ost}) which are smooth (respectively) on $\bC^\ast$, on some open set of the form
$\{t\ \vert\ 0<\vert t\vert < \epsilon\}$, or on some open set of the form
$\{t\ \vert\ a<\vert t\vert < b\}$ (where $a$ can be $0$ and $b$ can be $\infty$).

We also have the corresponding spaces
$
\cM_{n+1}^{\text{global}}
\ \subseteq\
\cM_{n+1}^{\text{zero}}
\ \subseteq\
\cM_{n+1}^{\text{local}}
$
of matrices $\tMz$, where the $n\!+\!1$-th power of $\tMz$ is, up to sign, the monodromy (see section \ref{boalch}). These spaces are related to the solution spaces as follows
\[
\begin{CD}
\cS_{n+1}^{\text{global}} @>{\subseteq}>>  \cS_{n+1}^{\text{zero}}  @>{\subseteq}>>  \cS_{n+1}^{\text{local}}
\\
@VVV   @VVV   @VVV
\\
\cM_{n+1}^{\text{global}} @>{\subseteq}>>  \cM_{n+1}^{\text{zero}}  @>{\subseteq}>>  \cM_{n+1}^{\text{local}}
\end{CD}
\]
where each vertical map is the \lq\lq monodromy map\rq\rq\ (and is surjective, by definition).

In this article we do not discuss the map $\cS_{n+1}^{\text{local}} \to \cM_{n+1}^{\text{local}}$.  The map $\cS_{n+1}^{\text{zero}} \to \cM_{n+1}^{\text{zero}}$ is surjective but not bijective; its fibres are given by the possible values of the connection matrix $\tilde E_1$ (which is computed in \cite{GIL3}).  The map $\cS_{n+1}^{\text{global}} \to \cM_{n+1}^{\text{global}}$ is our main interest.

Theorem \ref{thm:main} implies that
the composition
\begin{gather*}
\cS_{n+1}^{\text{zero}} \to \cM_{n+1}^{\text{zero}} \to \cR_{n+1}^{\simple,c}
\overset{\cong}\to
\fA_{n+1}
\\
(w_0,\dots,w_n)\mapsto \text{conjugacy class of semisimple part of $\tMz$}
\end{gather*}
is surjective onto the convex subset $\fA_{n+1}^p$
(because the map $\cM_{n+1}^{\text{zero}} \to \cR_{n+1}^{\simple,c,p}
\overset{\cong}\to \fA_{n+1}^p$ is bijective).

On the other hand, because of $(\dagger)$ and $(\dagger\dagger)$ above,
Theorem \ref{thm:convex} can be interpreted as saying that
the compositions
\[
\cS_{n+1}^{\text{global}}
\hookrightarrow
\cS_{n+1}^{\text{zero}}
\to \cM_{n+1}^{\text{zero}} \to
\cR_{n+1}^{\simple,c,p}
\overset{\cong}\to
\fA_{n+1}^p
\]
and
\[
\cS_{n+1}^{\text{global}} \to
\cM_{n+1}^{\text{global}} \to
\cR_{n+1}^{\simple,c,p}
\overset{\cong}\to
\fA_{n+1}^p
\]
are bijective.   It follows that the maps
\[
\cM_{n+1}^{\text{global}} \hookrightarrow \cM_{n+1}^{\text{zero}}
\]
and
\[
\cS_{n+1}^{\text{global}} \hookrightarrow \cM_{n+1}^{\text{global}}
\]
are both bijective.
In particular we obtain the following Lie-theoretic formulation of Theorem \ref{thm:convex}:

\begin{pro}\label{pro:lietheoretic}
The map
\begin{gather*}
\cS_{n+1}^{\text{\em global}} \overset{\cong}\to \cM_{n+1}^{\text{\em global}} \to \cR_{n+1}^{\simple,c,p}
\overset{\cong}\to \fA_{n+1}^p
\\
(w_0,\dots,w_n)\mapsto \text{conjugacy class of semisimple part of $\tMz$}
\end{gather*}
is bijective.
\end{pro}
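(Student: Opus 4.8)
The plan is to assemble into a single commutative triangle over the alcove $\fA_{n+1}^p$ the bijections already established above, and then to read off the conclusion by a short diagram chase. The three ingredients are: (i) the bijection $\cM_{n+1}^{\text{zero}} \to \cR_{n+1}^{\simple,c,p} \overset{\cong}\to \fA_{n+1}^p$ of Theorem \ref{thm:main}; (ii) the bijection of Theorem \ref{thm:convex} between $\cS_{n+1}^{\text{global}}$ and the convex $\gamma$-region; and (iii) the affine identifications $(\dagger)$ and $(\dagger\dagger)$, which realize that region as $(n+1)\fA_{n+1}^p$ translated by the sum of the positive coroots. Combining (ii) and (iii), the composite $\cS_{n+1}^{\text{global}} \to \cM_{n+1}^{\text{zero}} \to \fA_{n+1}^p$ --- the monodromy map, followed by the inclusion $\cM_{n+1}^{\text{global}} \subseteq \cM_{n+1}^{\text{zero}}$ and the identification of Theorem \ref{thm:main} --- is a bijection onto $\fA_{n+1}^p$.

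The key step I would carry out is to verify that this triangle commutes: that sending a global solution to its boundary data $(\gamma_0,\dots,\gamma_n)$ and then applying $(\dagger)/(\dagger\dagger)$ produces the same point of $\fA_{n+1}^p$ as sending it to the conjugacy class of the semisimple part of $\tMz$. This is exactly the content of Theorem \ref{thm:eigenvalues}: its eigenvalue formulas agree with the explicit expression $\tMz \mapsto (\varrho_0,\dots,\varrho_n)$ in terms of $(\gamma_0,\dots,\gamma_n)$ recorded in the proof of Theorem \ref{thm:main}, and, through the asymptotic law $2w_i \sim \gamma_i\log|t|$, this pins the two maps down as equal.

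With commutativity in hand the remainder is formal. Because the monodromy map $\cS_{n+1}^{\text{global}} \to \cM_{n+1}^{\text{global}}$ is surjective by definition while its composite to $\fA_{n+1}^p$ is a bijection (hence injective), the monodromy map is itself bijective; restricting the bijection $\cM_{n+1}^{\text{zero}} \to \fA_{n+1}^p$ to $\cM_{n+1}^{\text{global}}$ yields a map onto all of $\fA_{n+1}^p$, so that injectivity forces $\cM_{n+1}^{\text{global}} = \cM_{n+1}^{\text{zero}}$; and then $\cM_{n+1}^{\text{global}} \to \cR_{n+1}^{\simple,c,p}$ is the bijection of Theorem \ref{thm:main}. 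The map of the proposition is thus a composition of three bijections. The main obstacle is therefore not this assembly but the commutativity step --- confirming that the analytic parametrization by boundary asymptotics and the algebraic parametrization by the semisimple part of the monodromy select the same point of the Weyl alcove; everything else follows formally from Theorems \ref{thm:convex}, \ref{thm:main}, and \ref{thm:eigenvalues}.
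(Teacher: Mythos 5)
Your proposal is correct and follows essentially the same route as the paper: both combine Theorem \ref{thm:main}, Theorem \ref{thm:convex} together with $(\dagger)/(\dagger\dagger)$, and the explicit $\gamma$-to-$\varrho$ formula coming from Theorem \ref{thm:eigenvalues}, then run the same formal diagram chase (surjectivity of the monodromy map by definition, injectivity from the composite, hence $\cS_{n+1}^{\text{global}}\to\cM_{n+1}^{\text{global}}$ bijective and $\cM_{n+1}^{\text{global}}=\cM_{n+1}^{\text{zero}}$). Your explicit isolation of the commutativity check is exactly what the paper leaves implicit in its phrase that Theorem \ref{thm:convex} ``can be interpreted'' via $(\dagger)$ and $(\dagger\dagger)$, since the identification of the two parametrizations is already built into the proof of Theorem \ref{thm:main} through Theorem \ref{thm:eigenvalues}.
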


Thus,
$\cS_{n+1}^{\text{global}}$ can be regarded
as a cross-section of the monodromy map
$\cS_{n+1}^{\text{zero}} \to \cM_{n+1}^{\text{zero}}$.
Another consequence is the following characterization of
$\cS_{n+1}^{\text{zero}}$.  This is analogous to the characterization of
Proposition \ref{pro:characterizationbygamma} using asymptotic data, but now using monodromy data:

\begin{cor}\label{cor:characterizationbyq}  (i) For any
solution $w_i$ defined on an interval of the form $(0,\epsilon)$, such that $w_i=w_i(|t|)$ and $2w_i\sim \gamma_i\log |t|$ as $z\rightarrow 0$, the matrices $\tQz_k$ are real and the eigenvalues of $\tMz$ are in the unit circle.
(ii) Conversely, if $w_i$ is any local solution such that $w_i=w_i(|t|)$, and such that the matrices $\tQz_k$ are real and the eigenvalues of $\tMz$ are in the unit circle, then $w_i$ arises as in (i).
\end{cor}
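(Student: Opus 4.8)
The plan is to observe first that the two hypotheses of the corollary are together equivalent to the single condition $\tMz\in\cR_{n+1}^{\simple,c,p}$, and then to feed this into Theorem \ref{thm:main}. Indeed, if the $\tQz_k$ are real then $\tMz$, being the product $\tQz_1\tQz_{1\scriptstyle\frac1{n+1}}\hat\Pi$ (resp.\ $\tQz_1\tQz_{1\scriptstyle\frac1{n+1}}\Pi$) of real matrices (Definitions \ref{def:qqpi}, \ref{def:qqpiodd}), is real, so its characteristic polynomial is real and its eigenvalues are closed under complex conjugation; combined with the assumption that they lie on the unit circle, where conjugation coincides with $x\mapsto 1/x$, this is exactly condition (p), while the unit-circle assumption is condition (c). Conversely, (c) and (p) together force the eigenvalues to be closed under conjugation, hence the characteristic polynomial to be real, so $\tMz$ is real by Corollary \ref{cor:mstructure}(ii) and the $\tQz_k$ are real by the argument of Corollary \ref{cor:real}. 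Thus the stated conditions hold precisely when $\tMz\in\cR_{n+1}^{\simple,c,p}$.

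Part (i) is then immediate: a solution smooth near zero has $\tMz\in\cM_{n+1}^{\text{zero}}$ by definition, so Theorem \ref{thm:eigenvalues} places its eigenvalues on the unit circle and Corollary \ref{cor:real} makes all the $\tQz_k$ real.

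For part (ii) I would argue as follows. Any local solution with $w_i=w_i(|t|)$ yields, through the Stokes analysis of section \ref{boalch}, a matrix $\tMz\in\cM_{n+1}$: the support of the Stokes factors $\tQz_k$ (Lemmas \ref{lm:shapeofQeven}, \ref{lm:shapeofQodd}) depends only on the leading term $X=\diag(-1,-\omega,\dots,-\omega^n)$, which is common to all solutions, so $\tMz$ has the form of Definition \ref{def:meven} (resp.\ \ref{def:modd}). By the first paragraph the hypotheses give $\tMz\in\cR_{n+1}^{\simple,c,p}$, and Theorem \ref{thm:main} identifies this set with $\cM_{n+1}^{\text{zero}}$; hence $\tMz\in\cM_{n+1}^{\text{zero}}$. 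What remains is to deduce that the \emph{given} $w_i$, rather than merely some solution sharing this monodromy, is smooth near zero.

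This last deduction is the main obstacle. The monodromy map $\cS_{n+1}^{\text{zero}}\to\cM_{n+1}^{\text{zero}}$ has nontrivial fibres, parametrized by the connection matrix $\tilde E_1$, so $\tMz\in\cM_{n+1}^{\text{zero}}$ by itself only guarantees the \emph{existence} of a smooth-near-zero solution with this $\tMz$. To close the gap I would invoke the (local) Riemann--Hilbert correspondence of \cite{B_adv}\cite{B_duke} together with the local Iwasawa analysis of \cite{GIL3}: the germ of $w_i$ as an isomonodromic deformation is determined by the full data $(\tMz,\tilde E_1)$, but its behaviour as $t\to 0$ is governed entirely by the local monodromy data $\tMz$ at the singular point $\zeta=0$, independently of $\tilde E_1$. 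Granting this, $\tMz\in\cM_{n+1}^{\text{zero}}$ forces the isomonodromic continuation of $w_i$ to extend smoothly to an interval $(0,\epsilon)$, with $2w_i\sim\gamma_i\log|t|$ and the $\gamma_i$ recovered from the eigenvalues of $\tMz$ via Theorem \ref{thm:eigenvalues}; that is, $w_i$ arises as in (i). The crux is precisely this independence of the $t\to 0$ asymptotics from the connection matrix, which is where the substantive input from the isomonodromy theory enters.
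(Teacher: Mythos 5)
Your proof follows the paper's own argument: part (i) is exactly the citation of Theorem \ref{thm:eigenvalues} together with Corollary \ref{cor:real}, and the paper's entire proof of part (ii) consists of the two steps you give --- the hypotheses imply $\tMz\in\cR_{n+1}^{\simple,c,p}$ (your conjugation argument deriving condition (p) from reality of the $\tQz_k$ plus unit-circle eigenvalues fills in a step the paper leaves implicit), after which Theorem \ref{thm:main} gives $\tMz\in\cM_{n+1}^{\text{zero}}$. The \lq\lq main obstacle\rq\rq\ of your final paragraph is not treated in the paper at all: the published proof stops at $\tMz\in\cM_{n+1}^{\text{zero}}$, in effect identifying the conclusion \lq\lq $w_i$ arises as in (i)\rq\rq\ with membership of its Stokes datum in $\cM_{n+1}^{\text{zero}}$. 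Your observation that this by itself only yields \emph{some} smooth-near-zero solution sharing the same $\tMz$ --- the fibre of $\cS_{n+1}^{\text{zero}}\to\cM_{n+1}^{\text{zero}}$ being parametrized by the connection matrix $\tilde E_1$ --- is a legitimate refinement, and your proposed way of closing it (that the $t\to 0$ behaviour of the isomonodromic deformation is governed by the local data $\tMz$ at $\zeta=0$ independently of $\tilde E_1$) is asserted rather than proved; but since the paper supplies nothing at this point either, your proposal takes the same route as the paper and is at least as complete as the paper's own proof.
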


\begin{proof}  (i) Theorem \ref{thm:eigenvalues} and Corollary \ref{cor:real}. (ii)
The hypotheses imply that $\tMz\in\cR_{n+1}^{\simple,c,p}$.  By Theorem \ref{thm:main}, $\tMz\in\cM_{n+1}^{\text{zero}}$.
\end{proof}

\section{Examples}\label{examples}

In this section, we exhibit concrete matrices for the cases $n+1=4,\ 5$ cases, which were the main focus of \cite{GIL1}\cite{GIL2}.  We use the notation of those articles.

\subsection{The case $n+1=4$}
$  $

The singular directions are $\tfrac\pi4\bZ$.  We choose
$\theta_1=-\tfrac\pi4, \theta_{1\scriptstyle\frac 1{4}}=-\tfrac\pi2, \dots$ and find that the roots associated to (a half-period of) singular directions are as shown in Table \ref{t1example4}.

\begin{table}[h]
\renewcommand{\arraystretch}{1.3}
\begin{tabular}{c|c}
$\theta$  & $\cR(\theta)$
\\
\hline
$\theta_1$ & $\alpha_{1,0},\alpha_{2,3}$
\\
$\theta_{1\scriptstyle\frac 1{4}}$  & $\alpha_{1,3}$
\\
$\theta_{1\scriptstyle\frac 1{2}}$  & $\alpha_{0,3},\alpha_{1,2}$
\\
$\theta_{1\scriptstyle\frac 3{4}}$  & $\alpha_{0,2}$
\end{tabular}
\bigskip
\caption{Roots associated to Stokes factors when $n+1=4$}
\label{t1example4}
\end{table}
The fundamental Stokes factors can be written as
\[
\tQz_1=
\begin{pmatrix}
1 & & & \\
-s^\bR_1 & 1 & & \\
 & & 1 & s^\bR_1 \\
  & & & 1
\end{pmatrix},
\ \
\tQz_{1\scriptstyle\frac14}=
\begin{pmatrix}
1 & & & \\
 & 1 & & -s^\bR_2\\
 & & 1 &  \\
  & & & 1
\end{pmatrix}
\]
where $s^\bR_1,s^\bR_2\in\bR$ (see the discussion after Lemma 2.3 and after Theorem 5.5 in \cite{GIL2}).
The characteristic polynomial of the matrix $\tMz=\tQz_1\tQz_{1\scriptstyle\frac14}\hat\Pi$ is $\mu^4 + s^\bR_1 \mu^3 - s^\bR_2 \mu^2 + s^\bR_1 \mu + 1$, which is real, and palindromic. The eigenvalues of $\tMz$
are $e^{\pm\frac{\gamma_0+1}{4}\pi\i}$, $e^{\pm\frac{\gamma_1+3}{4}\pi\i}$ where
$-1\leq \gamma_0\leq 3, -3\leq\gamma_1\leq 1,\gamma_1-\gamma_0\geq-2$ as stated in Theorem \ref{thm:convex}.  It follows that $s^\bR_1,s^\bR_2\in\bR$ are given explicitly by:
\begin{align*}
s_1^\bR &= -2\cos \tfrac\pi4 {\scriptstyle (\gamma_0+1)} -  2\cos \tfrac\pi4 {\scriptstyle(\gamma_1+3)}
\\
s_2^\bR &= -2-4\cos \tfrac\pi4 {\scriptstyle(\gamma_0+1)} \, \cos \tfrac\pi4 {\scriptstyle(\gamma_1+3)}
\end{align*}

The system of positive roots shown in the table corresponds to the choice of simple roots
$\alpha_{1,0},\alpha_{2,3}, \alpha_{0,2}$, and (in the notation of Proposition \ref{pro:simplerootseven}),  to the diagram

\begin{center}
\begin{tabular}{cccc}
$3$ & &$0$
\\
 & $2$ & &$1$
\end{tabular}
\end{center}

The space $\cM_4$ consists of all matrices of the form
\[
\begin{pmatrix}
 & 1& & \\
 -x_{13} & x_{10} & 1&\\
\ \  x_{23} & &  &  1 \\
 -1 & & &
\end{pmatrix}
\]
with $x_{10}, x_{13}, x_{23} \in\bC$. The
characteristic polynomial of this matrix is $\mu^4-x_{10}\mu^3+x_{13}\mu^2-x_{23}\mu+1$.

Let us verify directly that $\cM_4=\cR^\simple_4$, where $\simple$ denotes the simple roots
\[
\alpha_0=\alpha_{1,0},
\quad
\alpha_1=\alpha_{2,3},
\quad
 \alpha_2=\alpha_{0,2}.
\]
Corresponding Weyl group elements can be represented by
\[{\tiny
\n_0=
\left(\begin{array}{cccc}
0&1&0& 0\\
- 1 &0 &0 & 0\\
0 & 0 &1&0 \\
0&0&0&1
\end{array}\right),\quad
\n_1=
\left(\begin{array}{cccc}
1&0& 0&0\\
0 &1&0  & 0\\
 0 & 0 &0&1  \\
 0&0 & -1 &0
\end{array}\right) ,\quad
\n_2=
\left(\begin{array}{cccc}
0&0&-1& 0\\
0& 1 &0 & 0\\
1&  0&0 &0  \\
0 & 0 &0 &1
\end{array}\right),}
\]
and root vectors by
\[{\tiny
E_{\alpha_0}=\left(\begin{array}{cccc}
0&0&0&0\\
1& 0 & 0&0\\
0&  0 & 0&0 \\
0 & 0 &0 &0
\end{array}\right),\quad
E_{\alpha_1}=\left(\begin{array}{cccc}
0 &0&0  & 0\\
0& 0 & 0 &0  \\
0&0&0&1\\
0 & 0 &0&0
\end{array}\right),\quad
E_{\alpha_2}=\left(\begin{array}{cccc}
0&0& 1&0\\
 0 &0  & 0&0\\
 0 & 0 & 0&0 \\
0&0&0&0
\end{array}\right).}
\]
Recall that $e_i(t_i)=\exp(t_iE_{\alpha_i})=I+t_iE_{\alpha_i}$. Then
\[
e_0(x_{10})\n_0e_1(x_{23})\n_1e_2(x_{13})\n_2
=
\begin{pmatrix}
 & \ \ 1& &  \\
\  -x_{13} & \ \ x_{10}  &\ 1&\\
\ \   x_{23} & &  & \ 1 \\
 \  -1 & &  &
\end{pmatrix}
\]
which is the general element of $\cM_4$,
as asserted, and also $\n_0 \n_1 \n_2 =\hat\Pi$.

\subsection{The case $n+1=5$}
$  $

The singular directions are $\tfrac\pi{10}+\tfrac\pi5\bZ$.  We choose
$\theta_1=-\tfrac\pi{10}, \theta_{1\scriptstyle\frac 1{5}}=-\tfrac{3\pi}{10}, \dots$ and find that the roots associated to (a half-period of) singular directions are as shown in Table \ref{t1example5}.

\begin{table}[h]
\renewcommand{\arraystretch}{1.3}
\begin{tabular}{c|c}
$\theta$  & $\cR(\theta)$
\\
\hline
$\theta_1$ & $\alpha_{2,0},\alpha_{3,4}$
\\
$\theta_{1\scriptstyle\frac 1{5}}$  & $\alpha_{1,0},\alpha_{2,4}$
\\
$\theta_{1\scriptstyle\frac 2{5}}$  & $\alpha_{1,4},\alpha_{2,3}$
\\
$\theta_{1\scriptstyle\frac 3{5}}$  & $\alpha_{0,4},\alpha_{1,3}$
\\
$\theta_{1\scriptstyle\frac 4{5}}$  & $\alpha_{0,3},\alpha_{1,2}$
\end{tabular}
\bigskip
\caption{Roots associated to Stokes factors when $n+1=5$}
\label{t1example5}
\end{table}
The fundamental Stokes factors can be written as
\[
\tQz_1=
\begin{pmatrix}
1 & & & &\\
 & 1 & & & \\
s^\bR_2 & & 1 &  & \\
  & & & 1 & -s^\bR_1\\
  & & & & 1
\end{pmatrix},
\ \
\tQz_{1\scriptstyle\frac15}=
\begin{pmatrix}
1 & & & &\\
 s^\bR_1 & 1 & & & \\
 & & 1 &  & -s^\bR_2 \\
  & & & 1 & \\
  & & & & 1
\end{pmatrix}
\]
where $s^\bR_1,s^\bR_2\in\bR$.
The characteristic polynomial of the matrix $\tMz=\tQz_1\tQz_{1\scriptstyle\frac15}\Pi$ is $\mu^5 - s^\bR_1 \mu^4 - s^\bR_2 \mu^3 + s^\bR_2 \mu^2 +
 s^\bR_1 \mu - 1$, which is real and anti-palindromic. The eigenvalues of $\tMz$
are $e^{\pm\frac{\gamma_0-4}{5}\pi\i}$, $e^{\pm\frac{\gamma_1-2}{5}\pi\i},1$ where
$-1\leq \gamma_0\leq 4, -3\leq\gamma_1\leq 2,\gamma_1-\gamma_0\geq-2$.
 It follows that $s^\bR_1,s^\bR_2\in\bR$ are given explicitly by:
\begin{align*}
s_1^\bR &= 1+2\cos \tfrac\pi5 {\scriptstyle(\gamma_0-4)}  + 2\cos \tfrac\pi5 {\scriptstyle(\gamma_1-2)}
\\
s_2^\bR &= -2
-2\cos \tfrac\pi5 {\scriptstyle(\gamma_0-4)}  -  2\cos \tfrac\pi5 {\scriptstyle(\gamma_1-2)}
-4\cos \tfrac\pi5 {\scriptstyle(\gamma_0-4)} \, \cos \tfrac\pi5 {\scriptstyle(\gamma_1-2)}
\end{align*}

The system of positive roots in the table corresponds to the choice of simple roots
$\alpha_{2,0},\alpha_{3,4}, \alpha_{0,3}, \alpha_{1,2}$, and (in the notation of Proposition \ref{pro:simplerootsodd}),  to the diagram

\begin{center}
\begin{tabular}{ccccc}
$4$ & &$0$ & &1
\\
 & $3$ & &$2$ &
\end{tabular}
\end{center}

The space $\cM_5$ consists of all matrices of the form
\[
\begin{pmatrix}
 & \ \ 1& & & \\
 &\ x_{10}& 1 & & \\
\ x_{24} & \ x_{20} & &\ 1&\\
\  x_{34} && &  & \ 1 \\
 \ \ 1 & & & &
\end{pmatrix}
\]
with
$x_{20},x_{34},x_{10},x_{24} \in \bC$.
The
characteristic polynomial of this matrix is
$\mu^5-x_{10}\mu^4-x_{20}\mu^3-x_{24}\mu^2-x_{34}\mu-1$.

Let us verify directly that $\cM_5=\cR^\simple_5$, where $\simple$ denotes the simple roots
\[
\alpha_0=\alpha_{2,0},
\quad
\alpha_1=\alpha_{3,4},
\quad
\alpha_2=\alpha_{0,3}
\quad
\alpha_3=\alpha_{1,2}.
\]
Corresponding Weyl group elements $\n_0,\n_1,\n_2,\n_3$ can be represented (respectively) by

\[{\tiny
\left(\begin{array}{ccccc}
0&0&1&0& 0\\
0& 1 &0 &0 & 0\\
-1& 0 & 0 &0&0  \\
0&0 & 0 &1&0 \\
0&0&0&0&1
\end{array}\right)\!\!,
\
\left(\begin{array}{ccccc}
-1 &0&0&0&0 \\
0&1&0& 0&0\\
0& 0 &1&0  & 0\\
0& 0 & 0 &0&1  \\
0& 0&0 & 1 &0
\end{array}\right)\!\!,
\
\left(\begin{array}{ccccc}
1&0&0&0& 0\\
0&0&1& 0&0\\
0& 1 &0 &0 & 0\\
0& 0 & 0&1 &0  \\
0&0 & 0 &0 &-1
\end{array}\right)\!\!,
\
\left(\begin{array}{ccccc}
0 &0&0&1&0 \\
0&1&0& 0&0\\
0& 0 &1&0  & 0\\
1& 0 & 0 &0&0  \\
0& 0&0 & 0 &-1
\end{array}\right)}
\]

\noindent
and root vectors $E_{\alpha_0},E_{\alpha_1},E_{\alpha_2},E_{\alpha_3}$ (respectively) by

\[{\tiny
\left(\begin{array}{ccccc}
0&0&0& 0&0\\
0&0&0&0&0\\
1& 0 &0  & 0&0\\
0& 0 & 0 & 0&0 \\
0&0 & 0 &0 &0
\end{array}\right)\!\!,
\
\left(\begin{array}{ccccc}
0&0& 0&0& 0\\
0& 0 &0&0  & 0\\
0&0& 0 & 0 &0  \\
0&0&0&0&1\\
0&0 & 0 &0&0
\end{array}\right)\!\!,
\
\left(\begin{array}{ccccc}
0&0&0& 1&0\\
0& 0 &0  & 0&0\\
0& 0 & 0 & 0&0 \\
0&0 & 0 &0 &0\\
0&0&0&0&0
\end{array}\right)\!\!,
\
\left(\begin{array}{ccccc}
0&0&0&0&0\\
0&0& 1& 0&0\\
0& 0 &0  & 0&0\\
0& 0 & 0 &0  &0\\
0&0 & 0 &0&0
\end{array}\right)\!.}
\]

We obtain
\[
e_0(x_{20})\n_0e_1(x_{34})\n_1e_2(x_{10})\n_2e_3(x_{24})\n_3
=
\begin{pmatrix}
 & \ \ 1& & & \\
 & \ \ x_{10}& 1 & & \\
\ \  x_{24} & \ \ x_{20}  & &\ 1&\\
\ \   x_{34} && &  & \ 1 \\
 \ \ 1 & & & &
\end{pmatrix}
\]
which is the general element of $\cM_5$,
as asserted, and also $\n_0 \n_1 \n_2 \n_3 =\Pi$.

\section{Proof of Theorem \ref{thm:eigenvalues}}\label{app}

For the cases $n+1=4,5$ the proof can be extracted from section 4 of \cite{GIL1} and section 5 of \cite{GIL2}. For the case $n+1=4$ another proof is given in section 3 of \cite{GIL3}, and we shall show how to extend that proof to the case of any $n$.   Another proof for any $n$ can be extracted from \cite{M1}\cite{M2}.

The following proof is based on the consideration of a meromorphic connection
\[
\hat\omega=
\left(
-\tfrac{n+1}N \tfrac{z}{\lambda^2}
\ \eta
+\tfrac1\lambda
\ m
\right)
d\lambda
\]
where
\[
\eta=
\bp
 & & & p_0\\
 p_1 & & & \\
  & \ddots & & \\
   & & p_n &
\ep,
\quad
m=
\bp
m_0 & & \\
 & \ddots & \\
  & & m_n
  \ep.
\]
Here $z$ is a complex parameter and
$p_i=c_i z^{k_i}$, where $c_i>0, k_i\ge -1$, and
$N=n+1+\sum_{i=0}^\infty k_i$.  The real numbers $m_i$ are defined in terms of the $k_i$ by
\begin{align*}
\tfrac{n+1}N(k_0+1)&=1+m_n-m_0\\
\tfrac{n+1}N(k_1+1)&=1+m_0-m_1\\
&\dots \\
\tfrac{n+1}N(k_n+1)&=1+m_{n-1}-m_n\\
\end{align*}
together with the conditions $m_i+m_{n-i}=0$.

It is explained in section 2 of \cite{GIL3} how the
connection $\hat\omega$ is related to the  connection $\hat\alpha$ by a loop group Iwasawa factorization argument and the variable transformations
$t=\tfrac{n+1}N \ c^{\frac1{n+1}} \, z^{\frac N{n+1}}$, $\zeta=\lambda/t$,
where $c=c_0\dots c_n$.
It follows from this that
\begin{equation}\label{mandgamma}
m_i=-\tfrac12 \gamma_i.
\end{equation}

The leading terms
$-\frac {n+1}N\frac{z}{\lambda^2}\eta\, d\lambda$ of $\hat\omega$
and
$-\frac 1{\zeta^2} W^T d\zeta$ of $\hat\alpha$
are
conjugate, and the Stokes analysis of $\hat\omega$ can be carried out in exactly the same way as that of $\hat\alpha$, using the same Stokes sectors. This gives rise to a canonical solution $\Phiz_k$ of the system
\begin{equation}\label{meromorphicsystem2}
\Phi_\lambda =
\left(
-\tfrac{n+1}N \tfrac{z}{\lambda^2}
\ \eta^T
+\tfrac1\lambda
\ m
\right)\Phi
\end{equation}
on  the sector $\Omz_k$.

Furthermore, $\hat\omega$ satisfies

\noindent{\em Cyclic symmetry: }  $\tau(\hat\omega(\lambda))
=\hat\omega(\omega \lambda)$

\noindent{\em Anti-symmetry: }  $\sigma(\hat\omega(\lambda))=\hat\omega(-\lambda)$

\noindent (as for $\hat\alpha$, but excluding reality).
From the cyclic symmetry (Lemma 3.1 of \cite{GIL3}) we obtain
\begin{equation}\label{phizerocyclic}
d_{n+1}^{-1}\Phiz_{k}(\omega\lambda)\Pi^{-1}=\Phiz_{k\scriptstyle\frac2{n+1}}(\lambda)
= \Phiz_{k}(\lambda) \Qz_k\Qz_{k\scriptstyle\frac1{n+1}}.
\end{equation}

At $\lambda=\infty$, the system (\ref{meromorphicsystem2}) has a regular singular point, and thus a canonical solution $\Phii$ (in the sense of Theorem 1.2 of \cite{FIKN06}).  The cyclic symmetry (Lemma 3.4 of \cite{GIL3}) is
\begin{equation}\label{phiinfinitycyclic}
d_{n+1}^{-1}\Phii(\omega\lambda)d_{n+1}\omega^{-m}=\Phii(\lambda).
\end{equation}

The solutions $\Phii,\Phiz_k$ must satisfy $\Phii=\Phiz_k D_k$ for some invertible (constant) matrix $D_k$.  Comparison of (\ref{phizerocyclic}) and
(\ref{phiinfinitycyclic}) now gives:
\[
\Qz_1\Qz_{1\scriptstyle\frac1{n+1}}\Pi = D_k d_{n+1}^{-1} \omega^m D_k^{-1}
\]
from which we see that the eigenvalues of $\Qz_1\Qz_{1+\scriptstyle\frac1{n+1}}\Pi$
are the same as those of the diagonal matrix $d_{n+1}^{-1} \omega^m$.

Strictly speaking, this argument holds only when the regular singularity is non-resonant.  In the resonant case $d_{n+1}^{-1} \omega^m$ must be replaced by
$d_{n+1}^{-1} \omega^m \omega^M$ where $M$ is nilpotent (section 6 of \cite{GIL3}).  However this does not change the eigenvalues.

It remains to convert from the $\Qz_k$ of \cite{GIL3} to the  $\tQz_k$ of \cite{GIL2}. The relation is
\[
\tQz_k = d_{(0)}^{-1} \Qz_k d_{(0)}
\]
where
\[
d_{(0)}=
\begin{cases}
\diag(1,\omega^{\frac12},\omega^{1},\omega^{\frac32},\dots,\omega^{\frac{n+1}2})
\quad\text{if $n+1=2m$}
\\
\diag(1,\omega^{m+1},\omega^{2(m+1)},\dots,\omega^{n(m+1)})
\quad\text{if $n+1=2m+1$}
\end{cases}
\]
From this we obtain
\begin{align*}
\tQz_1 \tQz_{1\scriptstyle\frac1{n+1}}\hat\Pi
&=\omega^{-\frac12} \Qz_1 \Qz_{1\scriptstyle\frac1{n+1}}\Pi
\quad\text{if $n+1=2m$}
\\
\tQz_1 \tQz_{1\scriptstyle\frac1{n+1}}\Pi
&=
\omega^{-(m+1)} \Qz_1 \Qz_{1\scriptstyle\frac1{n+1}}\Pi
\quad\text{if $n+1=2m+1$}
\end{align*}
This gives the required conclusion.  Namely, if $n+1=2m$, the eigenvalues of $\tQz_1 \tQz_{1\scriptstyle\frac1{n+1}}\hat\Pi$ are those of the diagonal matrix
$\omega^{-\frac12}d_{n+1}^{-1} \omega^m$, and, $n+1=2m+1$, the eigenvalues of
$\tQz_1 \tQz_{1\scriptstyle\frac1{n+1}}\Pi$ are those of the diagonal matrix
$\omega^{-(m+1)} d_{n+1}^{-1} \omega^m$. The expressions involving the $\gamma_i$ in
Theorem \ref{thm:eigenvalues} are obtained by using (\ref{mandgamma}).

\end{document}